\renewcommand{\comment}[1]{}
\newcommand{\eq}{\begin{equation}}
\newcommand{\en}{\end{equation}}
\newcommand{\rr}{\mathbb{R}}
\newcommand{\NN}{\mathbb{N}}
\newcommand{\norm}[1]{\left\lVert #1 \right\rVert}
\newcommand{\abs}[1]{\left\lvert #1 \right\rvert}
\newcommand{\mcal}[1]{\mathcal{#1}}
\newcommand{\iprod}[1]{\left\langle #1 \right\rangle }
\newcommand{\tbf}{\textbf}
\renewcommand{\P}{\mathrm{P}}
\newcommand{\E}{\mathrm{E}}
\newcommand{\Var}{\mathrm{Var}}
\newcommand{\simp}{\Delta}
\newcommand{\Hess}{\mathrm{Hess}}
\newcommand{\Diri}{\text{Dirichlet}}
\newcommand{\tmin}{\varrho}
\newcommand{\Beta}{\mathrm{Beta}}
\newcommand{\floor}[1]{\left\lfloor #1\right\rfloor}
\newcommand{\psp}{\mathcal{M}}
\newcommand{\wt}[1]{\widetilde{#1}}
\newcommand{\WF}{\text{WF}}
\begin{document}

\theoremstyle{plain}
\newtheorem{thm}{Theorem}
\newtheorem{lemma}[thm]{Lemma}
\newtheorem{prop}[thm]{Proposition}
\newtheorem{cor}[thm]{Corollary}

\theoremstyle{definition}
\newtheorem{defn}{Definition}
\newtheorem{asmp}{Assumption}
\newtheorem{notn}{Notation}
\newtheorem{prb}{Problem}

\theoremstyle{remark}
\newtheorem{rmk}{Remark}
\newtheorem{exm}{Example}
\newtheorem{clm}{Claim}

\title[High dimensional portfolios]{Exponentially concave functions and high dimensional stochastic portfolio theory}

\author{Soumik Pal}
\address{Department of Mathematics\\ University of Washington\\ Seattle, WA 98195}
\email{soumikpal@gmail.com}

\keywords{Stochastic portfolio theory; relative arbitrage; short term arbitrage; exponentially concave functions; high-dimensional finance; Dirichlet concentration} 

\subjclass[2000]{60J60; 60J70; 60J35; 91B28}

\thanks{This research is partially supported by NSF grant DMS-1308340}

\date{\today}

\begin{abstract} We consider the following problem in stochastic portfolio theory. Are there portfolios that are relative arbitrages with respect to the market portfolio over very short periods of time under realistic assumptions? We answer a slightly relaxed question affirmative in the following high dimensional sense, where dimension refers to the number of stocks being traded. Very roughly, suppose that for every dimension we have a continuous semimartingale market such that (i) the vector of market weights in decreasing order has a stationary regularly varying tail with an index between $-1$ and $-1/2$ and (ii) zero is not a limit point of the relative volatilities of the stocks. Then, given a probability $\eta < 1$ arbitrarily close to one, two arbitrarily small $\epsilon, \delta >0$, and an arbitrarily high positive amount $M$, for all high enough dimensions, it is possible to construct a functionally generated portfolio such that, with probability at least $\eta$, its relative value with respect to the market at time $\delta$ is at least $M$, and never goes below $(1-\epsilon)$ during $[0, \delta]$. There are two phase transitions; if the index of the tail is less than $-1$ or larger than $-1/2$. The construction uses properties of regular variation, high-dimensional convex geometry and concentration of measure under Dirichlet distributions. We crucially use the notion of $(K,N)$ convex functions introduced by Erbar, Kuwada, Sturm \cite{EKS} in the context of curvature-dimension conditions and Bochner's inequalities.  
\end{abstract}

\maketitle

\section{Introduction} We start by recalling the usual set-up of stochastic portfolio theory. See the survey \cite{FKsurvey} for more details. Although we will use words such as \textit{portfolio}, \textit{market}, and \textit{stocks} for alluding to their economic interpretation, no knowledge of these quantities are required for understanding the mathematical problem that follows. At a purely mathematical level we will be studying universal properties of stochastic integrals in high dimensions. However, the economic terminology will give us a real world interpretation of an interesting high-dimensional phenomenon. 

Our state space is the open unit simplex in ${\Bbb R}^n$, $n\ge 2$, defined by 
\eq\label{eq:unitsimp}
\simp^{(n)}:= \left\{  (p_1, p_2, \ldots, p_n):\; p_i > 0 \; \text{for all $i$ and}\; \sum_{i=1}^n p_i=1    \right\}.
\en 
The dimension $n$ represents an equity market that has $n$ stocks. We consider time to be continuous. At any point of time $t$, let $X_i(t) > 0$ be the market capitalization of stock $i$ (i.e., the total dollar amount raised through the shares) at time $t$. Our primary focus is the market weight of stock $i$, defined by
\begin{equation} \label{eqn:marketweight}
\mu^{(n)}_i(t) = \frac{X_i(t)}{X_1(t) + \cdots + X_n(t)}, \quad i = 1, \ldots, n.
\end{equation}
The vector $\mu^{(n)}(t):=\left( \mu_1(t), \ldots, \mu_n(t) \right)$ is called the \textit{market} at time $t$. As time varies we have a function $\left( \mu^{(n)}(t),\; t\ge 0  \right)$ on $\Delta^{(n)}$. We will throughout assume a model under which $\mu^{(n)}(\cdot)$ is a continuous semimartingale. 

At this point we will impose an extra structure over the usual set-up. We will assume that there exists a sequence of semimartingales $\left\{ \mu^{(n)}(\cdot),\; n \in \NN  \right\}$. That is, we have a market for every dimension. It is possible to have them on different probability spaces, but for notational convenience we will assume that they are all represented on a single filtered probability space $\left( \psp,\mathcal{F}= \left\{ \mathcal{F}_t,\; t\ge 0 \right\}, \P  \right)$. This can be done without loss of generality. For $n=1$, we have a trivial situation. To avoid constantly make exception to this case, we will let $\NN$ refer to the set $\{2,3,4,\ldots\}$. 
For any $n\in \NN$, we define a portfolio in dimension $n$ to be an $\mathcal{F}$-predictable process $\left(\pi^{(n)}(t),\; t \ge 0\right)$ with state space $\overline{\simp^{(n)}}$, the closure of $\simp^{(n)}$. In finance, this represents a self-financing long-only portfolio. The $i$th coordinate $\pi^{(n)}_i(t)$ of $\pi^{(n)}$ represents the fraction of its current value invested in stock $i$. Here, the value or the wealth process  of the portfolio refers to the growth (or decay) of $\$ 1$ invested in the portfolio at time zero.   

The only class of portfolios we will be interested in is one where we have a function $\Pi^{(n)}:\simp^{(n)} \rightarrow \overline{\simp^{(n)}}$, where the range is the closed unit simplex. In other words, we define the following adapted process $\pi^{(n)}(t)=\Pi^{(n)}\left(\mu^{(n)}(t)  \right)$ by applying the portfolio function on the current market weights. This gives us a portfolio $\left(  \pi^{(n)}(t),\; t\ge 0 \right)$. For example, if we take the identity map $\pi^{(n)}(t)\equiv \mu^{(n)}(t)$, then this portfolio is called the market portfolio. Its value can be thought of as a capitalization-weighted index (such as S\&P 500 or Russell 1000) which represents the performance of the entire market.

What we are interested is the relative value of a portfolio with respect to the market portfolio, defined as the ratio of the value of the portfolio $\pi^{(n)}$ over that of the market portfolio. We will denote this relative value process by $\left(V_n(t),\; t\ge 0\right)$ where $V_n(0)\equiv 1$. This process can be written as an exponential stochastic integral of the following form:
\[
V_n(t)= \mathcal{E}\left(  \int_0^t \sum_{i=1}^n \frac{\pi^{(n)}_i(u)}{\mu^{(n)}_i(u)} d \mu^{(n)}_i(u) \right), 
\]
where $\mathcal{E}(L_t)$ for a continuous semimartingale $L$ is the process $\exp\left( L_t - \iprod{L}_t/2  \right)$.

We now recall the definition of relative arbitrage from \cite[page 113]{FKsurvey}. A portfolio $\pi^{(n)}$ is said to be a relative arbitrage (or, more colloquially, \textit{beats the market}) if there is a time $T_n>0$ and a $q >0$ such that 
\[
\P\left(  V_n(T_n) \ge 1, \; \inf_{0\le t \le T_n} V_n(t) \ge q \right)=1, \quad \P\left( V_n(T_n) > 1  \right)>0. 
\]  
We will now introduce a new definition by relaxing the above requirement. 

\begin{defn}\label{ref:shortterm} (Asymptotic short term relative arbitrage) We say that a sequence $\left( \pi^{(n)},\; n \in \NN \right)$ is an asymptotic short term relative arbitrage (ASTRA) opportunity if there are two sequences of positive numbers $\left(T_n,\; n \in \NN \right)$ and $\left( M_n,\; n\in \NN  \right)$ such that
\begin{enumerate}[(i)]
\item $\lim_{n\rightarrow \infty} T_n=0$ and $\lim_{n\rightarrow \infty} M_n=\infty$. 
\item $\exists\; q>0$ such that $\P\left( \inf_{0\le t \le T_n} V_n(t) \ge q \right)=1$ for all $n\in \NN$.
\item Moreover, we have $\lim_{n\rightarrow \infty} \P\left( V_n(T_n) \ge M_n \right)=1$. 
\end{enumerate}
\end{defn}

The definition of ASTRA is similar to that of asymptotic arbitrage introduced in \cite{KK94} and studied further in \cite{KS96}, \cite{KK98}, \cite{Kl00}, and \cite{CKT15}.

\bigskip

Our main result shows the existence of such ASTRA opportunities under the following assumptions. A more general version involving regularly varying sequences appear as Theorem \ref{thm:mainprob}. Fix $\alpha\in [1/2,1]$. 
Suppose $\nu^{(n)}\in \Delta^{(n)}$ is given by the coordinates
\[
\nu^{(n)}_i = \frac{i^{-\alpha}}{H^{(\alpha)}_n}, \qquad i=1,2,\ldots, n, \quad \text{where}\quad H^{(\alpha)}_n=\sum_{j=1}^n j^{-\alpha}. 
\]
Plainly, $\nu^{(n)}$ has an asymptotic Pareto or Zipf distribution tail with slope $-\alpha$. For a definition of Pareto or Zipf distribution see \cite{zipfbook}. Let $\Diri(\gamma)$ refer to the Dirichlet probability distribution on $\simp^{(n)}$ with parameter $\gamma \in (0, \infty)^n$.

\begin{asmp}\label{asmp:mainasmp} We assume that there exists a positive sequence $\left(\delta_n,\; n\in \NN \right)$ such that
\[
 \lim_{n\rightarrow \infty} \delta_n=0,\quad \text{but} \quad \lim_{n\rightarrow\infty} \delta_n \sqrt{\log n}=\infty. 
\]
and the following conditions hold. 
\begin{enumerate}[(i)]
\item (\textbf{Initial Dirichlet distribution of norm}) Consider the process $Y(t):=\sqrt{n}\norm{\mu^{(n)}(t) - \nu^{(n)}}$. We assume that $Y(0)$ has the same distribution as when $\mu^{(n)}(0)\sim \Diri\left( n\nu^{(n)}  \right)$. Since the labeling of coordinates is arbitrary, the above statement demands some labeling of coordinate for which it is true.

\item (\textbf{Escape times are not too fast}) Define $M_0:= \sqrt{n}\E \norm{\mu^{(n)} - \nu^{(n)}}$, where $\mu^{(n)}\sim \Diri\left( n\nu^{(n)}  \right)$. It has been proved later that $M_0 < 1$. We assume that for all $1< b_1 < b_2 < \pi/2$, the probability 
\[
\begin{split}
q_n(b_1, b_2)&:=\P\left( \sup_{0\le t \le \delta_n}  Y(t) > b_2 \mid  Y(0) \le b_1\right) \; \text{satisfies}\\
\lim_{n\rightarrow \infty} & q_n(b_1, b_2)=0.  
\end{split}
\]

\item (\textbf{Volatility bounded away from zero}) We will assume that there is a positive constant $\tmin$ such that, for all $n=2,3,\ldots$ and all $1\le i \le n$, we have
\eq\label{eq:volbnd}
\int_0^t \sum_{i=1}^n d\iprod{\mu_i^{(n)}(s), \mu_i^{(n)}(s)} \ge \tmin \int_0^t \left[  \sum_{i=1}^n \left( \mu_i^{(n)}(s) \right)^2 \right]ds, \quad 0\le t \le \delta_n. 
\en 
Here $\iprod{\cdot, \cdot}$ refers to the mutual or quadratic variations of two semimartingales. 
\end{enumerate}
\end{asmp}

In Section \ref{sec:analysis} we show that the stationary Wright-Fisher diffusion model with invariant distribution $\Diri\left( n\nu^{(n)} \right)$ or its deterministic time changes that runs times more slowly satisfy all the assumptions above. In stochastic portfolio theory, the Wright-Fisher model appears as the law of the market weights under the volatility-stabilized market models. See \cite{FK05, PalVSM}. 

The statements of our main results are slightly different depending on whether $\alpha \in [1/2,1)$ or whether $\alpha=1$. We will call the former case as the subcritical case, and the latter as the critical case. Define 
\[
R_n = {H_n^{(2\alpha)}}/{\left(H_n^{(\alpha)}\right)^2}= \frac{\sum_{i=1}^n i^{-2\alpha}}{\left( \sum_{i=1}^n i^{-\alpha}  \right)^2}.
\]

\begin{thm}\label{thm:mainproblimited} Suppose we are given an $\epsilon \in (0,1)$. There exists a sequence of explicit portfolios $\left(\pi^{(n)},\; n \in \NN\right)$ and numbers $1< b_1 < b_2 <\pi/2$, independent of $\alpha$, such that the following hold under Assumption \ref{asmp:mainasmp}. 
\begin{enumerate}[(i)]
\item Almost surely, $\inf_{0\le t \le \delta_n} V_n(t) \ge (1-\epsilon)$ for every $n$.
\item In the subcritical case, there exists a sequence $\left( g_n, \; n \in \NN \right)$ satisfying $g_n \ge c_1 n R_n (\log n)^{-1/2}$, for some positive constant $c_1$ and such that 
\[
\P\left(  V_n(\delta_n) \ge \exp\left( g_n \right)  \right) = 1 - q_n(b_1, b_2) - O\left(\exp\left(-c_0 n^{(1-\alpha)/4}\right)\right),
\]
for some positive constant $c_0$.
\item In the critical case, there exists a sequence $\left( h_n,\; n \in \NN \right)$ which satisfies $h_n \ge c_2 n (\log n)^{-3/2}$ for some positive constant $c_2$, such that 
\[
\P\left(  V_n(\delta_n) \ge \exp\left( h_n \right)  \right) = 1- q_n(b_1, b_2) - O(R_n). 
\]
\end{enumerate}
\end{thm}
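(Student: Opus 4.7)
The plan is to exploit the framework of functionally generated portfolios of Fernholz, refined through the $(K,N)$-convexity machinery of Erbar-Kuwada-Sturm to obtain dimension-dependent lower bounds on the drift. First I would construct an explicit sequence of exponentially concave generators $\Phi_n : \overline{\simp^{(n)}} \to (0, \infty)$ concentrated around the typical market configuration $\nu^{(n)}$. A natural candidate uses the Fisher-information (spherical) geometry on the simplex via the square-root embedding $\mu \mapsto \sqrt{\mu} \in \sph^{n-1}$: for instance,
\[
\Phi_n(\mu) = \exp\bigl( - A_n\, f(\rho(\sqrt{\mu}, \sqrt{\nu^{(n)}})) \bigr)
\]
for a carefully chosen convex $f$ and dimension-dependent scale $A_n$. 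The threshold $\pi/2$ in Assumption \ref{asmp:mainasmp}(ii) is precisely the geodesic convexity radius of hemispherical caps, where $(K,N)$-convexity constants can be controlled uniformly in $n$.

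With $\Phi_n$ fixed, I would invoke the master formula for the associated functionally generated portfolio,
\[
\log V_n(t) = \log \frac{\Phi_n(\mu^{(n)}(t))}{\Phi_n(\mu^{(n)}(0))} + \int_0^t \Theta_n(s)\, ds,
\]
where $\Theta_n(s) \geq 0$ is the excess growth rate, expressible through $\Hess(-\log \Phi_n)$ paired with the infinitesimal covariation of $\mu^{(n)}(s)$. The $(K,N)$-convexity of $-\log \Phi_n$ together with the volatility lower bound in Assumption \ref{asmp:mainasmp}(iii) yields a pointwise estimate $\Theta_n(s) \geq c(A_n, n)\, \tmin \sum_i (\mu^{(n)}_i(s))^2$.

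Part (i) then reduces to choosing $A_n$ and $f$ so that $\log(\Phi_n(\mu)/\Phi_n(\mu'))$ stays above $\log(1-\epsilon)$ whenever $\sqrt{n}\norm{\mu-\nu^{(n)}}, \sqrt{n}\norm{\mu'-\nu^{(n)}} \leq b_2$; since $\Theta_n \geq 0$, this guarantees $V_n(t) \geq 1-\epsilon$ throughout $[0, \delta_n]$ on the event from Assumption \ref{asmp:mainasmp}(ii), and a smooth cutoff or boundary-stopping argument upgrades this to the almost sure statement. For parts (ii) and (iii), on the same good event the weights stay near $\nu^{(n)}$, where $\sum_i (\mu^{(n)}_i(t))^2$ is comparable to $R_n$. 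Integrating the pointwise drift over $[0, \delta_n]$ and optimizing the trade-off between the boundedness constraint from part (i) and the growth requirement in parts (ii)--(iii) produces $g_n$ at least of order $n R_n (\log n)^{-1/2}$ in the subcritical regime and $h_n$ at least of order $n (\log n)^{-3/2}$ in the critical regime; the additional logarithmic factor in the latter reflects that there $R_n \sim c/(\log n)^2$, forcing a smaller admissible $A_n$. The probability bounds then combine the $q_n(b_1, b_2)$ contribution from Assumption \ref{asmp:mainasmp}(ii) with Dirichlet concentration of $Y(0)$: sub-Gaussian concentration of $\Diri(n\nu^{(n)})$ supplies the $O(\exp(-c_0 n^{(1-\alpha)/4}))$ term in the subcritical case, while a direct Chebyshev estimate (sharp for the Dirichlet second moment) yields $O(R_n)$ in the critical case.

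The hardest step will be calibrating $A_n$ and the concavity profile of $\Phi_n$ so that the boundedness requirement of (i) and the drift growth of (ii)--(iii) can be met simultaneously at the claimed rates. The clean appearance of the $(\log n)^{-1/2}$ versus $(\log n)^{-3/2}$ factors should ultimately come from the $(K,N)$-convex refinement of Bochner's inequality with effective dimension $N$ matched to $n$, combined with high-dimensional concentration estimates giving the natural scale of $\norm{\mu^{(n)} - \nu^{(n)}}$ under $\Diri(n\nu^{(n)})$.
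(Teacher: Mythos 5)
Your high-level plan agrees with the paper in its essential components: use a functionally generated portfolio whose generator is a cosine-type exponentially concave function tightly concentrated near $\nu^{(n)}$; apply Fernholz's decomposition formula; lower-bound the drift integrand by combining $(K,N)$ exponential concavity with the volatility floor from Assumption~\ref{asmp:mainasmp}(iii) and the observation that $\sum_i(\mu_i^{(n)})^2\approx R_n$ near $\nu^{(n)}$; control the probability by Dirichlet concentration of $Y(0)$ (sub-Gaussian in the subcritical case, Chebyshev giving $O(R_n)$ in the critical case) together with the escape estimate $q_n(b_1,b_2)$; and use a boundary-stopping switch to the market portfolio to get the almost-sure floor in part~(i). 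All of that matches.

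The genuine divergence is in the construction of $\Phi_n$. You propose $\Phi_n(\mu)=\exp\!\left(-A_n\,f\!\left(\rho(\sqrt{\mu},\sqrt{\nu^{(n)}})\right)\right)$ built on the Fisher--Rao geodesic distance via the square-root embedding, and you read the $\pi/2$ threshold in Assumption~\ref{asmp:mainasmp}(ii) as a hemispherical convexity radius. The paper instead works purely Euclidean: the generator is the cosine portfolio $\varphi(x)=\log\cos\!\left(c\sqrt{n}\|x-\nu^{(n)}\|\right)$ on the Euclidean ball $\sqrt{n}\|x-\nu^{(n)}\|<\pi/(2c)$, and the $\pi/2$ is simply the endpoint of the domain of $\log\cos$ --- Assumption~\ref{asmp:mainasmp}(ii) is stated for $Y(t)=\sqrt{n}\|\mu^{(n)}(t)-\nu^{(n)}\|$, the \emph{Euclidean} distance, not a Fisher--Rao geodesic. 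This difference matters for two reasons. First, the $(n,1)$ exponential concavity of the Euclidean cosine generator is verified by a one-line Hessian/Cauchy--Schwarz computation (Lemma~\ref{lem:expcnv}), which then feeds cleanly into Lemma~\ref{lem:drift size} via the Euclidean volatility bound \eqref{eq:volbnd}. Your Fisher--Rao generator would require checking concavity of $\exp(-A_n f(\rho(\sqrt{\mu},\cdot)))$ in the $\mu$-coordinates after pulling back through the square-root map; the Hessian there acquires $1/\sqrt{\mu_i}$ weights, so the uniform $(n,1)$ curvature bound that drives the drift estimate is no longer automatic and may fail near the simplex boundary. Second, the compatibility between your generator's domain and the event $\{\sup_{t\le\delta_n}Y(t)\le b_2\}$ from Assumption~\ref{asmp:mainasmp}(ii) is left unaddressed: chord and geodesic distances are comparable only up to constants, and a mismatch here would alter $b_1,b_2$ and the drift lower bound. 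To make your route rigorous you would need an analogue of Lemma~\ref{lem:expcnv} in the Fisher--Rao geometry and a translation of Assumptions~\ref{asmp:mainasmp}(ii)--(iii) into that geometry; the paper's Euclidean choice avoids both issues.
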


It can be easily verified (will be shown later) that $\lim_{n\rightarrow\infty} g_n=\infty$. Thus, $\left( \pi^{(n)},\; n\in \NN  \right)$ is an ASTRA opportunity under our assumptions. Every portfolio $\pi^{(n)}$ is essentially a functionally generated portfolio and therefore explicit. Functionally generated portfolios are given as functions $\pi^{(n)}(t)=\Pi^{(n)}\left( \mu^{(n)}(t) \right)$, where $\Pi^{(n)}$ can be described as a gradient map of the logarithm of a positive concave function on the unit simplex. We recall the notion in Section \ref{sec:KNexp}. 
Also see \cite{PW14} for its connection to optimal transport and the geometry of the unit simplex.

The idea of the construction is the following. In every dimension $n$ we construct a positive concave function around $\nu^{(n)}$ that is \textit{highly concave} in every direction in a certain sense. However, this requires the diameter of the domain of the function to be $O(1/\sqrt{n})$. Although the diameter of the unit simplex is $\sqrt{2}$ in every dimension, its `typical diameter' under $\Diri\left( n\nu^{(n)} \right)$ is about $1/\sqrt{n}$ for large $n$. Thus, such a concave function can be effectively constructed on a large enough subset of $\simp^{(n)}$. By our assumption, the process $\mu^{(n)}$ spends a significant amount of time before its first exit from this large subset with a high probability. These facts, along with the increasing difference between the $\mathbf{L}^2$ and $\mathbf{L}^1$ norms of regularly varying sequences, gives the portfolio a very large relative value.

The proofs break down if either $\alpha < 1/2$ or $\alpha > 1$. The latter case is understandable. The extreme inequality in the market weights effectively reduces the dimensionality of the problem. Since most volatility is generated by the largest market weights, increasing dimension gives us little advantage. The case of $\alpha < 1/2$ is more mysterious. This is the case of too little inequality and is probably related to the fact (see \cite[Theorem 1]{W14}) that the equal-weighted portfolio ($\pi^{(n)}_i(t)\equiv 1/n$) is optimal in a certain sense around a neighborhood of the barycenter $(1/n, 1/n, \ldots, 1/n)$ of the unit simplex $\simp^{(n)}$. It will be interesting to understand this transition better. 

\begin{figure}[t]
\centering
\includegraphics[scale=0.6]{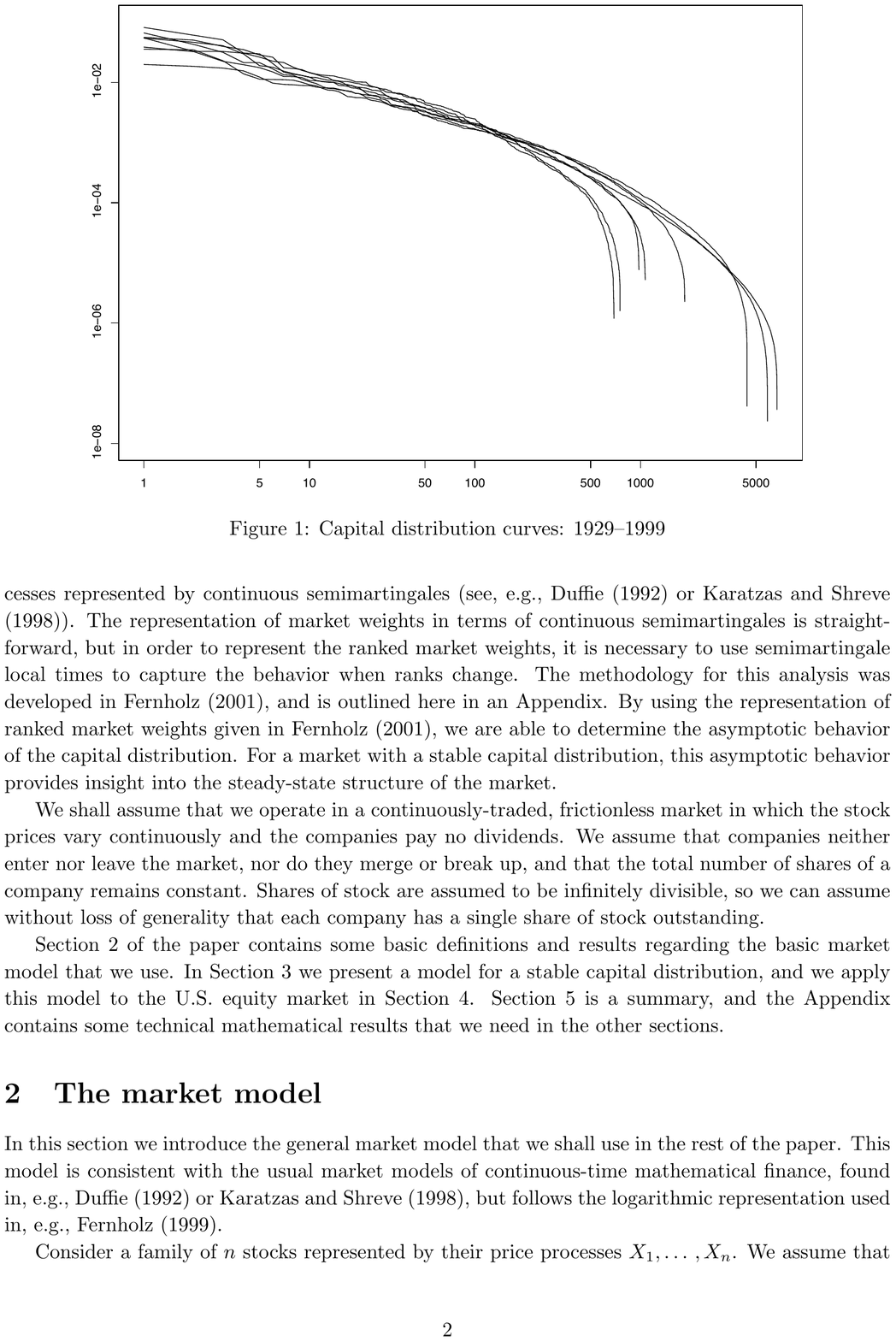}
\caption{Pareto plots of $\log \mu_i$ vs. $\log i$ from 1929--1999 \cite{F02}}
\end{figure}

\subsection{Discussion and comparison with previous work} The existence of relative arbitrage portfolios goes against the \textit{no-arbitrage} theory that has been the fundamental assumption in mathematical economics and finance. However, Robert Fernholz \cite{Fer99}, \cite[Section 3.3]{F02} observed that such portfolios do exist under observable conditions in real-world markets (which martingales do not satisfy). See also \cite{FK05} and \cite{FKK05}.

Fernholz assumed an It\^o process $\mu^{(n)}$ satisfying two conditions. (i) $\exists \;  \eta \in (0,1)$ such that $P\left(\sup_{t\ge 0}\max_{1\le i \le n} \mu^{(n)}_i(t) < 1- \eta\right)=1$. This condition is called \textit{diversity}. And, (ii) a uniform nondegeneracy condition of the diffusion coefficient. This condition is called \textit{being sufficiently volatile}. He then showed that exist explicit portfolio maps $\pi$ such that $\lim_{t\rightarrow \infty} V(t)=\infty$, irrespective of the law of $\mu$. 

The time that it takes to beat the market is of central importance. A natural question to ask is whether, given any small $T >0$, there exists a relative arbitrage portfolio $\pi^{(n)}$ such that $T_n \le T$ with probability one. The challenge, as in any problem of stochastic portfolio theory, is to make as few assumptions (preferably observable) as possible on the market process. Several recent articles have tried to resolve this question. In \cite{FKK05}, it was shown that such portfolios do exist under the condition of diversity. However, the solution is not practical, since to make any significant gain from those portfolios we require an astronomical initial investment. The authors also mention the difficulties of implementation on \cite[page 16]{FKK05}.       

Another solution was posed by \cite{BF08}, inspired by the previous work \cite{FK05} under the condition that the smallest stock in the market has an extreme volatility. To describe their result, define volatility of market weights by $\tau_i(t) = \frac{d}{dt} \iprod{\log \mu^{(n)}_i(t)}$. Then, \cite{BF08} assumes that $\tau_i(t) = 1/\mu^{(n)}_i(t) -1$. In particular, let $m(t)$ refer to the index $i$ such that $\mu^{(n)}_i(t)=\min_j \mu^{(n)}_j$. Since $\mu^{(n)}_{m(t)}(t) \le 1/n$, then $\tau_{m(t)}(t) \ge n-1$ and is growing linearly with dimension $n$. This leads to extreme fluctuation of lower ranked stocks in high dimension which is a drawback of this approach. Moreover, the amount of outperformance by this portfolio is super-exponentially small in the time to beat the market, as remarked on \cite[page 451]{BF08}.   

Recently, another solution has been proposed by Fernholz in \cite{fernshorterm} where it is assumed that the entropy of the market weights satisfy an almost sure time-homogeneity property. As with the previous solutions, the outperformance is not significant enough for these portfolios to be feasible. 

On a complimentary note, Fernholz, Karatzas, and Ruf \cite{FKR15} have shown examples in finite dimensions where short-term relative arbitrages do not exist. 
\medskip

Let us justify our assumptions. Consider condition (i) in Assumption \ref{asmp:mainasmp}. It is well-known that market weights have Pareto tails. See Figure 1 which is taken from \cite{F02}. Each curve represents log of ranked market weights in decreasing order against log of the rank, sampled once every decade from all major U.S. stock markets for eight decades. The curves are roughly linear, and hence the market weights are roughly Pareto, where the qualifier `some labeling' in condition (i) refers to rearrangement according to decreasing size. The appearance of Zipf distribution is not an isolated incident. Pareto tail are ubiquitous in econometric data. See the articles \cite{axtell} and \cite{gabaix}, the book \cite{zipfbook}, or the post \cite{taopost}. In Section \ref{sec:analysis}, we work with real data from June to December of 2015. The Pareto curve with a slope in $[-1/2, -1]$ remains valid as can be seen in Figure \ref{fig:capdist}.

Thus, for any $t$, there exists some labeling of coordinates which makes the sequence $\mu^{(n)}(t)$ approximately Pareto. The factor $n$ in $\Diri\left( n \nu^{(n)} \right)$ reflects the natural scale of fluctuation. One way to think about it is that the uniform distribution over $\simp^{(n)}$ is $\Diri(n\gamma)$ where $\gamma=(1/n, \ldots, 1/n)$. Our $\Diri\left( n \nu^{(n)}\right)$ is the same exponential family as the uniform distribution and keeps the same natural scaling. As will be clear from the proofs, our statements can be generalized to the case of $\Diri\left(c_0n\nu^{(n)}\right)$, for some positive constant $c_0$.

Consider condition (ii). Since Pareto is a stable configuration for the market weights, it cannot escape from a neighborhood of $\nu^{(n)}$ very fast. This is supported by real data as shown in Figure \ref{fig:entropy}.

Condition (iii) is a weaker requirement that the condition that the relative volatilities of the stocks is bounded away from zero. To wit, suppose 
\[
d\iprod{\log \mu^{(n)}_i(t), \log \mu^{(n)}_i(t)} \ge \varrho dt,\quad \text{for all $1\le i \le n$ and $0\le t \le \delta$},
\]
then \eqref{eq:volbnd} follows by It\^o's rule. Hence, our condition is similar to but much weaker than the condition of Fernholz's uniform nondegeneracy condition of the diffusion matrix of $\log \mu^{(n)}$. 

In Section \ref{sec:analysis} we show that the stationary Wright-Fisher diffusion whose invariant distribution is $\Diri\left( n\nu^{(n)} \right)$ satisfy our assumptions. We also justify our assumptions on recent real data (Russell 1000, Jun-Dec 2015) and test the performance of our portfolio for $n=1000$.

\subsection{Outline} Our results are true when the market weights decay as a regularly varying sequence of a range of indices. We recall the notion of regularly varying sequences in Section \ref{sec:rvseq}. In Section \ref{sec:KNexp} we introduce the notion of exponentially concave functions and relate them the recent notion of $(K,N)$ convexity from \cite{EKS}. We also recall the concept of functionally generated portfolios. In Section \ref{sec:Dirichlet}, we prove a concentration of measure theorem for Dirichlet distributions with regularly varying parameters. In Section \ref{sec:seqmarket} we combine the ideas to prove a much more generalized version of Theorem \ref{thm:mainproblimited}. Finally in Section \ref{sec:analysis} we provide theoretical models that satisfy our assumptions and show that the strategy works on real market data.

\section{Preliminaries}\label{sec:rvseq}

\subsection{Notations on orders of magnitude} We will throughout use the following standard notations. The letters $c,c_0, c_1, c_2, \ldots$ and $c',c_0', c_1', c_2', \ldots$ will always refer to positive constants. No two occurrences might refer to the same constant, unless otherwise stated or is obvious. 

For any two sequences of positive numbers $\left( a_n,\; n\in \NN\right)$ and $\left( b_n,\; n\in \NN  \right)$ the following hold. 

\begin{enumerate}[(i)]
\item $a_n = O(b_n)$ means that $a_n \le c_0 b_n$ for some positive constant $c_0$. 
\item $a_n=o(b_n)$ means $\lim_{n\rightarrow \infty} a_n/b_n=0$.
\item $a_n = \Omega(b_n)$ means that $a_n \ge c_0 b_n$ for some positive constant $c_0$.
\item $a_n = \Theta(b_n)$ means $c_0 b_n \le a_n \le c_1 b_n$ for some positive constants $c_0, c_1$. 
\item $a_n \sim b_n$ mean $\lim_{n\rightarrow \infty} a_n/b_n=1$. 
\end{enumerate}

\subsection{Regularly varying sequences}\label{subsec:rvseq}

Consider a non-increasing sequence $a^{(\infty)}=\left( a_1, a_2, \ldots  \right)$ in $(0,1)$. We will denote the $n$th partial sequence by $a^{(n)}:=\left( a_1, a_2, \ldots, a_n  \right)$. Define the usual $\mathbf{L}^p$ norms by 
\[
\norm{a^{(n)}}_p := \left[ \sum_{i=1}^n a_i^p    \right]^{1/p}.
\]
If $p$ is suppressed from the norm notation, $\norm{\cdot}$, then, $p$ is assumed to be $2$. 

We will use the following notations:
\begin{enumerate}[(i)]
\item $H_n:= \sum_{i=1}^n a_i$ for the $\mathbf{L}^1$ norm $\norm{a^{(n)}}_1$. 
\item And, for the ratio of the $\mathbf{L}^2$ norm over the square of the $\mathbf{L}^1$ norm:
\eq\label{eq:whatisrn}
R_n:= \frac{\norm{a^{(n)}}^2_2}{\norm{a^{(n)}}^2_1}= \frac{\sum_{i=1}^n a_i^2}{\left(  \sum_{i=1}^n  a_i \right)^2}. 
\en
\end{enumerate}

The number $R_n$ can be thought of as a measure of inequality among the coordinates of $a^{(n)}$. 
We will work under the following assumption on the sequence $a^{(\infty)}$ that require the coordinates of $a^{(\infty)}$ to decrease fast but not too fast.

\begin{asmp}\label{asmp:primasmp} We assume the following regarding our sequence $a^{(\infty)}$.
\begin{enumerate}[(i)]
\item $\lim_{n\rightarrow \infty} H_n= \infty$.  
\item Moreover, we require
\[
\lim_{n \rightarrow \infty} R_n=0, \quad \text{but}, \quad \lim_{n\rightarrow \infty} n R_n=\infty. 
\]
\end{enumerate}
\end{asmp}

The primary example we will follow throughout this paper is the following. Fix $\alpha \in [0, \infty)$ and consider the sequence $a_i = i^{-\alpha}$. Then $H^{(\alpha)}_n:= \sum_{i=1}^n i^{-\alpha}$. We will call this sequence $\left(H^{(\alpha)}_n,\; n\in \NN \right)$ as the hyperharmonic sequence. The name harmonic sequence will be reserved for the case of $\alpha=1$.

Clearly, condition (i) in Assumption \ref{asmp:primasmp} is satisfied only when $\alpha \in [0,1]$. In that regime, we will throughout employ the following asymptotic estimate:
\eq\label{eq:harmonicest}
H_n^{(\alpha)} \sim \begin{cases}
\log n, & \text{for}\; \alpha=1,\\
n^{1-\alpha}/(1-\alpha), & \text{for}\; \alpha \in [0, 1). 
\end{cases}
\en

Now, $\norm{a^{(n)}}_2^2= \sum_{i=1}^n i^{-2\alpha}= H_n^{(2\alpha)}$. Thus, if $\alpha \in (1/2,1]$, we have 
\eq\label{eq:estimate1}
\lim_{n\rightarrow \infty} \norm{a^{(n)}_2}< \infty, \quad \text{and}, \quad \sqrt{n}\frac{\norm{a^{(n)}}_2}{\norm{a^{(n)}}_1} = 
\begin{cases}
\Theta\left( n^{-1/2 + \alpha}    \right) & \text{for $\alpha \in (1/2, 1)$}\\
\Theta\left(  \sqrt{n}/\log n  \right) & \text{for $\alpha=1$}. 
\end{cases}
\en
Hence, condition (ii) of Assumption \ref{asmp:primasmp} holds as well. In fact, it continues to hold at $\alpha=1/2$ when $\norm{a^{(n)}}_1\sim 2 \sqrt{n}$ and $\norm{a^{(n)}}_2 \sim \sqrt{\log n}$, and hence
\eq\label{eq:estimate2}
R_n= \Theta\left( \frac{\log n}{n}  \right), \qquad n R_n = \Theta\left( \log n  \right).
\en
However, when $\alpha < 1/2$, we get
\[
 \sqrt{n}\frac{\norm{a^{(n)}}_2}{\norm{a^{(n)}}_1} \sim  \frac{1-\alpha}{\sqrt{1-2\alpha}}\frac{n^{1/2} n^{1/2-\alpha} }{n^{1- \alpha}} =  \frac{1-\alpha}{\sqrt{1-2\alpha}} > 0. 
\]
Hence, condition (ii) on Assumption \ref{asmp:primasmp} fails. Thus, Assumption \ref{asmp:primasmp} is valid for the range $\alpha \in [1/2, 1]$ but not elsewhere. 
\bigskip

To generalize the above calculations for other sequences we need to assume more regularity conditions on our sequence $\left( a_n,\; n \in \NN\right)$. We begin by recalling the definition of regularly varying sequences introduced in \cite{svseq}. 

\begin{defn}\label{defn:svseq} We call a sequence $\left(K_n, \; n \in \NN \right)$ of positive terms to be regularly varying with index $\rho\in [0,1]$ if there is another positive sequence $\left( L_n,\; n \in \NN  \right)$ such that 
\begin{enumerate}[(i)]
\item $K_n \sim c L_n$, for some positive constant $c$, 
\item and $\lim_{n\rightarrow \infty} n \left( 1- L_{n-1}/L_n    \right)=\rho$. 
\end{enumerate}
When $\rho=0$ we call the sequence slowly varying. 
\end{defn}

Regularly varying sequences admit a Karamata representation in the spirit of regularly varying functions on the positive half line. The following result is from \cite[Section 2]{svseq}. 

\begin{thm}\label{thm:karamata}
If $\left(  K_n, \; n \in \NN \right)$ is a regularly varying sequence of index $\rho$ then it has the representation
\eq\label{eq:karamata}
K_n = c_n n^\rho \exp\left[  \sum_{j=1}^n \frac{\varepsilon_j}{j}   \right], \quad n \in \NN, 
\en
where $\lim_{n\rightarrow \infty} c_n = c > 0$ and $\lim_{n \rightarrow \infty} \varepsilon_n= 0$. Here $\left(\varepsilon_n, \; n \in \NN\right)$ can be taken to be the sequence $\left(n \left( 1- L_{n-1}/L_n    \right),\; n \in \NN \right)$. 
\end{thm}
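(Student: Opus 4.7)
My plan is to telescope $\log L_n$ using the Taylor expansion of $-\log(1-x)$. Setting $\tilde{\varepsilon}_n := n(1 - L_{n-1}/L_n)$, Definition \ref{defn:svseq} gives $\tilde{\varepsilon}_n \to \rho$ and $L_n/L_{n-1} = (1 - \tilde{\varepsilon}_n/n)^{-1}$. For all large $n$ the ratio $\tilde{\varepsilon}_n/n$ is small, so $\log L_n - \log L_{n-1} = \tilde{\varepsilon}_n/n + r_n$ with $r_n = O(1/n^2)$ coming from the quadratic-and-higher terms. Telescoping from some large $n_0$, I get $\log L_n = \log L_{n_0} + \sum_{j=n_0+1}^n \tilde{\varepsilon}_j/j + \sum_{j=n_0+1}^n r_j$. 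The error series $\sum_j r_j$ is absolutely convergent, so its partial sums tend to a finite limit; exponentiating and absorbing that limit together with $L_{n_0}$ into a prefactor $\tilde{c}_n$ yields $L_n = \tilde{c}_n \exp\bigl(\sum_{j=1}^n \tilde{\varepsilon}_j/j\bigr)$ with $\tilde{c}_n$ tending to a positive constant. Since $K_n \sim c L_n$, the same representation holds for $K_n$ after a constant adjustment of the prefactor.

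To match the form $K_n = c_n n^\rho \exp\bigl(\sum \varepsilon_j/j\bigr)$ displayed in the theorem, I would then split $\tilde{\varepsilon}_j = \rho + \varepsilon_j$, so $\varepsilon_j \to 0$, and use the harmonic-sum asymptotic $\sum_{j=1}^n 1/j = \log n + \gamma + o(1)$ to extract $\rho\log n = \log n^\rho$ from $\sum \tilde{\varepsilon}_j/j$; the remaining bounded term $\rho\gamma + o(1)$ is absorbed into $c_n$. If one reads the theorem's parenthetical remark literally and takes $\varepsilon_n = \tilde{\varepsilon}_n$ itself (so that $\varepsilon_n \to \rho$ rather than $0$), this last splitting is simply omitted and the $n^\rho$ factor is kept inside the exponential.

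The only subtle point is that $\sum_{j=1}^n \varepsilon_j/j$ may diverge even when $\varepsilon_j \to 0$ (for example if $\varepsilon_j \sim 1/\log j$), so this sum must remain inside the exponential and cannot be folded into the prefactor; this is precisely why defining $c_n$ as the ratio $K_n \exp\bigl(-\rho\log n - \sum_{j=1}^n \varepsilon_j/j\bigr)$ produces a convergent prefactor while a naive collection of bounded contributions into $c_n$ would not. Apart from this observation, the proof is routine Taylor expansion plus telescoping and requires no deep analytic input; the hardest step is the initial identification of the right $r_n$ bound, which follows immediately once one notes that $\tilde{\varepsilon}_n$ is bounded.
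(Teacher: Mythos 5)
The paper does not prove Theorem~\ref{thm:karamata}; it quotes it directly from \cite{svseq}, so there is no in-paper proof to compare against. Your argument is the standard elementary telescoping proof of the Karamata representation for sequences, and it is correct: set $\tilde\varepsilon_n = n(1-L_{n-1}/L_n)\to\rho$, expand $\log(L_n/L_{n-1}) = -\log(1-\tilde\varepsilon_n/n) = \tilde\varepsilon_n/n + r_n$ with $r_n = O(n^{-2})$ (valid once $n$ is large enough that $\tilde\varepsilon_n < n$, which holds since $\tilde\varepsilon_n$ is bounded), telescope, absorb the convergent error series $\sum_j r_j$ and the finitely many initial terms into the prefactor, and transfer from $L_n$ to $K_n$ using $K_n\sim cL_n$. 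You also correctly identify the internal tension in the statement as quoted: the sequence $n(1-L_{n-1}/L_n)$ tends to $\rho$, not to $0$, so either one takes $\varepsilon_n$ to be that sequence and drops the external $n^\rho$ factor (it is already produced by $\exp\left(\sum_{j\le n}\varepsilon_j/j\right)$), or one subtracts $\rho$ to obtain $\varepsilon_n\to 0$ and extracts $n^\rho$ via $\rho\sum_{j\le n}1/j = \rho\log n + O(1)$, as you do. Your closing remark that $\sum_{j\le n}\varepsilon_j/j$ can diverge even when $\varepsilon_j\to 0$, and so must remain inside the exponential rather than be folded into the convergent prefactor, is exactly the right cautionary point and explains why the representation has the shape it has.
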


\begin{rmk}\label{rmk:normkara} If the sequence $(c_n,\; n \in \NN)$ can be replaced by the constant $c$ above, then the representation is called the normalized Karamata representation. This notion will be relevant later in Definition \ref{defn:zygmund} and Assumption \ref{asmp:primasmp1}.   
\end{rmk}

For $\rho \in [0,1]$, let $\alpha=1-\rho$. The hyperharmonic sequence $\left( H_n^{(\alpha)},\; n\in \NN  \right)$ is a regularly varying sequence of index $\rho$. Here, we take $H_n^{(\alpha)}=K_n$. The fact that it satisfies the definition is the consequence of the estimate \eqref{eq:harmonicest}. Notice that, for $\rho\in [0,1]$, we have
\[
\lim_{n\rightarrow \infty} n\left( 1 - \frac{H_{n-1}^{(\alpha)}}{H_n^{(\alpha)}}  \right)= \lim_{n\rightarrow \infty} \frac{n^{1-\alpha}}{H_n^{(\alpha)}}=(1-\alpha)=\rho.
\]
In that case, we can take $L_n=H_n^{(\alpha)}$ and representation \eqref{eq:karamata} is valid for the choice (say) $\varepsilon_j= j^\rho/H^{(1-\rho)}_j$, $j \in \NN$.

\begin{lemma}\label{lem:rnest}
Suppose $\left( H_n,\; n \in \NN  \right)$ is a regularly varying sequence with index $\rho > 0$, then it satisfies condition (i) in Assumption \ref{asmp:primasmp}. Additionally, if $\rho \in (0,1/2)$ and $\lim_{n\rightarrow \infty} n\left( 1 - H_{n-1}/ H_n   \right)=\rho$, then, it satisfies the conditions in Assumption \ref{asmp:primasmp}. 
\end{lemma}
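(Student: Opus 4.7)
The plan is to feed the Karamata representation of Theorem \ref{thm:karamata} into each clause of Assumption \ref{asmp:primasmp} separately. Write $H_n = c_n n^\rho \ell(n)$ with $\ell(n) = \exp\!\left[\sum_{j=1}^n \varepsilon_j/j\right]$, $c_n \to c > 0$, and $\varepsilon_j \to 0$. Because $|\varepsilon_j| < \delta$ for $j$ large, $\left|\sum_{j=1}^n \varepsilon_j/j\right| \leq C_\delta + \delta \log n$, so for every $\delta > 0$ the slowly varying factor satisfies $c'_\delta\, n^{-\delta} \leq \ell(n) \leq c''_\delta\, n^\delta$ for $n$ large. Thus $H_n$ is squeezed between constant multiples of $n^{\rho - \delta}$ and $n^{\rho + \delta}$ for arbitrarily small $\delta > 0$.

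For condition (i), I would choose $0 < \delta < \rho$ (possible because $\rho > 0$). The lower sandwich then gives $H_n \geq c\, n^{\rho - \delta} \to \infty$, which is exactly what is required; note this uses only regular variation, not the normalization hypothesis.

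For condition (ii), the extra assumption $n(1 - H_{n-1}/H_n) \to \rho$ combined with $a_n = H_n - H_{n-1}$ reads $n\, a_n/H_n \to \rho$, so $a_n \sim \rho H_n/n$. Plugging the Karamata sandwich into this gives $a_n^2 = O(n^{2\rho - 2 + 2\delta})$ for every $\delta > 0$. When $\rho \in (0, 1/2)$, I would pick $\delta < \tfrac{1}{2} - \rho$, making the exponent strictly less than $-1$, so that $\sum_{i \geq 1} a_i^2$ converges to a finite limit $S$; since every $a_i$ is positive, $S > 0$. This immediately yields $R_n \sim S/H_n^2 \to 0$. For the divergence $n R_n \to \infty$, I would use the upper sandwich $H_n \leq C\, n^{\rho + \delta'}$ with $\delta' < (1 - 2\rho)/2$, obtaining $n R_n \gtrsim n^{1 - 2\rho - 2\delta'} \to \infty$.

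The only technical subtlety is controlling the slowly varying factor $\ell(n)$ uniformly, but once the Karamata representation is in hand, $\ell(n)$ contributes only arbitrarily small powers of $n$ that are comfortably absorbed into the exponents on either side. Beyond this bookkeeping there is no real obstacle; the phase transition at $\rho = 1/2$ emerges precisely from the threshold at which $\sum a_i^2$ ceases to converge, which is exactly what the proof needs in order to trap $R_n$ and $nR_n$ from both sides.
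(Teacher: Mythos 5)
Your proposal is correct and follows essentially the same route as the paper: both establish condition (i) from regular variation with positive index, both derive $a_n \sim \rho H_n/n$ from the hypothesis $n(1 - H_{n-1}/H_n) \to \rho$, and both reduce $nR_n \to \infty$ to the Karamata-based fact that $n/H_n^2 \to \infty$ when $\rho < 1/2$. The one stylistic difference is in how $R_n \to 0$ is obtained: the paper uses the cruder but more elementary observation that $a_i^2 \le a_i$ (since $a_i \in (0,1)$), so $R_n \le 1/H_n \to 0$, whereas you deduce convergence of $\sum_i a_i^2$ to a finite positive limit $S$ and then read off both limits from the single estimate $R_n \sim S/H_n^2$. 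Your variant is arguably cleaner since the positive constant $S$ simultaneously gives the lower bound needed for $nR_n$, but it requires the Karamata sandwich even for the easier half of condition (ii), while the paper dispenses with it there. Substantively the two arguments are the same.
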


\begin{proof} Condition (i) follows from the result in \cite{svseq} that regularly varying sequences can be embedded in regularly varying functions by defining $K(x)=K(\floor{x})$ on $[1, \infty)$. However, regular varying functions with a positive index diverge at infinity. 

For condition (ii) we note that $a_i^2 \le a_i$ for every $i$. Hence $R_n \le 1/H_n= o(1)$. On the other hand
\[
\frac{a_n}{H_n} = \left( 1 - \frac{H_{n-1}}{H_n}  \right).
\] 
Hence, by our assumption of regular variation, $a_n = (1+ o(1))\rho H_n/n$. Therefore, 
\[
nR_n= \frac{n\sum_{i=1}^n a_i^2}{H_n^2} = \Theta\left( \frac{n}{H_n^2} \sum_{i=1}^n \frac{H_i^2}{i^2}   \right) = \Omega\left( \frac{n}{H_n^2}  \right), 
\]
the last bound is due to the fact that $H_i \le i$. When $\rho < 1/2$, it follows from Karamata representation that $\lim_{n\rightarrow \infty} n/H_n^2 = \infty$. 

\end{proof}

The case of slowly varying sequences have to be dealt separately since sequences that admit finite positive limits are also slowly varying and do not satisfy Assumption \ref{asmp:primasmp} (i). Moreover, as we see from \eqref{eq:estimate2}, the case of $\rho=\alpha=1/2$ is also delicate due to the presence of logarithmic terms. For these two boundary cases, we will make appropriate assumptions later in the text. In any case, for the rest of the text we will always assume that the sequence $(a_i,\;i\in \NN)$ is such that $\left(H_n,\; n \in \NN\right)$ is regularly varying of index $\rho \in [0,1/2]$ and satisfies Assumption \ref{asmp:primasmp}.

\section{$(K,N)$ exponentially concave functions}\label{sec:KNexp} We begin with a definition. \footnote{Thanks to Prof. W. Schachermayer for suggesting this apt definition.} 

\begin{defn}\label{defn:expcnv} A real-valued function $\varphi$ on an open convex domain $D$ in $\rr^n$ is said to be exponentially concave if $\exp\left( \varphi  \right)$ is a concave function on $D$. For usual practices in convex analysis, we will assume our functions to take the value $-\infty$ outside their domains, thereby extending them to the entire space. 
\end{defn}

As shown in \cite{PW14}, the gradient maps of the above class of functions arise as solutions to a novel and interesting optimal transport problem. Independently, the closely related concept of $(K,N)$ convexity has been very recently introduced in an article by Erbar, Kuwada, and Sturm \cite{EKS}. It is related to the curvature-dimension condition ($K$ standing for a lower bound on the Ricci curvature while $N$ is an upper bound on the effective dimension) and leads to fascinating behavior of entropy over heat flows. For our purpose, we will modify the definition slightly.

\begin{defn} For $K\in \rr$ and $N >0$, a function $\varphi:\rr^n \rightarrow [-\infty, \infty)$ is said to be $(K,N)$ exponentially concave if $\Phi:=\exp\left( N^{-1} \varphi  \right)$ is a concave function on $\rr^n$ satisfying 
\[
\frac{1}{\Phi}\Hess\; \Phi \le -\frac{K}{N}.  
\]
The right hand side above represents the scalar $-K/N$ multiplied with the identity matrix and the inequality is in the sense of two nonpositive-definite matrices. If the Hessian does not exist in the classical sense, interpret it as a measure in the sense of Alexandrov. 

Alternatively, one can write the above as 
\[
\Hess\; \varphi + \frac{1}{N} \left(\nabla \varphi\right) \left(\nabla \varphi\right)' \le - K. 
\]
The above inequality is the usual ordering of two nonpositive-definite matrices.   
\end{defn}

We will focus exclusively on the case when $N=1$ and $K=n$, the number of stocks. Thus, in $\rr^n$, we will only consider $(n,1)$ exponentially concave functions. The following is a fundamental example.

\begin{lemma}\label{lem:expcnv}
For any $x_0 \in \rr^n$, the function $\varphi(x)=\log \cos\left(  \sqrt{n}\norm{x- x_0} \right)$ on the domain $\sqrt{n} \norm{x- x_0}  < \pi/2$ is $(n,1)$ exponentially concave. 

Moreover, consider any $(n,1)$ exponentially concave function $\varphi$ and let $x_0$ be its maximizer. Then, for any $x \in \rr^n$, the following inequality holds:
\[
\varphi(x) \le \varphi(x_0) + \log \cos\left( \sqrt{n}\norm{x-x_0}  \right). 
\]
In particular, the domain of any $(n,1)$ exponentially concave function must be a subset of $\left\{ x:\; \sqrt{n}\norm{x-x_0} < \pi/2  \right\}$.
\end{lemma}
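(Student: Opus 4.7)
The plan is to verify the three assertions in turn, treating the first as a direct Hessian computation and the second and third as consequences of a one-dimensional Sturm-type comparison along radial lines from $x_0$.

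For the cosine example, I would translate so that $x_0 = 0$ and write $\Phi := e^{\varphi} = f(r)$ with $r = \|x\|$ and $f(s) = \cos(\sqrt{n}\,s)$. The Hessian of a radial function admits the familiar decomposition
\[
\Hess\,\Phi(x) = f''(r)\,\hat r \hat r^{\top} + \frac{f'(r)}{r}\bigl(I - \hat r \hat r^{\top}\bigr), \qquad \hat r := x/r,
\]
so its eigenvalues are $f''(r) = -n\cos(\sqrt{n}r)$ in the radial direction and $f'(r)/r = -\sqrt{n}\sin(\sqrt{n}r)/r$ with multiplicity $n-1$ in the orthogonal ones. The radial eigenvalue already equals $-n\Phi(x)$, and the orthogonal bound $f'(r)/r \le -n\Phi(x)$ is equivalent to the standard inequality $\tan(\sqrt{n}r) \ge \sqrt{n}r$ on $(0,\pi/2)$; the $r \downarrow 0$ limit gives equality. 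Nonpositivity of both eigenvalues also delivers concavity of $\Phi$ on the open ball, completing (i).

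For the pointwise bound, I would set $\Phi = e^{\varphi}$ and, for a unit vector $v$, define $u(t) := \Phi(x_0 + tv)$. Since $x_0$ is a maximum of $\Phi$, one has $\nabla\Phi(x_0) = 0$, so $u(0) = \Phi(x_0)$ and $u'(0) = 0$; the $(n,1)$ condition gives $u''(t) + n\,u(t) \le 0$ wherever $u > 0$. The exact solution of the associated Jacobi equation with the same initial data is $w(t) := \Phi(x_0)\cos(\sqrt{n}t)$. Setting $h := u - w$, the key identity
\[
\bigl(h'(t)\cos(\sqrt{n}t) + \sqrt{n}\,h(t)\sin(\sqrt{n}t)\bigr)' = \bigl(h''(t) + n\,h(t)\bigr)\cos(\sqrt{n}t) \le 0
\]
on $[0, \pi/(2\sqrt{n}))$, together with the vanishing of the bracketed expression at $t = 0$, yields $(h/\cos(\sqrt{n}\,\cdot))' \le 0$ and hence $h \le 0$. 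Specializing $t = \|x - x_0\|$ and $v = (x-x_0)/\|x-x_0\|$ and taking logs gives the claimed bound.

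For the domain inclusion, I would argue by contradiction: if $y$ lies in the effective domain then $\Phi(y) > 0$, and convexity of the effective domain of the concave function $\Phi$ places the entire segment $[x_0, y]$ inside it, so $u$ is strictly positive on $[0, \|y - x_0\|]$. If we had $\sqrt{n}\|y - x_0\| \ge \pi/2$, passing $t \uparrow \pi/(2\sqrt{n})$ in the bound from the previous step forces $u(\pi/(2\sqrt{n})) \le \Phi(x_0)\cos(\pi/2) = 0$, contradicting positivity. The main subtlety I anticipate is the regularity needed for the Sturm step when $\Hess\,\Phi$ is interpreted only in the Alexandrov sense; this can be addressed either by standard mollification, or by appealing to the $(K,N)$-Jensen inequality of \cite{EKS} to avoid second derivatives altogether.
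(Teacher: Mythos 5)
Your proof is correct, and it departs from the paper's in two meaningful ways. For the first claim, the paper restricts $\Phi$ to an arbitrary line segment $\gamma_t = x + t(y-x)$, differentiates twice, and invokes Cauchy--Schwarz; you instead diagonalize the Hessian of the radial function, obtaining the radial eigenvalue $-n\cos(\sqrt{n}r)$ and the $(n-1)$-fold tangential eigenvalue $-\sqrt{n}\sin(\sqrt{n}r)/r$. The two are computationally equivalent (the paper's two terms in $g''$ are exactly the radial and tangential contributions), but your version isolates the decisive scalar inequality $\tan u \ge u$ on $(0,\pi/2)$, which the paper leaves implicit behind the Cauchy--Schwarz remark; making this explicit is a genuine improvement in clarity, since Cauchy--Schwarz alone only gives nonnegativity of the coefficient $1 - b^2$, not the required comparison between the two eigenvalues. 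For the second claim, the paper cites \cite[Lemma 2.2(iii)]{EKS} as a black box; you instead supply the one-dimensional Sturm comparison directly, via the Wronskian-type identity for $h = u - w$, which makes the argument self-contained and more transparent. Your treatment of the domain statement by contradiction is the same as what the paper implicitly intends. Finally, you correctly flag the same regularity caveat the paper does (Alexandrov Hessian / nonsmooth $\varphi$): the paper handles it by infimal convolution with smooth convex functions, and your suggested mollification or appeal to the $(K,N)$-Jensen inequality of \cite{EKS} are both acceptable alternatives. In short, same mathematical content, but your route is more explicit about the one genuinely nontrivial inequality and replaces an external citation with a short direct argument.
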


\begin{proof}
For the first claim, it suffices to consider $x_0=0$. Consider any two points $x, y$ and consider the line $\gamma_t= x + t(y-x)$, $t\in [0,1]$, connecting the two. Let $g(t)= \Phi(\gamma_t)$. Then 
\[
\begin{split}
g(t)&= \cos\left( \sqrt{n}\norm{\gamma_t} \right), \quad g'(t)=-\sqrt{n}\sin\left(  \sqrt{n}\norm{\gamma_t}  \right)\frac{\iprod{y-x, \gamma_t}}{\norm{\gamma_t}}\\
g''(t) &= - n  \cos\left( \sqrt{n}\norm{\gamma_t} \right) \left(\frac{\iprod{y-x, \gamma_t}}{\norm{\gamma_t}}\right)^2\\
&- \sqrt{n} \sin\left(  \sqrt{n}\norm{\gamma_t}  \right)\frac{\norm{y-x}^2}{\norm{\gamma_t}} \left[  1 -  \frac{\iprod{y-x, \gamma_t}^2}{\norm{\gamma_t}^2\norm{y-x}^2} \right].
\end{split}
\]

On our domain both $\sin\left(  \sqrt{n}\norm{\gamma_t}  \right)$ and $\cos\left(  \sqrt{n}\norm{\gamma_t}  \right)$ are positive. This observation and the Cauchy-Schwartz inequality implies that $g''(t)\le - n g(t)$. Since this holds for every choice of $x,y$, it proves our claim. 

For the second claim, it is clear that the maximizer exists for this strictly concave function and that $\nabla \varphi(x_0)=0$. 
If $\nabla \varphi$ does not exist at $x_0$, we do a standard approximation by approximating the function with infimal convolutions with a sequence of smooth convex functions. The claim now follows from \cite[Lemma 2.2, part (iii)]{EKS} by considering, as before, the line $\gamma_t= x_0 + t (x- x_0)$. The cited lemma is itself a consequence of comparison theorem for one-dimensional concave functions considered as a function of $t$ over the geodesic $\left(\gamma_t,\; t\in [0,1] \right)$. 

The final statement is now obvious.  
\end{proof}

\begin{rmk}\label{rmk:genmat}
By a change of coordinates it is easy to see that the concept of $(K,N)$ can be generalized in the following way. Let $\Sigma$ be an $n \times n$ nonnegative definite matrix. Define a norm $\norm{x}_{\Sigma}= \sqrt{x' \Sigma x}$. Consider the function $\varphi(x)=\log \cos\left(  \norm{x-x_0}_\Sigma  \right)$ over the domain $ \norm{x-x_0}_\Sigma < \pi/2$. Then, $\Phi= \exp(\varphi)$ satisfies the inequality
\[
\frac{1}{\Phi(x)} \Hess\; \Phi(x) \le - \Sigma,
\]  
and $\varphi$ is the \textit{maximal} one satisfying the above in the sense of Lemma \ref{lem:expcnv}.
\end{rmk}

We now recall the concept of functionally generated portfolios \cite[Chapter 3]{F02}, \cite[Sec 2.3]{PW14}. 

\begin{defn}\label{defn:fgp}
Let $\varphi$ be an exponentially concave function on $\rr^n$ whose domain includes $\simp^{(n)}$. Assume $\varphi$ is differentiable on its domain. We define a map $\pi:\simp^{(n)} \rightarrow \overline{\simp^{(n)}}$ by the following recipe. For every $p\in \simp^{(n)}$, let $v:=\nabla \varphi(p)$ denote the gradient of $\varphi$ at $p$.  Denote the coordinates of $\pi(p)$ by $\left(\pi_1, \ldots, \pi_n \right)$. Then, 
\eq\label{eq:fgp}
\frac{\pi_i}{p_i} = v_i + 1 - \iprod{p,v}. 
\en
In other words, the vector of coordinate ratios $\left(\pi_i/p_i,\; i=1,2,\ldots, n \right)$ is the projection of the gradient $\nabla \varphi(p)$ on the hyperplane $\left\{ y \in \rr^n:\; \iprod{y,p}=1   \right\}$. 
\end{defn}

The above portfolio can now be traded on process of market weights $\mu(\cdot)$. 
Assume that $\varphi$ is twice continuously differentiable on its domain. Let $\Phi=\exp\left( \varphi \right)$. Then the following expression for $\log V(\cdot)$ is known as the Fernholz's decomposition formula. See \cite[Theorem 3.1.5]{F02}.
\eq\label{eq:fernholz}
\log V(t) = \varphi\left( \mu(t) \right) - \varphi\left( \mu(0) \right) - \int_0^t \frac{1}{2\Phi(\mu(s))} \Hess\; \Phi\left( d\mu(s), d\mu(s)  \right). 
\en
Here and throughout, $ \Hess\ \Phi\left( d\mu(s), d\mu(s)  \right)$ refers to the sum of all the elements of the Hadamard product of $\Hess\ \Phi\left( \mu(s) \right)$ and the matrix of infinitesimal mutual variations $\left( d\iprod{\mu_i(s), \mu_j(s)},\; 1\le i,j \le n   \right)$. That is, 
\[
\Hess\; \Phi\left( d\mu(s), d\mu(s)  \right)= \sum_{i=1}^n \sum_{j=1}^n \partial_{ij} \Phi\left( \mu(s) \right) d\iprod{\mu_i, \mu_j}(s),
\] 
and is well-defined when considered as a measure. We will call the following nonnegative nondecreasing process to be the drift process:
\[
\Theta(t):=- \int_0^t \frac{1}{2\Phi(\mu(s))} \Hess\; \Phi\left( d\mu(s), d\mu(s)  \right).
\]

We now define the class of portfolios which we will use to construct our ASTRA sequence. 

\begin{defn}\label{defn:cosine}[Cosine portfolios] Fix $c>0$, $n \in \NN$, a point $x_0 \in \simp^{(n)}$ and consider the portfolio generated by the exponentially concave function $\varphi(x)=\log \cos\left( c\norm{x-x_0} \right)$ as per Definition \ref{defn:fgp}. We will refer to this class of portfolios as the cosine portfolios. 
\end{defn}

\section{The Dirichlet concentration}\label{sec:Dirichlet} Suppose we are given a sequence $a^{(\infty)}$ as in Subsection \ref{subsec:rvseq}. For every $n \ge 2$, consider the unit simplex in $\rr^n$, given in \eqref{eq:unitsimp}. We can get an element in $\Delta^{(n)}$ by defining $\nu^{(n)}= \left( a_1, \ldots, a_n \right)/H_n$. 

Fix $\gamma:=\left(  \gamma_1, \ldots, \gamma_n  \right)\in (0, \infty)^{n}$ for some $n \ge 2$. Recall the Dirichlet distribution with parameter $\gamma$, denoted by $\Diri(\gamma)$. This is a probability distribution on the unit simplex given by the joint density at a point $x\in \Delta^{(n)}$:
\[
\frac{1}{B(\gamma)} \prod\limits_{i=1}^n x_i^{\gamma_i-1}, \quad \text{where}\quad B(\gamma)= \frac{\prod_{i=1}^n \Gamma(\gamma_i)}{\Gamma\left( \sum_{i=1}^n \gamma_i  \right)}.
\] 
Here $\Gamma(\cdot)$ refers to the gamma function.

Consider a sequence $a^{(\infty)}$ satisfying Assumption \ref{asmp:primasmp}. Consider a sequence of Dirichlet distributions with parameters $\gamma^{(n)}:=n\nu^{(n)}$, for $n \ge 2$. This Dirichlet distribution has mean $\nu^{(n)}$ and the same concentration as the uniform distribution over $\Delta^{(n)}$. In fact, if $X\sim \Diri(n \nu^{(n)})$ the following formulas hold
\eq\label{eq:meanvardir}
\E\left( X_i  \right)= \nu_i^{(n)}, \quad \Var\left(  X_i\right)= \frac{\nu_i^{(n)}\left( 1 - \nu_i^{(n)}\right)}{n+1}. 
\en
We start with the following proposition.

\begin{prop}\label{prop:direxpect} Consider the vector $X$ as above. Let $Y_i=\left(X_i - \nu_i^{(n)}\right)^2$ and let $Y= \sum_{i=1}^n Y_i$. Then 
\eq\label{eq:ulbndexpec}
(1- o(1))\sqrt{\frac{2}{\pi} (1 - R_n)} \le \E\left( \sqrt{nY} \right) \le \sqrt{1 - R_n} \le 1. 
\en
\end{prop}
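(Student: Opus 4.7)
My plan to prove Proposition \ref{prop:direxpect} is as follows.

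First, I would dispatch both upper bounds by a single application of Jensen's inequality to the concave function $x \mapsto \sqrt{x}$: $\E\sqrt{nY} \le \sqrt{n\,\E Y}$. Using \eqref{eq:meanvardir}, $\E Y = \sum_i \Var(X_i) = \frac{1}{n+1}\sum_i \nu_i^{(n)}(1-\nu_i^{(n)}) = (1-R_n)/(n+1)$, since $\sum_i(\nu_i^{(n)})^2 = \|\nu^{(n)}\|_2^2 = R_n$. Hence $\E\sqrt{nY} \le \sqrt{n(1-R_n)/(n+1)} \le \sqrt{1-R_n} \le 1$, giving both upper bounds at once.

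The lower bound is the harder direction, and I would attack it with a Gaussian averaging trick. Let $G=(G_1,\ldots,G_n)$ be iid $N(0,1)$ independent of $X$. For any fixed vector $v\in\rr^n$, $\langle G,v\rangle\sim N(0,\|v\|^2)$ and so $\E_G|\langle G,v\rangle| = \sqrt{2/\pi}\,\|v\|_2$. Applying this with $v=\sqrt{n}(X-\nu^{(n)})$ and swapping expectations by Fubini,
\[
\E\sqrt{nY} = \sqrt{\pi/2}\;\E_G\,\E_X |T_g|, \qquad T_g := \sqrt{n}\sum_i g_i(X_i-\nu_i^{(n)}),
\]
where, for each fixed $g$, the random variable $T_g$ has mean zero and variance $\sigma_g^2 = \frac{n}{n+1}\bigl(g^T D g - (g^T\nu^{(n)})^2\bigr)$ with $D=\mathrm{diag}(\nu^{(n)})$.

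The main obstacle is the quantitative CLT $\E_X|T_g|\ge (1-o(1))\sqrt{2/\pi}\,\sigma_g$, with control uniform enough in $g$ to survive the outer $G$-expectation. I would route this through the Gamma representation $X_i = \Gamma_i/S$ with $\Gamma_i\sim\mathrm{Gamma}(n\nu_i^{(n)})$ independent and $S=\sum_i\Gamma_i$; since the fluctuations of $S$ about its mean $n$ are only of order $\sqrt{n}$, $T_g$ reduces, to leading order, to the independent sum $(1/\sqrt{n})\sum_i(\Gamma_i-n\nu_i^{(n)})(g_i - g^T\nu^{(n)})$, to which Berry--Esseen applies. The Lyapunov-type condition needed for the uniform CLT is provided by the regular-variation Assumption \ref{asmp:primasmp} (no single coordinate of $\nu^{(n)}$ dominates the $\mathbf{L}^2$ mass), and atypical $g$ are killed by Gaussian concentration.

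Finally, I would close with a lower bound on $\E_G\sigma_G$ via the following elementary Gaussian lemma: for any centered Gaussian $Z\sim N(0,\Xi)$ on $\rr^n$, $\E\|Z\|_2\ge\sqrt{2/\pi}\sqrt{\mathrm{tr}(\Xi)}$. The proof is one line: diagonalizing, $\|Z\|_2=\sqrt{\mathrm{tr}(\Xi)}\sqrt{\sum_j p_j\xi_j^2}$ with $p_j=\lambda_j/\mathrm{tr}(\Xi)\in\overline{\simp^{(n)}}$ and iid $\xi_j\sim N(0,1)$; for each fixed $\xi$ the map $p\mapsto\sqrt{\sum p_j\xi_j^2}$ is concave, hence so is its $\xi$-expectation, and its minimum over the closed simplex is attained at a vertex where it equals $\E|\xi_1|=\sqrt{2/\pi}$. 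Since $\sigma_G=\|Z\|_2$ for $Z=\sqrt{n/(n+1)}\,\Sigma^{1/2}G$ with $\Sigma=D-\nu^{(n)}(\nu^{(n)})^T$, and $\mathrm{tr}(n\Sigma/(n+1))=n(1-R_n)/(n+1)$, combining the CLT with this lemma yields $\E\sqrt{nY}\ge (1-o(1))\sqrt{(2/\pi)\cdot n(1-R_n)/(n+1)} = (1-o(1))\sqrt{(2/\pi)(1-R_n)}$, as claimed.
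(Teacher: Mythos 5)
Your upper bound is identical to the paper's (Jensen applied twice plus the Dirichlet variance formula), so there is nothing to add there.

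Your lower bound takes a genuinely different route. The paper's argument is short and self-contained: it applies Jensen to the convex function $\norm{\cdot}_2$ on the vector $\left(\abs{X_i-\nu_i^{(n)}}\right)_i$ to get $\E\sqrt{Y}\ge\bigl(\sum_i(\E\abs{X_i-\nu_i^{(n)}})^2\bigr)^{1/2}$, recalls the exact closed-form expression for the mean absolute deviation of a $\Beta(n\nu_i,n(1-\nu_i))$ random variable, and then applies Stirling to obtain $\E\abs{X_i-\nu_i^{(n)}}\sim\sqrt{2\nu_i(1-\nu_i)/(n\pi)}$ coordinate by coordinate; summing in $i$ gives the bound immediately, with no CLT and no Gaussian averaging. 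Your route replaces the per-coordinate exact formula with a Gaussian-averaging identity $\E\norm{v}_2=\sqrt{\pi/2}\,\E_G\abs{\iprod{G,v}}$ together with a quantitative CLT for the induced linear statistic of the Gamma coordinates and a clean (and correct) convexity lemma for $\E\norm{Z}_2$ of a centered Gaussian. What your approach buys is generality (it would apply to any $X$ for which a uniform Berry--Esseen-type bound for $\iprod{g,X-\E X}$ is available, not just Dirichlet/Beta); what it costs is that the key step is substantially harder than anything in the paper's proof.

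That key step is also where the genuine gap lies. You need $\E_X\abs{T_g}\ge(1-o(1))\sqrt{2/\pi}\,\sigma_g$ with an $o(1)$ uniform enough in $g$ that the bad set of $g$'s contributes negligibly to $\E_G\sigma_G$; you assert this follows from Berry--Esseen via the Gamma representation plus ``Gaussian concentration kills atypical $g$,'' but none of it is carried out. Two specific concerns. First, the Lyapunov ratio $\sum_i\abs{g_i-g^T\nu^{(n)}}^3\,\E\abs{\Gamma_i-n\nu_i^{(n)}}^3\big/\sigma_g^3$ is not obviously small uniformly: for the tail coordinates the Gamma shape parameter $n\nu_i^{(n)}$ can be $O(1)$ or smaller (e.g.\ when $\alpha=1$), and the third absolute moment there is $\Theta(n\nu_i^{(n)})$ rather than $\Theta((n\nu_i^{(n)})^{3/2})$, so the estimate has to be split by magnitude of $n\nu_i^{(n)}$ and the two pieces bounded separately; this is doable under Assumption \ref{asmp:primasmp} but is precisely the sort of bookkeeping the paper avoids. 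Second, discarding atypical $g$ is not automatic, because $\sigma_g$ grows with $\norm{g}$ and the contribution of large-$\norm{g}$ events to $\E_G\sigma_G$ has to be shown to be $o(\sqrt{1-R_n})$; you would want something like a Cauchy--Schwarz truncation $\E_G[\sigma_G\one_{\text{bad}}]\le(\E_G\sigma_G^2)^{1/2}\P(\text{bad})^{1/2}$ made explicit. Until these are filled in, the proposal is a correct and interesting plan rather than a proof; the paper's Beta-MAD argument is considerably more economical for this particular statement.
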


\begin{rmk} Since $\sqrt{2/\pi}\in ( 0.79, 0.8)$, we get $\E(\sqrt{nY})$ is essentially in the interval $[0.79, 1]$ for large enough $n$. 
\end{rmk}

\begin{proof} The upper bound follows from Jensen's inequality:
\[
\E \sqrt{Y} \le \sqrt{\E(Y)} = \sqrt{\sum_{i=1}^n \E Y_i} \le \sqrt{\frac{1}{n+1}\left[\sum_{i=1}^n \nu_i^{(n)}- \sum_{i=1}^n \left(\nu_i^{(n)}\right)^2\right]} \le \sqrt{\frac{1- R_n}{n}}. 
\]
The final inequality is due to the fact that $\nu^{(n)} \in \Delta^{(n)}$. 

For the lower bound, we note that the function $\norm{x}_2$ is convex on $\rr^n$. Thus, again by Jensen's inequality, we get 
\eq\label{eq:expecjensen2}
\E\left(\sqrt{Y} \right) = \E\norm{\abs{X- \nu^{(n)}}}_2 \ge \sqrt{\sum_{i=1}^n \left(  \E \abs{X_i - \nu_i^{(n)}}  \right)^2}.
\en
We recall the following fact about the Beta distribution. Suppose $\xi \sim \Beta(np, n(1-p))$ for $p \in (0,1)$, then (see \cite[Page 14]{betaref}):
\[
\E \abs{\xi- p} = \frac{2 p^{np} (1-p)^{n(1-p)}}{n B(np, n(1-p))}=\frac{2}{n} \frac{p^{np} (1-p)^{n(1-p)} \Gamma(n)}{\Gamma(np) \Gamma(n(1-p))} \sim \sqrt{ \frac{2 p(1-p)}{n \pi}}. 
\] 
The last approximation is due to Stirling's formula. 

Since each $X_i$ is distributed as $\Beta\left( n\nu_i^{(n)}, n\left( 1- \nu_i^{(n)} \right) \right)$, putting the above formula back in \eqref{eq:expecjensen2} we get
\[
\E \left(\sqrt{nY}\right) \ge (1- o(1)) \sqrt{\frac{2}{\pi} \sum_{i=1}^n \nu_i^{(n)}\left( 1 - \nu_i^{(n)} \right)} = (1- o(1))\sqrt{\frac{2}{\pi} (1- R_n)}.  
\]
This completes the proof.
\end{proof}

Proposition \ref{prop:direxpect} shows that the family of random variables $\left(\sqrt{n} \norm{X- \nu^{(n)}},\; n \in \NN \right)$ has bounded mean. We will now prove concentration estimates for the distributions in this sequence around their means.

It is not hard to believe that if $X$ is distributed according to the uniform distribution on the unit simplex, then $\sqrt{n}\norm{X - \E(X)}$ has exponentially decaying tails away from its mean. However, when $\nu^{(n)}$ itself is {atypical} for the uniform distribution this exponential decay starts failing.

We are now ready to prove our concentration estimate. As always, we will assume that $\left(H_n,\; n \in \NN\right)$ is regularly varying of index $\rho \in [0, 1/2]$. However, the proofs are somewhat different for the following two cases: (i) the subcritical case when $\rho \in (0, 1/2]$, and (ii) the critical case when $\rho=0$. We start with (i).

Suppose we are given the sequence $\left( a_n, \; n \in \NN\right)$. We will use the following well-known construction of Dirichlet random variables. Let $Z_1, Z_2, \ldots, Z_n$ be a sequence of independent gamma random variables of scale one such that $\E(Z_i)=n \nu^{(n)}_i$. Let $S_n=Z_1+ \ldots + Z_n$ denote the sequence of their partial sums. Then the vector
\[
\left( \frac{Z_1}{S_n}, \frac{Z_2}{S_n}, \ldots, \frac{Z_n}{S_n}    \right)
\]
is distributed as $\Diri\left( n\nu^{(n)} \right)$. It also follows that, independent of the vector of ratios above, $S_n$ is distributed as Gamma$(n,1)$, a gamma random variable with mean $n$ and scale one. 

We start with by a standard large deviation estimate of $S_n$ around its mean $n$. The result is well-known but we include a short argument anyway. 

\begin{lemma}\label{lem:sconcen}
Fix $u >0$. For all $n> u^2$ we have
\eq\label{eq:boundonS}
\P\left( \abs{S_n-n} > u \sqrt{n}  \right) \le 2 e^{-u^2/4}.
\en
\end{lemma}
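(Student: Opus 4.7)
The plan is to use the Chernoff bound together with the explicit moment generating function of a gamma random variable. Since $S_n \sim \Gamma(n,1)$, one has $\E e^{\lambda(S_n - n)} = e^{-\lambda n}(1-\lambda)^{-n}$ for every $\lambda \in (0,1)$, while $\E e^{-\lambda(S_n-n)} = e^{\lambda n}(1+\lambda)^{-n}$ for every $\lambda > 0$. I will split the event $\{|S_n - n| > u\sqrt{n}\}$ into its two tails and bound each by $e^{-u^2/4}$, so that the union bound produces the stated factor of $2$.

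For the upper tail, Markov gives $\P(S_n - n > u\sqrt{n}) \le \exp\bigl(-\lambda u\sqrt{n} - \lambda n - n\log(1-\lambda)\bigr)$ for $\lambda \in (0,1)$. The key analytic input is the elementary inequality
\[
-\log(1-\lambda) \le \lambda + \lambda^2, \qquad \lambda \in [0,1/2],
\]
which follows by comparing the power series $\sum_{k\ge 2}\lambda^k/k$ with $\lambda^2$ on $[0,1/2]$. I then plug in $\lambda = u/(2\sqrt{n})$; the hypothesis $n > u^2$ guarantees $\lambda < 1/2$, so the inequality applies and the exponent simplifies to exactly $-u^2/4$ after cancellation. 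This yields $\P(S_n - n > u\sqrt{n}) \le e^{-u^2/4}$.

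For the lower tail, Markov gives $\P(n - S_n > u\sqrt{n}) \le \exp\bigl(-\lambda u\sqrt{n} + \lambda n - n\log(1+\lambda)\bigr)$ for all $\lambda > 0$, and now I use $\log(1+\lambda) \ge \lambda - \lambda^2/2$. The exponent becomes $-\lambda u\sqrt{n} + n\lambda^2/2$, which is minimized at $\lambda = u/\sqrt{n}$, producing the (strictly sharper) bound $e^{-u^2/2} \le e^{-u^2/4}$. Adding the two tails yields the lemma.

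There is no real obstacle here; the only mild care required is in justifying the convergent-series inequality and in checking that the chosen $\lambda$ stays in the region where the MGF is finite, which is precisely the role of the assumption $n > u^2$.
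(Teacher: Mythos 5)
Your proof is correct and follows essentially the same route as the paper: both are Chernoff-type bounds on the Gamma$(n,1)$ tails, the paper packaging the computation through Cram\'er's non-asymptotic bound from Dembo--Zeitouni after estimating the rate function $I(y)=y-n+n\log(n/y)$ with the inequality $\log(1+\sigma)\le\sigma-\sigma^2/4$, while you carry out the exponential Markov optimization by hand with the parallel elementary inequalities $-\log(1-\lambda)\le\lambda+\lambda^2$ and $\log(1+\lambda)\ge\lambda-\lambda^2/2$. The hypothesis $n>u^2$ plays the same role in both: it keeps the deviation parameter in the regime where the chosen quadratic bound on the log-MGF is valid.
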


\begin{proof}[Proof of Lemma \ref{lem:sconcen}] The convex conjugate of the log-moment generating function of $S_n$ is given by 
\[
I(y)=\begin{cases}
y - n + n \log\left( n/ y \right), &\quad \text{for $y >0$},\\
\infty &\quad \text{otherwise}. 
\end{cases}
\]
For any $\sigma \in (0,1)$ consider $F$ to be the closed interval $[0, (1-\sigma)n] \cup [(1+\sigma)n, \infty)$. Since $I$ is zero at $y=n$, increasing on $[n, \infty)$ and decreasing on $[0,n]$, then 
\[
\begin{split}
\inf_{x\in F} I(x) &=\min\left(  \sigma n + n \log\left(1/(1+\sigma) \right), -\sigma n + n \log\left(1/(1-\sigma)  \right)  \right)\\
&=n \min\left( \sigma - \log(1+\sigma), -\sigma - \log(1-\sigma)  \right)\\  
&\ge \frac{n\sigma^2}{4},\quad \text{since $\log(1 + \sigma) \le \sigma - \sigma^2/4$ for all $-1< \sigma < 1$}. 
\end{split}
\]

We now use Cram\'er's non-asymptotic bound (see Remark (c) on page 27 of \cite{DZ98}). Fix $u > 0$. For any $n$ such that $u < n^{1/2}$, we get 
\eq
\P\left( \abs{S_n-n} > u \sqrt{n}  \right)= \P\left( \abs{S_n-n} > \frac{u}{n^{1/2}} n \right) \le 2 e^{-u^2/4}.
\en
This completes the proof of the lemma. 
\end{proof}

We now consider the subcritical case. Recall $X$ is distributed as $\Diri\left( n\nu^{(n)} \right)$ and the notation $\norm{\cdot}$ refers to the $2$-norm $\norm{\cdot}_2$. Every concentration bound we prove below is for a two sided deviation around the mean, i.e., of the type
\[
\P\left( \abs{ \sqrt{n} \norm{X^{(n)} -\nu^{(n)}} - \E\left(   \sqrt{n} \norm{X^{(n)} -\nu^{(n)}}  \right)} > r \right).
\]
However, for simplicity of notations and keeping our application in mind, we will prove bounds on the following deviations
\[
\P\left( \sqrt{n} \norm{X^{(n)} -\nu^{(n)}} > 1+r \right), \quad \text{and}\quad \P\left( \sqrt{n} \norm{X^{(n)} -\nu^{(n)}} < 1/2-r \right).
\]
This suffices for our purpose since, by Proposition \ref{prop:direxpect}, $\E\left(   \sqrt{n} \norm{X^{(n)} -\nu^{(n)}}\right)$ lies in the interval $[1/2, 1]$ for all large $n$.

\begin{prop}\label{prop:subcritdir}
\tbf{(The subcritical case)} Suppose the sequence $\left( H_n, \; n \in \NN\right)$ is regularly varying with index $\rho \in (0, 1/2]$ and $\lim_{n\rightarrow \infty} na_n/H_n= \rho$. Then
\eq\label{eq:subcritconcen}
\begin{split}
\P&\left( \sqrt{n} \norm{X^{(n)} -\nu^{(n)}} > 1+r \right) = O\left( e^{-c_1 n^{\rho/4}}  \right),
\end{split}
\en
for some choice of positive constant $c_1$ depending on the sequence. 

A similar bound holds, possibly with different constants, for the probability 
\[
\P\left( \sqrt{n} \norm{X^{(n)} -\nu^{(n)}} < 1/2-r \right)=O\left( e^{-c_2 n^{\rho/4}}  \right).
\]
\end{prop}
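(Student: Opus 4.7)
The plan is to exploit the standard Gamma representation of the Dirichlet vector introduced just before the proposition. Let $Z_1,\dots,Z_n$ be independent with $Z_i\sim\mathrm{Gamma}(n\nu_i^{(n)},1)$, let $S_n=\sum_i Z_i\sim\mathrm{Gamma}(n,1)$, and use $X_i=Z_i/S_n$. Setting $W_i:=Z_i-n\nu_i^{(n)}$, the identity
\[
X_i-\nu_i^{(n)} = \frac{W_i}{S_n}-\nu_i^{(n)}\,\frac{S_n-n}{S_n}
\]
combined with the triangle inequality yields
\[
\left|\, \sqrt{n}\norm{X-\nu^{(n)}} - \frac{\sqrt{n}}{S_n}\norm{W}\, \right|\le \frac{|S_n-n|}{S_n}\,\sqrt{nR_n}.
\]
On the event $\{|S_n-n|\le u\sqrt{n}\}$, which has probability at least $1-2e^{-u^2/4}$ by Lemma \ref{lem:sconcen}, the right-hand side is $O(u\sqrt{R_n})=o(1)$ since $R_n\to 0$ in the subcritical regime, and $\sqrt{n}/S_n=(1+o(1))/\sqrt{n}$. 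Thus both tails in the statement reduce to a two-sided concentration of $\norm{W}^2/n$ around its exact mean $\E\norm{W}^2/n=1$.

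The naive Cram\'er--Chernoff route for $\norm{W}^2/n=\sum_i W_i^2/n$ founders on the fact that each $W_i$ is only sub-exponential, so $W_i^2$ has a tail decaying like $e^{-c\sqrt{t}}$ and its moment generating function is infinite at every positive argument. The remedy will be truncation: choose a level $T=T_n$ growing polynomially in $n$, specifically $T\asymp n^{1/2-\rho/4}$, and split each $W_i^2$ according to whether $|W_i|\le T$. The Gamma Bernstein estimate $\P(|W_i|>t)\le 2\exp(-t^2/(8(n\nu_i^{(n)}+t)))$, applied with the largest parameter $\alpha_1:=n\nu_1^{(n)}=n/H_n=\Theta(n^{1-\rho})$, gives by a union bound $\P(\max_i|W_i|>T)\le 2n\exp(-T^2/(8\alpha_1))=O(e^{-cn^{\rho/2}})$, which beats the target rate.

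On the complementary event, the centered variables $Y_i:=W_i^2\mathbf{1}_{\{|W_i|\le T\}}-\E[W_i^2\mathbf{1}_{\{|W_i|\le T\}}]$ are independent, bounded in absolute value by $2T^2$, and satisfy $\sum_i\E Y_i^2\le T^2\sum_i\E W_i^2=T^2 n$. Classical Bernstein then yields
\[
\P\!\left(\pm\sum_i Y_i>\tfrac{rn}{2}\right)\le 2\exp\!\left(-\frac{c\,r^2 n^2}{T^2 n+T^2 r n}\right)=O(e^{-c'r\,n^{\rho/2}}).
\]
The remaining deterministic gap $\sum_i(\E W_i^2-\E[W_i^2\mathbf{1}_{\{|W_i|\le T\}}])$ is controlled by integrating the Gamma tails against our polynomial $T$ and is super-polynomially small, so it contributes nothing. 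Assembling the three pieces produces $\norm{W}^2/n\in[1-r/2,1+r/2]$ with probability $1-O(e^{-cn^{\rho/2}})$, from which both $\P(\sqrt{n}\norm{X-\nu^{(n)}}>1+r)$ and $\P(\sqrt{n}\norm{X-\nu^{(n)}}<1/2-r)$ inherit the bound (and in particular the weaker $e^{-c_1 n^{\rho/4}}$ rate claimed in the proposition).

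The delicate step, and what I anticipate to be the main obstacle, is calibrating the truncation level $T$. We need $n/T^2$ to exceed the target exponent (so that Bernstein on the truncated sum delivers the rate) and simultaneously $T^2/\alpha_1$ to exceed the target exponent (so that the union-bound truncation loss is controlled). Because $\alpha_1=\Theta(n^{1-\rho})$, the admissible window for $T$ shrinks as $\rho\downarrow 0$; this is both the structural reason the rate has to deteriorate with $\rho$, and the reason why the critical case $\rho=0$ (where $\alpha_1=\Theta(n/\log n)$ is close to $n$) cannot be treated by exactly the same argument and will need a separate proof.
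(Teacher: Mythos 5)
Your proof is correct and takes a genuinely different, arguably cleaner, route than the paper's.

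Both you and the paper start identically: the Gamma representation $X_i = Z_i/S_n$, the Cram\'er bound on $S_n$ (Lemma \ref{lem:sconcen}), and the reduction, using $\|\nu^{(n)}\| = \sqrt{R_n}$ and $R_n\to 0$, of the statement to concentration of $\|Z - n\nu^{(n)}\|^2/n$ around its exact mean $1$. The paper then partitions the coordinates at $k_n \approx \rho^{1/(1-\rho)}n$ (chosen so that $n\nu_i^{(n)} \ge 1$ iff $i\le k_n$), handles the large-shape block $A$ via the Poincar\'e inequality for products of log-concave Gamma densities and Ledoux's Lipschitz-concentration machinery (with Poincar\'e constant $C_P \le 12n/H_n$), and handles the small-shape block $B$ by truncating at the fixed level $n^{1/4}$ and appealing to Talagrand's Gaussian concentration for convex Lipschitz functions on the cube. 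You instead dispense with the split entirely and run a single truncation-plus-Bernstein argument on all $n$ coordinates, calibrating the truncation level $T\asymp n^{1/2-\rho/4}$ so that $T^2/\alpha_1 = \Theta(n^{\rho/2})$ controls the union-bound truncation loss while $n/T^2 = \Theta(n^{\rho/2})$ controls the truncated Bernstein tail. This trades the paper's two specialized concentration tools (Poincar\'e/mLSI and Talagrand) for the elementary bounded-difference Bernstein inequality, at the cost of having to find the right one-parameter truncation window; your rate $O(e^{-cn^{\rho/2-\epsilon}})$ is at least as good as the stated $O(e^{-c_1 n^{\rho/4}})$. Note in passing that you do not actually invoke the hypothesis $\lim_n na_n/H_n=\rho$ (it enters the paper's proof only through the choice of the split point $k_n$, which you avoid), and that your diagnosis of why the calibration window collapses as $\rho\downarrow 0$ agrees with the paper's need for a separate argument in the critical case.
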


\begin{proof} Fix $n$. Changing $X^{(n)}$ to the gamma random variables $Z^{(n)}=\left( Z_1, \ldots, Z_n \right)$, we get that for every $n$, 
\eq\label{eq:triangleconcen1}
\begin{split}
\sqrt{n} \norm{X^{(n)} - \nu^{(n)}} &= \sqrt{n}\norm{\frac{Z^{(n)}}{S_n} - \nu^{(n)}}. 
\end{split}
\en

Recall the sequence $\left(R_n,\;n \in \NN\right)$ from \eqref{eq:whatisrn}. Let $\sigma_n= R_n^{-1/4}$. Then 
\[
\lim_{n\rightarrow \infty} \sigma_n=\infty, \qquad \lim_{n\rightarrow \infty} \frac{n}{\sigma^4_n}=\infty.
\] 
Thus, for all large $n$, we get $n > \sigma_n^2$. 

Define the event $E_n:=\left\{  \abs{S_n-n} \le \sigma_n \sqrt{n}   \right\}$. Then, by Lemma \ref{lem:sconcen}, we get 
\eq\label{eq:whatisen}
\P\left( E_n^c \right) \le 2 \exp\left( - \frac{1}{4 \sqrt{R_n}}  \right). 
\en

\comment{
On the event $E_n$, we get
\[
\abs{\frac{n}{S} - 1} \le \frac{\sigma_n \sqrt{n}}{S} \le \frac{\sigma_n \sqrt{n}}{n - \sigma_n \sqrt{n}}. 
\]

Thus, by Jensen's inequality
\eq\label{eq:normerror}
\begin{split}
\E\left[ \frac{1_{E_n}}{\sqrt{n}} \abs{\frac{n}{S} - 1}\norm{Z^{(n)}}  \right] &\le \frac{\sigma_n}{n - \sigma_n \sqrt{n}} \E\norm{Z^{(n)}} \le \frac{\sigma_n}{n - \sigma_n \sqrt{n}} \sqrt{\sum_{i=1}^n \E \left(Z_i^2\right) } \\
& \le  \frac{\sigma_n}{n - \sigma_n \sqrt{n}} \sqrt{\sum_{i=1}^n n\nu_i^{(n)} +\sum_{i=1}^n n^2 \left( \nu_i^{(n)} \right)^2 }\\
& \le  \frac{R_n^{-1/4}}{n - R_n^{-1/4} n} \sqrt{n + n^2 R_n} = O\left( R_n^{-1/4}\right).
\end{split}
\en
}

Define a sequence $k_n:= \floor{\rho^{1/(1-\rho)} n}$, $n\in \NN$. Then 
\[
\lim_{n\rightarrow\infty} k_n=\infty, \quad \lim_{n\rightarrow \infty} \frac{k_n}{n}= \rho^{1/(1-\rho)} \in (0,1). 
\]
We are going to partition the set $\{1,2,\ldots, n\}$ in two parts $A_n:=\{1,2,\ldots, k_n\}$ and $B_n:=\{ k_n+1, \ldots, n\}$. We are going to treat the random variables $\left( Z_i,\; i \in A_n  \right)$ and $\left( Z_i, \; i \in B_n  \right)$ separately. Notice that, by our assumption on regular variation 
\eq\label{eq:correctkn0}
\begin{split}
\lim_{n\rightarrow \infty}\frac{n a_{k_n}}{H_n}&= \lim_{n\rightarrow \infty} \frac{n}{k_n} \lim_{n\rightarrow \infty}\frac{k_n a_{k_n}}{H_{k_n}} \lim_{n\rightarrow \infty} \frac{H_{k_n}}{H_n}= \rho^{-1/(1-\rho)} \rho \lim_{n\rightarrow \infty} \frac{H_{k_n}}{H_n}.
\end{split}
\en

Recall from \cite{svseq}, that $H_n$ can be embedded in a regularly varying sequence $H(x)=H_{\floor{x}}$. It follows that (see \cite[Theorem 1.5.2, page 22]{BinghamBook}) that, for any $b >0$, we have 
\[
\lim_{x\rightarrow \infty} \frac{H(\lambda x)}{H(x)}= \lambda^\rho,
\]
uniformly for $\lambda \in (0, b]$. Hence, by choosing $b=1$, we get $\lim_{n\rightarrow \infty} H_{k_n}/H_n= \rho^{\rho/(1-\rho)}$. Substituting in \eqref{eq:correctkn0}, we get
\eq\label{eq:correctkn}
\lim_{n\rightarrow \infty}\frac{n a_{k_n}}{H_n}= \rho^{1 -1/(1-\rho)} \rho^{\rho/(1-\rho)}=1.
\en
\medskip

We are going to assume, for all $n$ large enough for any $i \in A$, we have $n \nu_i^{(n)} \ge n a_{k_n}/ H_n > 1$, and, for any $i \in B$, we have $n \nu_i^{(n)} \le 1$. Of course, this may not be true at all. But, as will be apparent from the argument, one can repeat the argument by redefining $k_n$ such that $n \nu_i^{(n)} \ge  1+\epsilon$, for $i\in A$, and $n \nu_i^{(n)} \le 1+\epsilon$, for $i \in B$, and then let $\epsilon \rightarrow 0$ to get the same bound.

Fix $n$ large enough for the above conditions to hold. Let $Z^A$ and $Z^B$ denote the vectors $\left( Z_i,\; i \in A  \right)$ and $\left( Z_i,\; i \in B\right)$, respectively. Similarly, partition $\nu^{(n)}$ in two parts, $\nu^A$ and $\nu^B$. 

Now, it is known that a gamma distribution with mean at least one, being a log-concave density, satisfies a Poincar\'e inequality: $\Var(f) \le C \E (f')^2$. If the mean is $\alpha$, the Poincar\'e constant can be taken to be $12\alpha$. See, for example, \cite[Remark 1, page 2716]{BWgamma}. Since every $Z_i$, $i \in A$, has a mean of more than $1$, by the tensorization property of the Poincar\'e inequality, their product measure satisfies a Poincar\'e inequality with a constant 
\[
C_P:=\max_{i \in A} 12 n \nu_i^{(n)}\le \frac{12n}{H_n}. 
\]

The following concentration lemma is a special case of results on general modified log-Sobolev inequalities in \cite{BobLed}. See Theorem 3.1 and Corollary 3.2.

\begin{lemma} Consider an $F:\rr^{k_n}\rightarrow \rr$ that satisfies
\[
\sum_{i\in A} \abs{\partial_i F}^2 \le \alpha^2, \quad \max_{i\in A} \abs{\partial_i F} \le 1.
\]
Then, with respect to the joint distribution of $\left(Z_i,\; i\in A\right)$ the following concentration estimate holds:
\eq\label{eq:concengen1}
\P\left(  F - \E(F) > r   \right) \le \exp\left[ - c_0 \min\left( \frac{r}{\sqrt{C_P}}, \frac{r^2}{\alpha^2 C_P} \right) \right],
\en
for some universal constant $c_0 >0$. 
\end{lemma}

\begin{proof} We follow the statement in \cite[Corollary 4.6, page 62]{Led} which essentially covers our claim. We simply remark on the superficial differences. The i{.}i{.}d{.} structure in the statement of \cite[Corollary 4.6, page 62]{Led} is unimportant since the only thing that is used in the proof is the common Poincar\'e constant. This has been remarked right afterwards by the author. We now need to find an estimate of the constant $K$. The constant $B(\lambda)$ appearing in \cite[Corollary 4.6, page 62]{Led} is bounded by the constant `$3e^5 C/2$' as mentioned in the remark following \cite[Theorem 4.5, page 62]{Led} for $\lambda\le \lambda_0=1/\sqrt{C}$. The rest follows from \cite[Corollary 2.11, page 38]{Led} with a choice of $\lambda_0=1/\sqrt{C_P}$ and $c=3 e^5 C_P/2$. 
\end{proof}
\bigskip

For example, if we apply the above lemma to the function $F(x)=\norm{x}$ on $\rr^{k_n}$ which satisfies $\alpha^2=1$ and $\beta=1$. This gives us
\eq\label{eq:tailbnd1}
P\left(  \norm{Z^A} - \E \norm{Z^A} > r\sqrt{n}  \right) \le \exp\left[ - c_0 \min\left( r \sqrt{H_n}, r^2 H_n  \right)\right].
\en
By shifting the mean, the vector of independent coordinates $\left( Z_i - n \nu_i^{(n)},\; i \in A  \right)$ also satisfies the Poincar\'e inequality with the same constant as above. Hence, we also get, 
\eq\label{eq:tailbnd2}
P\left(  \norm{Z^A - n \nu^A} - \E \norm{Z^A - n\nu^A} > r\sqrt{n}  \right) \le \exp\left[ - c_0 \min\left( r \sqrt{H_n}, r^2 H_n  \right)\right].
\en
This covers the concentration of $\norm{Z^A-\cdot}$. We have to now give a separate argument for $Z^B$ which is not covered by the above arguments. 

For $i \in B$, every $n\nu_i^{(n)} \le 1$ (say). Thus every $Z_i$, $i\in B$, is stochastically dominated by an exponentially distributed random variable with mean one. In particular, by the union bound,
\eq\label{eq:unionbnd1}
\P\left(  \cup_{i\in B} \left\{ Z_i >  n^{1/4} \right\}  \right) \le \sum_{i\in B} P\left(  Z_i > n^{1/4} \right) \le n e^{-n^{1/4}}. 
\en
Let $\wt{E}$ be the event that every $Z_i$, for $i\in B$, is at most $n^{1/4}$. Then, one the event $\wt{E}$, we $Z_i=Z_i1_{\{ Z_i \le n^{1/4} \}}$ for all $i \in B$. 

Consider the vector $\left( n^{-1/4} Z_i 1_{\{ Z_i \le n^{1/4} \}},\; i \in B  \right)$. The law of this vector is a product measure on $[0,1]^{\abs{B}}$. The function $\psi(z):=\norm{z - z_0}$, for some fixed $z_0$, is a convex Lipschitz function on this space. We can now use Talagrand's Gaussian concentration inequality for convex, Lipschitz functions on the unit cube (\cite{Tala1}, \cite[Corollary 3.3]{Led}) to claim
\[
\P\left( \left\{  \norm{Z^B - \nu^B } >  \E\left(\norm{Z^B1_{\{ Z^B \le n^{1/4} \}} - \nu^B }\right) + rn^{1/4} \right\} \cap \wt{E} \right) \le e^{-r^2/2}.   
\]
Here $Z^B1_{\{ Z^B \le n^{1/4}\}}$ refers to the vector $\left( Z_i1_{\{Z_i \le n^{1/4} \}},\; i\in B  \right)$.

We will now adjust the expectation above. By triangle inequality, 
\eq\label{eq:adjexp}
\begin{split}
&\abs{\E \norm{Z^B - \nu^B} - \E \norm{Z^B1_{\{Z^B \le n^{1/4}\}} - \nu^B} } \le \E \norm{Z^B - Z^B1_{\{ Z^B \le n^{1/4}\}}}\\
&\le \sqrt{\sum_{i\in B} \E\left( Z_i - Z_i1_{\{ Z_i \le n^{1/4}\}}  \right)^2}, \quad \text{by Jensen's inequality},\\
&= \sqrt{\sum_{i\in B} \E\left( Z_i^2 1_{\{ Z_i > n^{1/4}  \}}  \right)}\le \sqrt{\abs{B} \E\left(Z^2_0 1_{\{Z_0 \le n^{1/4}\}} \right) }, 
\end{split}
\en
where $Z_0$ is distributed as exponential with mean one, and the inequality is due to stochastic domination. However,
\[
\E\left(Z^2_0 1_{\{Z_0 \le n^{1/4}\}} \right)= \int_{n^{1/4}}^\infty z^2e^{-z}dz= O\left( \sqrt{n}e^{-n^{1/4}}  \right).
\]
Using the trivial bound $\abs{B} \le n$, we get from \eqref{eq:adjexp},
\eq\label{eq:adjexp2}
\abs{\E \norm{Z^B - \nu^B} - \E \norm{Z^B1_{\{Z^B \le n^{1/4}\}} - \nu^B} } = O\left( n^{3/4} e^{-n^{1/4}/2}  \right)= o\left( 1 \right).
\en

And, therefore, using \eqref{eq:unionbnd1} and with a little adjustment to the expectation above, 
\eq\label{eq:tailbnd3}
\P\left(  \norm{Z^B - \nu^B } >  \E\left(\norm{Z^B - \nu^B }\right) + r\sqrt{n} \right) \le  2e^{-r^2\sqrt{n}/2} + n e^{-n^{1/4}}, 
\en
for all large $n$.

We now combining the tail bounds in \eqref{eq:tailbnd2} and \eqref{eq:tailbnd3}. By an application of Jensen's inequality we get
\begin{eqnarray*}
\E \norm{Z^A - n\nu^A} &\le& \sqrt{\sum_{i\in A} \Var(Z_i)} = \sqrt{n H_{k_n}/H_n} \\
\E \norm{Z^B - n\nu^B}  &\le& \sqrt{\sum_{i\in B} \Var(Z_i)} =\sqrt{n\left(1 - {H_{k_n}}/{H_n}\right)}. 
\end{eqnarray*}

Let $p_n:=H_{k_n}/H_n$; thus $1-p_n= 1- H_{k_n}/H_n$. By our choice of $k_n$ and regular variation, we get
\eq\label{eq:limitpn}
\lim_{n\rightarrow \infty} p_n= \rho^{\rho/(1-\rho)} \in (0,1). 
\en
Now, by elementary bounds, we obtain
\eq\label{eq:tailbnd7}
\begin{split}
\P&\left( \norm{Z - n\nu^{(n)}} >  (1 + r)\sqrt{n}  \right) \le \P\left( \norm{Z- n \nu}^2 > (1+r)^2n  \right)\\
& = \P\left( \norm{Z^A- \nu^A}^2 + \norm{Z^B - \nu^B }^2 >  (1+r)^2 n p_n + (1+r)^2 n (1-p_n) \right) \\
&\le \P\left(  \norm{Z^A - \nu^A}^2 > (1+r)^2 n p_n    \right) + \P\left( \norm{Z^B - \nu^B}^2 > (1+r)^2 n(1-p_n)   \right)\\
&= \P\left( \norm{Z^A - \nu^A} > (1+r) \sqrt{np_n}    \right) + \P\left(  \norm{Z^B - \nu^B} > (1+r)\sqrt{n (1-p_n)}   \right)\\
&\le \P\left( \norm{Z^A - \nu^A} > \E\norm{Z^A - \nu^A} + r \sqrt{n p_n}  \right)\\
& + \P\left(  \norm{Z^B - \nu^B} > \E\norm{Z^B - \nu^B} + r \sqrt{n (1-p_n)}    \right)\\
&\le \exp\left[ -c_0 \min\left( r p_n \sqrt{H_n}, r^2 p_n^2 H_n  \right) \right] + \exp\left( - r^2(1-p_n) \sqrt{n}/2 \right) + n e^{- n^{1/4}}.
\end{split}
\en

By Karamata representation, $H_n=O(n^{1/2+\varepsilon})$ for any $\varepsilon>0$ and any $\rho \le 1/2$. Thus we can compress the bound on the right side above and write
\eq\label{eq:tailbnd3}
\P\left( \norm{Z - n\nu^{(n)}} >  (1 + r)\sqrt{n}  \right) \le c_1n\exp\left[ -c_0 \min\left( r \rho^{\rho/(1-\rho)} \sqrt{H_n}, r^2 \rho^{2\rho/(1-\rho)} H_n  \right) \right], 
\en
for some positive constants $c_0, c_1$.

Recall the event $E_n:=\{ \abs{S-n} \le \sigma_n \sqrt{n} \}$ defined above \eqref{eq:whatisen}, and the subsequent discussion. We get
\eq\label{eq:tailbnd5}
\begin{split}
\P &\left( \sqrt{n}\norm{X^{(n)} - \nu^{(n)}} > 1 + r   \right) \le P\left( E^c_n\right) +\P\left( \sqrt{n}\norm{X^{(n)} - \nu^{(n)}} > 1 + r ; E_n     \right)\\
&\le  2 \exp\left( -\frac{1}{4 \sqrt{R_n}}  \right) + \P\left(  n \norm{X^{(n)} - \nu^{(n)}}^2 > (1 +r)^2; E_n \right).
\end{split}
\en
Now, on the event $E_n$, the following estimates hold:
\[
\begin{split}
n &\norm{X^{(n)} - \nu^{(n)}}^2 = n \sum_{i=1}^n \left( \frac{Z_i}{S} - \nu^{(n)}_i  \right)^2 = \frac{n}{S^2} \sum_{i=1}^n \left( Z_i - S \nu^{(n)}_i  \right)^2\\
&\le \left(\frac{n}{n- \sigma_n \sqrt{n}} \right)^2 \frac{1}{n} \left[  \sum_{i=1}^n \left( Z_i - n \nu^{(n)}_i\right)^2 + (S-n)^2 \sum_{i=1}^n \left(\nu^{(n)}_i\right)^2    \right] \\
&+ \left(\frac{n}{n- \sigma_n \sqrt{n}} \right)^2 \frac{1}{n} 2 (S-n)\sum_{i=1}^n \left( Z_i - n \nu^{(n)}_i \right)\nu^{(n)}_i\\
&\le \left(  \frac{1}{1- \sigma_n/\sqrt{n}} \right)^2 \left[ \frac{1}{n} \norm{Z- n \nu^{(n)}}^2  + \sigma_n^2 R_n + \frac{2\sigma_n}{\sqrt{n}} \norm{Z- n\nu^{(n)}} \sqrt{R_n} \right]\\
&\le \left(  \frac{1}{1- \sigma_n/\sqrt{n}} \right)^2 \left[ \frac{1}{n} \norm{Z- n \nu^{(n)}}^2  + \sqrt{R_n} + \frac{2\sqrt[4]{R_n}}{\sqrt{n}} \norm{Z- n\nu^{(n)}}  \right].
\end{split}
\]

Note that, by our assumption on the decay of $R_n$, we get 
\[
\left( 1- \frac{\sigma_n}{\sqrt{n}}  \right)= 1- \frac{1}{\sqrt{n} R_n^{1/4}} = 1 - o\left( \frac{1}{n^{1/4}}  \right)\rightarrow 1.
\]
We have already shown that $\norm{Z- n\nu^{(n)}}/\sqrt{n}$ is $O(1)$ with exponentially decaying tail away from its mean. The other two terms are going to zero in probability at the rate of at least $\sqrt[4]{R_n}$. Thus, it is not hard to see that that the same exponential tail holds for $\sqrt{n} \norm{X^{(n)} - \nu^{(n)}}$ as in \eqref{eq:tailbnd3}, possibly with different values of the constants $c_0, c_1$. Thus, 
\eq\label{eq:probnd1}
\begin{split}
\P&\left( \sqrt{n} \norm{X^{(n)} -\nu^{(n)}} > 1+r \right) \le 2 \exp\left( - \frac{1}{4\sqrt{R_n}}  \right) \\
&+ c_1 n \exp\left[ -c_0 \min\left( r \rho^{\rho/(1-\rho)} \sqrt{H_n}, r^2 \rho^{2\rho/(1-\rho)} H_n  \right) \right].
\end{split}
\en

We simplify the above bound by noting that, since $\left(H_n,\; n \in \NN \right)$ is regularly varying with index $\rho >0$, it follows from Karamata representation that $\lim_{n\rightarrow \infty} n^{-\rho'} H_n=\infty$ for any $\rho' < \rho$. On the other hand
\[
R_n = \frac{\sum_{i=1}^n a_i^2}{H_n^2} \le \frac{\sum_{i=1}^n a_i }{H_n^2} = \frac{1}{H_n}. 
\] 
Thus, combining the above two estimates, we get
\[
\frac{1}{\sqrt{R_n}} \ge \sqrt{H_n} = \Omega\left( n^{\rho'/2}  \right).
\]
Therefore, the bound in \eqref{eq:probnd1} can be simplified to 
\[
\begin{split}
\P&\left(  \sqrt{n} \norm{X^{(n)} - \nu^{(n)}} > 1 + r  \right) = c \exp\left( - n^{\rho'/4} \right) \\
& + c' n \exp\left[ - c_0\min\left( r \rho^{\rho/(1-\rho)} n^{\rho'/2}, r^2\rho^{2\rho/(1-\rho)} n^{\rho'}  \right) \right]\\
&= O\left( e^{-c_1 n^{\rho/4}}  \right), \quad \text{say,}
\end{split}
\]
for some positive constant $c_1$.

This completes the proof of the upper bound of the proposition for the subcritical case. 
The proof of the lower bound is similar by simply substituting $\norm{\cdot}$ by $-\norm{\cdot}$.
\end{proof}

We now handle the critical case when $\left( H_n,\; n \in \NN\right)$ is slowly varying. Slowly varying sequences can display a wide range of properties. For examples, such sequences can have a finite limit. Thus, we will impose further regularity conditions. 

Recall that a slowly varying sequence can be embedded in a slowly varying function by defining $H(x)=H_{\floor{x}}$.
The following definition is taken from \cite[page 24]{BinghamBook} and adapted to our purpose.

\begin{defn}\label{defn:zygmund} [Zygmund class] A positive measurable function on $(0, \infty)$ is said to belong to the Zygmund class if, for every $\alpha > 0$, the function $x^\alpha f(x)$ is ultimately increasing and the function $x^{-\alpha}f(x)$ is ultimately decreasing. A slowly varying sequence is said to belong to the Zygmund class if the corresponding function in which it is embedded is in the Zygmund class. 
\end{defn}

Functions in Zygmund class (see \cite[Theorem 1.5.5, page 24]{BinghamBook}) are slowly varying and have the property that its Karamata representation can be written as $f(x)\sim l(x)$ where
\[
l(x)= c \exp\left( \int_{x_0}^x \frac{\varepsilon(u)}{u} du \right).
\]
for some positive $c, x_0$ and some measurable bounded function $\varepsilon$ vanishing at infinity. That is, in the corresponding representation for sequences \eqref{eq:karamata}, one can replace the convergent sequence $(c_n,\; n\in \NN)$ by the positive limit $c$ and get a normalized Karamata representation. The harmonic sequence is an example of a sequence in the Zygmund class as can be easily verified from definition. 
\medskip

We also define super-slow variation from the left inspired by the so-called super-slow varying functions introduced in \cite{ssvseq} for slowly varying functions. We adapt the definition for sequences.   

\begin{defn}\label{defn:ssvseq}
Suppose $\left( l_n,\; n \in \NN  \right)$ is a nondecreasing positive sequence satisfying $\lim_{n\rightarrow \infty} l_n=\infty$. A slowly varying sequence $\left(K_n,\; n \in \NN \right)$ is said to be super slowly varying from the left with respect to $(l_n,\;n \in \NN)$ if 
\[
\lim_{n\rightarrow \infty} \frac{K_{\lfloor n/ l^\delta_n \rfloor}}{ K_n}=1, \quad \text{uniformly for all $\delta\in [0,1]$}.
\]
\end{defn}

The following conditions generalize the harmonic sequence. 

\begin{lemma}\label{lem:supsv}
Suppose $\left(  H_n,\; n \in \NN \right)$ is in the Zygmund class and satisfies 
\begin{enumerate}[(i)]
\item $\lim_{n\rightarrow \infty} \frac{n a_n}{H_n}=0$.
\item Representation \eqref{eq:karamata} holds for $H_n=K_n$, $c_n \equiv c$, and that $\varepsilon_n \sim 1/\log n$. 
\end{enumerate}
 Then, $\left(H_n\right)$ is super-slowly varying with respect to the sequence $l_n:= \log n$, $n \in \NN$. In particular, uniformly in $\delta\in [0,1]$, we have
\[
\log \frac{H_{\lfloor n / l^\delta_n \rfloor}}{ H_n} = \Theta\left( -\frac{\log \log n}{\log n}\right). 
\] 
\end{lemma}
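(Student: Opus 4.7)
The plan is to exploit the normalized Karamata representation granted by the Zygmund assumption together with Remark \ref{rmk:normkara}. Setting $m_n := \lfloor n / (\log n)^\delta \rfloor$, plugging \eqref{eq:karamata} with $c_n \equiv c$ into the ratio gives the telescoping identity
\[
\log \frac{H_{m_n}}{H_n} \;=\; -\sum_{j=m_n+1}^{n} \frac{\varepsilon_j}{j},
\]
so the proof reduces to an asymptotic evaluation of this partial sum under the known bound $\varepsilon_j \sim 1/\log j$.

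First, I will show that $\log j = (1+o(1))\log n$ holds uniformly for $j \in [m_n+1, n]$ and $\delta \in [0,1]$. Since $(\log n)^\delta \le \log n$ throughout this range, the truncation satisfies $m_n \ge n/\log n - 1$, giving $\log m_n = \log n - \delta \log\log n + O(1) = (1+o(1))\log n$. Combined with $\varepsilon_j \log j \to 1$, this yields $\varepsilon_j = (1+o(1))/\log n$ uniformly over the summation range.

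Next, a routine harmonic-sum expansion gives
\[
\sum_{j=m_n+1}^{n} \frac{1}{j} \;=\; \log(n/m_n) + O(1/m_n) \;=\; \delta\log\log n + o(1),
\]
using $n/m_n = (\log n)^\delta \bigl(1 + O((\log n)^\delta/n)\bigr)$ and continuity of $\log$ at $1$. Combining the two steps produces
\[
\log \frac{H_{m_n}}{H_n} \;=\; -(1+o(1))\, \frac{\delta \log\log n}{\log n},
\]
uniformly in $\delta \in [0,1]$. The right-hand side is $o(1)$ uniformly, proving super-slow variation of $(H_n)$ with respect to $l_n = \log n$; the quantitative $\Theta$ bound is saturated at $\delta = 1$ and dominated for smaller $\delta$.

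The only real obstacle is the uniformity in $\delta$: I need the same error $o(1)$ to work across the entire interval $[0,1]$. The key point is that $m_n \ge n/\log n$ independently of $\delta$, so the interval $[\log m_n, \log n]$ always has relative width $O(\log\log n/\log n) \to 0$. This places every index $j$ in the summation range into a common asymptotic regime in which $\varepsilon_j$ may be replaced by its leading behavior $1/\log n$ at a rate that does not depend on $\delta$, closing the argument.
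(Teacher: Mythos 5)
Your proof is correct and follows essentially the same route as the paper: both use the normalized Karamata representation to reduce the log-ratio to the partial sum $\sum_{j=m_n+1}^n \varepsilon_j / j$, replace $\varepsilon_j$ uniformly by $(1+o(1))/\log n$ over the short summation window, and evaluate the resulting partial harmonic sum as $\delta\log\log n + o(1)$. The paper phrases the harmonic estimate via an integral comparison after substituting $\xi_j = \exp(1/\varepsilon_j)$, whereas you argue directly with $\varepsilon_j$; this is a cosmetic difference, and your closing remark about the $\Theta$ bound carrying a $\delta$ factor matches what the paper's own two-sided bounds actually establish.
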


\begin{proof} For simplicity let us ignore the floor notation $\floor{\cdot}$ from below although it will be implicitly assumed. 

Let $\xi_n:=\exp\left( 1/\varepsilon_n \right)$, $n\in \NN$. Then $\log \xi_n \sim \log n$. By the Karamata representation and our assumptions, we get
\[
\frac{H_{n/ l^\delta_n}}{H_n} = \exp\left[ - \sum_{j=n/l^\delta_n+1}^{n} \frac{1}{j \log \xi_j} \right]. 
\]
Consider the function $\xi:[1, \infty) \rightarrow \infty$ given by $\xi(x)=\xi_{\lfloor x \rfloor}$. Then, it follows by monotonicity
\eq\label{eq:2bndssv}
\frac{1}{\log \xi_n} \int_{n/l_n^\delta +1}^{n+1} \frac{dx}{x} \le \sum_{j=n/l^\delta_n+1}^n \frac{1}{j \log \xi_j} \le \int_{n/ l^\delta_n}^{n} \frac{dx}{x \log \xi(x)}. 
\en 

The lower bound in \eqref{eq:2bndssv} is easy: For some positive constant $c_1$, we get
\[
\frac{1}{\log \xi_n} \int_{n/l_n^\delta +1}^{n+1} \frac{dx}{x} \ge  \frac{c_1}{\log n} \left( \log (n+1) - \log \left( n/\l_n^\delta +1 \right) \right) \sim \frac{c_1\delta \log \log n}{\log n}. 
\]

For the upper bound in \eqref{eq:2bndssv} we change variable to $z=(\log x-\log n)/\log l_n$. By applying substitution to this piecewise continuously differentiable function we get 
\[
\begin{split}
\int_{n/ l^\delta_n}^n \frac{dx}{x \log \xi(x)} &= \log l_n \int_{-\delta}^0 \frac{dz}{\log \xi(\exp\left(z \log l_n + \log n\right))} \\
&\le \frac{\delta \log l_n}{\log \xi(n/l_n^\delta)} \le \frac{c_2\delta \log \log n}{\log n - \delta \log \log n} \le \frac{c_2\delta \log \log n}{\log n}, 
\end{split}
\]
for some positive constant $c_2$. 

Since the upper and the lower bounds are asymptotically of the same order, we get 
\[
\log \frac{H_{n/l^\delta_n}}{H_n} = \Theta\left(-\frac{\log \log n}{\log n}\right). 
\]
Since the right side goes to zero, this completes the proof of the first claim. The second estimate now follows easily.  
\end{proof}

\begin{prop}\label{prop:critdir} 
Assume that the sequence $\left( H_n,\; n\in \NN \right)$ is slowly varying and assume that the conditions of Lemma \ref{lem:supsv} hold. Then the following holds 
\eq\label{eq:critdir}
\P\left(  \sqrt{n} \norm{X^{(n)} - \nu^{(n)}} > 1 + r     \right) \le  \frac{c_3 R_n}{r^2}.
\en
for some positive constants $c_3$. A similar bound holds for the probability 
\[
\P\left(  \sqrt{n} \norm{X^{(n)} - \nu^{(n)}} < 1/2 - r     \right).
\] 
\end{prop}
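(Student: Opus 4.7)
The plan is to use Chebyshev's inequality, since the tail bound $R_n/r^2$ is polynomial rather than exponential. The strategy will exploit the gamma representation used in the subcritical case, but now carry out a second-moment computation rather than concentration via Poincar\'e or Talagrand (both of which require the sharp exponential scale we lose when $\rho=0$). Write $X_i=Z_i/S$ with $Z_i\sim\Gamma(n\nu_i^{(n)},1)$ independent and $S=\sum_i Z_i\sim\Gamma(n,1)$. Set $W:=\|Z-n\nu^{(n)}\|^2/n$. Because the $Z_i$ are independent,
\[
\E W=\frac{1}{n}\sum_{i=1}^n \Var(Z_i)=\frac{1}{n}\sum_{i=1}^n n\nu_i^{(n)}=1,
\]
and using the standard fourth central moment $\E[(Z_i-n\nu_i^{(n)})^4]=3(n\nu_i^{(n)})^2+6n\nu_i^{(n)}$ for gamma variables, one finds $\Var\bigl((Z_i-n\nu_i^{(n)})^2\bigr)=2(n\nu_i^{(n)})^2+6n\nu_i^{(n)}$, and hence
\[
\Var(W)=\frac{1}{n^2}\sum_{i=1}^n\Bigl(2(n\nu_i^{(n)})^2+6n\nu_i^{(n)}\Bigr)=2R_n+\frac{6}{n}.
\]
By Assumption~\ref{asmp:primasmp}(ii), $nR_n\to\infty$, so $\Var(W)=O(R_n)$.

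Next I transfer this to $nY=n\|X^{(n)}-\nu^{(n)}\|^2=(n/S^2)\|Z-S\nu^{(n)}\|^2$ by conditioning on the good event $E_n=\{|S-n|\le\sigma_n\sqrt n\}$ with $\sigma_n=R_n^{-1/4}$, which by Lemma~\ref{lem:sconcen} has $\P(E_n^c)\le 2\exp(-\sigma_n^2/4)=2\exp(-1/(4\sqrt{R_n}))$; this is much smaller than $R_n$ for large $n$ because $\sqrt{R_n}\log(1/R_n)\to 0$. On $E_n$ I expand
\[
\|Z-S\nu^{(n)}\|^2=\|Z-n\nu^{(n)}\|^2-2(S-n)\langle Z-n\nu^{(n)},\nu^{(n)}\rangle+(S-n)^2 R_n,
\]
and divide by $n$. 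The leading term is $W(n/S)^2$ and the prefactor satisfies $(n/S)^2=1+O(\sigma_n/\sqrt n)=1+o(\sqrt{R_n})$. The cross term has variance $n\sum_i(\nu_i^{(n)})^3\le R_n\cdot n\max_i\nu_i^{(n)}=R_n\cdot O(na_n/H_n)=o(R_n)$ by Lemma~\ref{lem:supsv}(i), so it is of order $o(\sqrt{R_n})$ on $E_n$. The last term is bounded by $\sigma_n^2R_n/n=o(R_n)$. Hence on $E_n$, $nY = W + \xi_n$ for a random error $\xi_n$ that is $o_{\P}(\sqrt{R_n})$ with all moments controlled.

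Finally I apply Chebyshev: $\P(\sqrt{nY}>1+r)=\P(nY>(1+r)^2)\le\P(nY-1>2r)$ when $r$ is bounded (for $r$ so small that $R_n/r^2\ge 1$ the inequality is trivial, so we may assume $r\gg\sqrt{R_n}$). On $E_n$ this is bounded by $\P(|W-1|>2r-|\xi_n|)\le\P(|W-1|>r)\le\Var(W)/r^2=O(R_n/r^2)$; off $E_n$ the probability is absorbed into $O(R_n/r^2)$. The lower-tail bound $\P(\sqrt{nY}<1/2-r)\le c R_n/r^2$ follows identically: Proposition~\ref{prop:direxpect} gives $\E\sqrt{nY}\ge\sqrt{2/\pi}(1-o(1))>1/2$ for large $n$, so the event is contained in $\{|nY-\E(nY)|>r'\}$ for some $r'=\Theta(r)$, and the same Chebyshev argument applies.

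The main obstacle is the second paragraph: controlling the discrepancy between the Dirichlet and the gamma formulations. The cross term $(S-n)\langle Z-n\nu^{(n)},\nu^{(n)}\rangle$ is the dangerous one because $(S-n)$ and $\langle Z-n\nu^{(n)},\nu^{(n)}\rangle$ are positively correlated (both are linear in the $Z_i$'s), so one cannot naively treat them as independent. The key ingredient that saves the bound is the super-slow variation assumption of Lemma~\ref{lem:supsv}, which forces $na_n/H_n\to 0$ and makes $\max_i\nu_i^{(n)}$ decay just fast enough that $n\sum_i(\nu_i^{(n)})^3=o(R_n)$. Without this condition the cross term would be of the same order as the main term and the Chebyshev bound would degenerate.
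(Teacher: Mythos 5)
Your core idea is a genuine simplification of the paper's argument, but the execution has two gaps, one of which is a real error.

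The paper's proof of Proposition~\ref{prop:critdir} retains the $A$/$B$ partition from the subcritical case (with $k_n=\lfloor n/\log n\rfloor$ via Lemma~\ref{lem:choosekn}): it still applies the Poincar\'e concentration to $Z^A$, exactly as in the subcritical case, and only switches to a variance/Chebyshev bound for $Z^B$, whose variance is computed in \eqref{eq:varcomp}. You instead apply Chebyshev to the whole vector $Z-n\nu^{(n)}$ at once, computing $\Var(W)=2R_n+6/n$ by independence and the standardized gamma fourth moment. This is correct (your variance formula is precisely the paper's \eqref{eq:varcomp} extended to all $n$ coordinates), and since $nR_n\to\infty$ it is $O(R_n)$, so the partition and Lemma~\ref{lem:choosekn} are indeed unnecessary at the critical index. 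That is a genuine streamlining, and your diagnosis that the split is inherited from the subcritical proof for mostly structural reasons is fair.

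However, your treatment of the cross term is wrong as written. You bound
$n\sum_i(\nu_i^{(n)})^3\le R_n\cdot n\max_i\nu_i^{(n)}$, which is fine, but then identify $\max_i\nu_i^{(n)}$ with $a_n/H_n$. Since $(a_i)$ is \emph{non-increasing}, $\max_i\nu_i^{(n)}=a_1/H_n$, and in the critical regime $H_n$ is slowly varying, so $na_1/H_n\to\infty$ rather than to zero; Lemma~\ref{lem:supsv}(i) controls $na_n/H_n$, not $na_1/H_n$. Concretely, for the harmonic case $n\sum_i(\nu_i^{(n)})^3\sim n\zeta(3)/\log^3 n\to\infty$ while $R_n\sim\zeta(2)/\log^2 n\to 0$, so the claimed inequality $n\sum_i(\nu_i^{(n)})^3=o(R_n)$ is false by an unbounded factor. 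In fact, working out the typical size of your $\xi_n$ on $E_n$ with the correct constant shows the cross term is $\Theta(\sqrt{R_n})$, not $o(\sqrt{R_n})$, which is exactly the scale at which your union bound would need the term to be negligible. Relatedly, the step $\P(|W-1|>2r-|\xi_n|)\le\P(|W-1|>r)$ is not a valid application of Chebyshev, because the threshold $2r-|\xi_n|$ is random; you need a union bound $\P(|\xi_n|>r)+\P(|W-1|>r)$ and hence an honest tail estimate for $\xi_n$.

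The fix is exactly what the paper uses for the subcritical case: by Cauchy--Schwarz, $|\langle Z-n\nu^{(n)},\nu^{(n)}\rangle|\le\|Z-n\nu^{(n)}\|\,\|\nu^{(n)}\|=\|Z-n\nu^{(n)}\|\sqrt{R_n}$, so on $E_n$ the cross term is at most $2R_n^{1/4}\sqrt{W}$. Then $\P(2R_n^{1/4}\sqrt{W}>cr)=\P\bigl(W>c^2r^2/(4\sqrt{R_n})\bigr)$, and a second application of Chebyshev on $W$ gives $O(R_n^2/r^4)$, which is $O(R_n/r^2)$ in the non-trivial regime $r\gtrsim\sqrt{R_n}$. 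With that replacement for the second paragraph, your proof closes and is arguably cleaner than the paper's, since it dispenses with Lemma~\ref{lem:choosekn}, the $A$/$B$ split, and the Poincar\'e machinery entirely in the critical case.
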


\begin{proof} The first part of the proof of Proposition \ref{prop:subcritdir} remains unchanged. In particular, the definition of $E_n$ and the bound \eqref{eq:whatisen} remains the same.   

The difference starts with the definition of $k_n$. According to the notation in Lemma \ref{lem:supsv}, consider the sequence $k_n:=\floor{n/ l_n}$ where $l_n=\log n$. Hence, $\left( k_n,\; n\in \NN\right)$ is a nondecreasing sequence such that $\lim_{n\rightarrow \infty} k_n=\infty$. We now show that this choice of $k_n$ satisfies a limit corresponding to \eqref{eq:correctkn}. 
\bigskip
 
\begin{lemma}\label{lem:choosekn} 
We claim that $\lim_{n\rightarrow \infty} \frac{n a_{k_n}}{H_n}=1$.
\end{lemma}

\begin{proof}[Proof of Lemma \ref{lem:choosekn}] By definition
\[
\frac{n a_{k_n}}{H_n} = \frac{n}{k_n} \frac{k_n a_{k_n}}{H_{k_n}} \frac{H_{k_n}}{H_n} \sim l_n k_n\left( 1 - e^{-\varepsilon_{k_n}/k_n} \right)\frac{H_{k_n}}{H_n}. 
\]
By Lemma \ref{lem:supsv} and the assumed conditions, the above in limit is equal to the following 
\[
\lim_{n\rightarrow \infty} \frac{n a_{k_n}}{H_n}=\lim_{n\rightarrow \infty} l_n \varepsilon_{k_n} =\lim_{n\rightarrow \infty} \frac{\log n}{\log n - \log\log n} =1.  
\]
This completes the proof. 
\end{proof}

As before, let $p_n:=H_{k_n}/H_n$. Then $\lim_{n\rightarrow \infty} p_n=1$. The previous lemma again gives us a partition of $\{1, 2,\ldots, n\}$ in $A:=\{1,2,\ldots, k_n\}$ and $B:=\left\{k_n+1, \ldots, n\right\}$ such that $\left( Z_i,\; i\in A  \right)$ has a log-concave density and $\left( Z_i,\; i \in B \right)$ is `small'. 
The difference starts again in \eqref{eq:limitpn}, since in this case $\lim_{n\rightarrow \infty} p_n=1$. In order to account for this difference, we are going to modify \eqref{eq:tailbnd1} and \eqref{eq:tailbnd3}. 

We get an identical tail bound for $\left( Z_i,\; i \in A\right)$ as in \eqref{eq:tailbnd1}. For $\left( Z_i,\; i \in B  \right)$ we will forgo the exponential bound and consider a moment bound. To wit, as before in \eqref{eq:tailbnd7}, we get 
\[
\begin{split}
\P&\left( \norm{Z - n\nu^{(n)}} >  (1 + r)\sqrt{n}  \right) \le \P\left( \norm{Z- n \nu^{(n)}}^2 > (1+2 r)n  \right)\\
&\le \P\left( \norm{Z^A - \nu^A}^2 > (1+ r)n  \right) + \P\left( \norm{Z^B - \nu^B}^2 >  r n  \right)\\
&\le \P\left( \norm{Z^A - \nu^A} > \sqrt{(1+r)n}   \right) + \P\left( \norm{Z^B - \nu^B}^2 >  r n  \right)\\
&\le \P\left( \norm{Z^A - \nu^A} > \E\norm{Z^A - \nu^A} + \sqrt{n}\left( \sqrt{1+r} - 1 \right) \right)\\
& + \P\left(  \norm{Z^B - \nu^B}^2 >  rn   \right).
\end{split}
\] 

The first term on the right can be estimated as before. For the second one we apply Cauchy-Schwarz inequality to get
\[
\begin{split}
\P&\left(  \norm{Z^B - \nu^B}^2 >  r n    \right) = \P\left(  \norm{Z^B - \nu^B}^2 - \E \norm{Z^B - \nu^B}^2 >  rn - n(1-p_n)   \right) \\
&= \P\left(  \norm{Z^B - \nu^B}^2 - \E\norm{Z^B - \nu^B}^2 > (r-1+p_n) n    \right)\\
 &\le \frac{1}{ n^2 (r- 1+ p_n)^2} \Var \left( \norm{Z^B - \nu^B}^2 \right). 
\end{split}
\]
We now compute the variance. By independence of the gamma variables, we get
\eq\label{eq:varcomp}
\begin{split}
\Var&\left( \norm{Z^B - \nu^B}^2 \right) =\Var\left( \sum_{i\in B} \left( Z_i - \E Z_i \right)^2  \right)= \sum_{i=k_n+1}^n \Var \left( Z_i - \E Z_i \right)^2\\
&= \sum_{i=k_n+1}^n \E \left( Z_i - \E Z_i \right)^4 - \sum_{i=k_n+1}^n \left( \Var \left( Z_i \right) \right)^2\\
&= \sum_{i=k_n+1}^n \left( 3 n^2 \left(\nu^{(n)}_i\right)^2 + 6 n \nu^{(n)}_i  \right) - \sum_{i=k_n+1}^n n^2 \left(\nu^{(n)}_i\right)^2 \le 2 n^2 R_n + 6n. 
\end{split}
\en

Note that, by our assumption $\lim_{n\rightarrow \infty} nR_n=\infty$. Thus, $n^2 R_n \gg n$. And hence, 
\eq\label{eq:tailbnd8}
\P\left(  \norm{Z^B - \nu^B}^2 >  r n    \right) \le \frac{c'_3 R_n}{ (r-1+p_n)^2} \le \frac{c_3 R_n}{r^2}, 
\en
for some universal positive constants $c_3, c_3'$. 

Let $r'=\sqrt{1+r}-1$. Therefore, by combining all the tail bounds we get
\[
\begin{split}
\P&\left( \sqrt{n} \norm{X^{(n)} -\nu^{(n)}} > 1+r \right) \le 2 \exp\left( -\frac{1}{4\sqrt{R_n}} \right) \\
&+ \exp\left( - c_0 \min\left( r' \sqrt{H_n}, (r')^2  H_n \right) \right) +  \frac{c_3 R_n}{r^2}.
\end{split}
\]
Considering the leading terms on the right and changing the constants as needed, we get the desired bound in the statement.
\end{proof}

\section{Construction of the ASTRA sequence}\label{sec:seqmarket} 
We will now collect all the conditions we needed on our sequence $(a_n,\; n\in \NN)$ in the following list. 

\begin{asmp}\label{asmp:primasmp1}
Recall that we have a non increasing sequence $(a_n,\; n\in \NN)$ such that each $a_i\in (0,1)$. We assume the following conditions on this sequence.
\begin{enumerate}[(i)]
\item The sequence of partial sums $\left( H_n,\; n\in \NN \right)$ is regularly varying of index $\rho\in [0,1/2]$.  
\item Recall $R_n$ from \eqref{eq:whatisrn}. Then 
\[
\lim_{n \rightarrow \infty} R_n=0, \quad \text{but}, \quad  n R_n=\Omega(\log n) \rightarrow \infty. 
\]
\item $\lim_{n\rightarrow \infty} na_n/H_n= \rho$. 
\item Finally, if $\rho=0$, we assume that $(H_n,\; n\in \NN)$ belongs to the Zygmund class (Definition \ref{defn:zygmund}) and in its normalized Karamata representation (see Theorem \ref{thm:karamata} and Remark \ref{rmk:normkara}) one can take $\varepsilon_n \sim 1/\log n$. 
\end{enumerate}
\end{asmp}

As we have argued before, the hyperharmonic sequences all satisfy the above requirements. 
\bigskip

As mentioned in the Introduction, we assume that there is a probability space on which, for every dimension $n\in \{2,3,\ldots\}$, there is a process of market weights $\left( \mu^{(n)}(t),\; t\ge 0  \right)$ that is a continuous semimartingale on the state space $\simp^{(n)}$. 

Let $\left( \mu^{(n)}_1(t), \ldots, \mu^{(n)}_n(t)   \right)$ be the coordinates of $\mu^{(n)}(t)$. We will make three assumptions on the sequence of processes $\left(  \mu^{(n)},\; n \in \NN  \right)$. To introduce these assumptions, choose the two numbers $0 < r_1 < r_2$ from Assumption \ref{asmp:mainasmp}. First assume that $r_2 < \pi/2-1$ and let $b_1=1+r_1$ and $b_2=1+r_2$. Then, $1 < b_1 < b_2 < \pi/2$. Consider two Euclidean balls in $\rr^n$:
\[
B_1:=\left\{x: \sqrt{n}\norm{x- \nu^{(n)}} < b_1  \right\}, \quad B_2:=\left\{ x: \sqrt{n}\norm{x-\nu^{(n)}} < b_2   \right\}. 
\]
Clearly, $B_1 \subset B_2$.

Now consider the cosine portfolio generated by exponentially concave function $\varphi$ from Definition \ref{defn:cosine} with $x_0=\nu^{(n)}$ and $c=1$. Let $D^{(n)}$ denote its domain, i.e., 
\[
D^{(n)}:=\left\{ x\in \rr^n:\; \sqrt{n} \norm{x - \nu^{(n)}} < \pi/2 \right\}.
\]
Then both $B_1$ and $B_2$ are in $D^{(n)}$. We are interested in the intersection $D^{(n)} \cap \simp^{(n)}$.  

If $r_2 > \pi/2-1$, choose a suitable $0< c< 1$ in the cosine portfolio such that $1+r_2 < \pi/(2 c)$ and apply the following argument which is independent of $c$. Hence, for the rest of the argument we will assume that $1 < b_1 < b_2 < \pi/2$.

We recall our general set-up from Assumption \ref{asmp:primasmp}. In particular, recall the sequences $\left(H_n,\; n \in \NN\right)$ and $\left( R_n,\; n \in \NN\right)$.

\begin{lemma}\label{lem:drift size} Consider a semimartingale process $\left( \mu(s),\; s\ge 0  \right)$ satisfying \eqref{eq:volbnd}. Let $\tau$ be a stopping time such that $\left\{ \mu(s),\; \; 0\le s \le \tau   \right\} \subseteq D^{(n)}$. Then, almost surely, we have the following lower bound on the drift
\[
\Theta(t) \ge \frac{\tmin}{4}  \left( n R_n - \frac{\pi^2}{2} \right) t, \quad \text{for all $0\le t \le \tau$}. 
\]
\end{lemma}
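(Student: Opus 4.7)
The plan is to apply the matrix form of Lemma \ref{lem:expcnv} (or the equivalent statement in Remark \ref{rmk:genmat}) to the generating function $\Phi=\exp(\varphi)$ with $\varphi(x)=\log\cos(\sqrt{n}\,\norm{x-\nu^{(n)}})$, obtaining the pointwise Hessian bound
\[
\Hess\,\Phi(x)\;\le\;-n\,\Phi(x)\,I_n\qquad\text{for every }x\in D^{(n)},
\]
in the sense of nonpositive-definite matrices, where $I_n$ is the $n\times n$ identity. Since the infinitesimal matrix of mutual variations $\bigl(d\iprod{\mu_i,\mu_j}\bigr)_{i,j}$ is positive semidefinite as a measure, taking the trace of the above matrix inequality against it (using the standard fact that $A\le B\Rightarrow \mathrm{tr}(AC)\le \mathrm{tr}(BC)$ for PSD $C$) yields, on $\{s\le\tau\}$,
\[
\Hess\,\Phi(d\mu(s),d\mu(s))\;\le\;-n\,\Phi(\mu(s))\sum_{i=1}^n d\iprod{\mu_i^{(n)},\mu_i^{(n)}}_s.
\]
Dividing by $-2\Phi(\mu(s))>0$ and integrating from $0$ to $t\le\tau$,
\[
\Theta(t)\;\ge\;\frac{n}{2}\int_0^t\sum_{i=1}^n d\iprod{\mu_i^{(n)},\mu_i^{(n)}}_s.
\]

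Next I would invoke the volatility hypothesis \eqref{eq:volbnd} to replace the quadratic variation by the Lebesgue integral $\tmin\int_0^t\sum_i(\mu_i^{(n)}(s))^2\,ds$, obtaining
\[
\Theta(t)\;\ge\;\frac{n\tmin}{2}\int_0^t\sum_{i=1}^n\bigl(\mu_i^{(n)}(s)\bigr)^2\,ds.
\]
It remains to bound $\sum_i\mu_i^2$ from below uniformly on $D^{(n)}$. Since $\norm{\mu(s)-\nu^{(n)}}^2<\pi^2/(4n)$ whenever $\mu(s)\in D^{(n)}$, the elementary inequality $\norm{\nu^{(n)}}^2\le 2\bigl(\norm{\mu(s)}^2+\norm{\mu(s)-\nu^{(n)}}^2\bigr)$ (from $(a+b)^2\le 2(a^2+b^2)$) gives
\[
\sum_{i=1}^n \bigl(\mu_i^{(n)}(s)\bigr)^2\;\ge\;\frac{1}{2}\norm{\nu^{(n)}}^2-\norm{\mu(s)-\nu^{(n)}}^2\;\ge\;\frac{R_n}{2}-\frac{\pi^2}{4n},
\]
where I use the identity $\norm{\nu^{(n)}}_2^2=\sum_i(\nu_i^{(n)})^2=R_n$ from \eqref{eq:whatisrn}. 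Multiplying by $n$ and substituting back,
\[
\Theta(t)\;\ge\;\frac{\tmin}{2}\!\left(\frac{nR_n}{2}-\frac{\pi^2}{4}\right)t\;=\;\frac{\tmin}{4}\!\left(nR_n-\frac{\pi^2}{2}\right)t,
\]
as required.

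The only delicate point is the first step: pairing the matrix Hessian bound against the covariation measure. On the open set $D^{(n)}$ the function $\varphi$ is smooth, so $\Hess\,\Phi$ exists in the classical sense and the trace inequality is immediate; were one to work on the closure one would interpret $\Hess\,\Phi$ as an Alexandrov measure as in Lemma \ref{lem:expcnv}, but here the stopping time $\tau$ keeps $\mu$ inside $D^{(n)}$, so no regularization is needed. Everything else is a routine chain of inequalities.
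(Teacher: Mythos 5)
Your argument follows the paper's proof step for step: you invoke the $(n,1)$ exponential concavity bound $\Hess\,\Phi \le -n\Phi\,I$ paired against the nonnegative-definite covariation measure, apply the volatility hypothesis \eqref{eq:volbnd}, and close with the elementary inequality $\norm{\nu^{(n)}}^2 \le 2\bigl(\norm{p}^2 + \norm{p-\nu^{(n)}}^2\bigr)$ together with $\norm{\nu^{(n)}}_2^2 = R_n$ and the domain bound $n\norm{p-\nu^{(n)}}^2 < \pi^2/4$. The only differences are cosmetic: you spell out the trace pairing of the matrix inequality against the PSD covariation and the smoothness of $\Phi$ on the open set $D^{(n)}$, both of which the paper leaves implicit.
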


\begin{proof}
By the $(n,1)$ exponential concavity of $\varphi$, whenever $\mu(s)\in D^{(n)}$, we have 
\[
-\frac{1}{\Phi\left( \mu(s) \right)}\Hess\; \Phi\left( d\mu(s), d\mu(s)  \right) \ge n I.
\]
Therefore, for $0\le t \le \tau$, we get
\eq\label{eq:integbnd}
\begin{split}
-\int_0^t \frac{\Hess\ \Phi(d\mu(s), d\mu(s))}{2\Phi\left( \mu(s) \right)}ds &\ge \frac{n}{2} \int_0^t \sum_{i=1}^n  d\iprod{\mu_i, \mu_i}(s)\ge \frac{\tmin n}{2} \int_0^t \sum_{i=1}^n \mu_i^2(s)ds. 
\end{split}
\en 
The last inequality is due to \eqref{eq:volbnd}. 

Now let $p$ be any arbitrary point in $D^{(n)}$. By elementary algebra, 
\[
\left(\nu^{(n)}_i\right)^2 \le 2\left( p_i^2 + \left( p_i - \nu^{(n)}_i  \right)^2   \right).
\]
Therefore, summing over $i$ in the above inequality we get
\eq\label{eq:estrn}
\begin{split}
2n\sum_{i=1}^n p_i^2 \ge n \sum_{i=1}^n \left(\nu^{(n)}_i\right)^2 - 2n \norm{p - \nu^{(n)}}^2 \ge n R_n - \frac{\pi^2}{2}. 
\end{split}
\en
Substituting the above lower bound for every $\mu(s)$ in \eqref{eq:integbnd} gives us the statement of the lemma. 
\end{proof}

We can finally write a statement on the existence of short term arbitrage generalizing Theorem \ref{thm:mainproblimited}. 
Recall $\delta_n, q_n$ from Assumption \ref{asmp:mainasmp}.

\begin{thm}\label{thm:mainprob} 
Assume that our sequence $\left(a_n,\; n\in \NN\right)$ satisfy Assumption \ref{asmp:primasmp1}. Suppose that we are given an $\epsilon \in (0,1)$. Then there exists a sequence of portfolios $\left( \pi_n,\; n \in \NN  \right)$ such that the following conclusions are valid.  Let $V_n(t)$ denote the relative value of the portfolio $\pi_n$ at time $t$. As usual, we always assume $V_n(0)=1$. 

In the subcritical case, when $\rho \in (0, 1/2]$, fix $k\in \NN$. Then the following hold. 
\begin{enumerate}[(i)]
\item Almost surely, $\inf_{0\le t \le \delta_n} V_n(t) \ge (1-\epsilon)$ for every $n$. 
\item With probability $1- \left( q_n + O(\exp(-c_1 n^{\rho/4}))  \right)$, we have 
\[
\log V_n(\delta_n) = \Omega\left( n R_n (\log n)^{-1/2} \right).
\]
\end{enumerate}

In the critical case, when $\rho=0$, the following conclusions hold.  
\begin{enumerate}[(i)']
\item Almost surely, $\inf_{0\le t \le T_n} V_n(t) \ge (1-\epsilon)$ for every $n$. 
\item With probability $1- q_n - O(R_n)$, we have $\log V_n(T_n) = \Omega\left( n R_n (\log n)^{-1/2} \right)$. 
\end{enumerate}
\end{thm}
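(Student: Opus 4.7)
\medskip

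\textbf{Proof plan.} The plan is to build $\pi^{(n)}$ as a buy-and-hold split between two sub-portfolios: fraction $1-\epsilon$ of initial wealth invested in the market portfolio, and fraction $\epsilon$ invested in the cosine portfolio of Definition \ref{defn:cosine} (centered at $x_0 = \nu^{(n)}$, scale $c = 1$), the latter stopped at $\tau := \inf\{t \geq 0 : Y(t) \geq b_2\}$ and thereafter switched to the market portfolio, where $b_1 = 1 + r_1$, $b_2 = 1 + r_2$ are chosen so that $1 < b_1 < b_2 < \pi/2$ (if $\epsilon$ forces a larger $b_2$, rescale the cosine via $c < 1$ as already outlined in the text before Lemma \ref{lem:drift size}). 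Because the two sub-funds are never rebalanced against each other, the relative value of $\pi^{(n)}$ decomposes as
\[
V_n(t) = (1-\epsilon) + \epsilon\, V_{\mathrm{cos}}(t),
\]
where $V_{\mathrm{cos}}$ denotes the relative value of the stopped cosine portfolio. Since $V_{\mathrm{cos}} > 0$ always (and in fact $V_{\mathrm{cos}}(t) \geq \cos(b_2)$ by the stopping and the inequality $V_{\mathrm{cos}} \geq \Phi(\mu)/\Phi(\mu(0))$), conclusions (i) and (i') follow immediately.

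For the gain on a good event, set
\[
E := \bigl\{ Y(0) \leq b_1 \bigr\} \cap \Bigl\{ \sup_{0\leq t \leq \delta_n} Y(t) \leq b_2 \Bigr\}.
\]
On $E$ we have $\tau > \delta_n$, so the stopped cosine portfolio agrees with the ordinary cosine portfolio on $[0, \delta_n]$ and Fernholz's decomposition \eqref{eq:fernholz} applies with $\varphi(x) = \log\cos(\sqrt{n}\|x - \nu^{(n)}\|)$:
\[
\log V_{\mathrm{cos}}(\delta_n) \;=\; \log\cos Y(\delta_n) \,-\, \log\cos Y(0) \,+\, \Theta(\delta_n) \;\geq\; \log\cos b_2 + \Theta(\delta_n).
\]
Lemma \ref{lem:drift size} gives $\Theta(\delta_n) \geq \tfrac{\tmin}{4}(nR_n - \pi^2/2)\,\delta_n$; combined with Assumption \ref{asmp:primasmp1}(ii) ($nR_n = \Omega(\log n) \to \infty$) and Assumption \ref{asmp:mainasmp} ($\delta_n\sqrt{\log n} \to \infty$, so $\delta_n \geq c/\sqrt{\log n}$ for large $n$), this yields $\Theta(\delta_n) = \Omega(nR_n/\sqrt{\log n})$, which dominates the bounded additive constant $\log\cos b_2$. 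Consequently $\log V_n(\delta_n) \geq \log\epsilon + \log V_{\mathrm{cos}}(\delta_n) = \Omega\!\left(nR_n(\log n)^{-1/2}\right)$ on $E$.

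For the probability of the complement, write
\[
\P(E^c) \leq \P(Y(0) > b_1) + q_n(b_1, b_2).
\]
By Assumption \ref{asmp:mainasmp}(i), $Y(0)$ has the law of $\sqrt{n}\|X - \nu^{(n)}\|$ with $X \sim \Diri(n\nu^{(n)})$, so Proposition \ref{prop:subcritdir} bounds $\P(Y(0) > b_1)$ by $O(\exp(-c_1 n^{\rho/4}))$ in the subcritical case and Proposition \ref{prop:critdir} bounds it by $O(R_n)$ in the critical case. Together with $q_n(b_1, b_2) \to 0$ from Assumption \ref{asmp:mainasmp}(ii), this gives the stated probability bounds in (ii) and (ii').

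The main obstacle is a conceptual rather than computational one: it is the verification that the buy-and-hold two-fund allocation is a genuine $\mathcal{F}$-predictable self-financing portfolio on $\overline{\simp^{(n)}}$ whose relative value satisfies $V_n = (1-\epsilon) + \epsilon V_{\mathrm{cos}}$. This follows because, without inter-fund rebalancing, the wealth-weighted average of the two sub-portfolios' weight vectors lies in $\overline{\simp^{(n)}}$ at every time, and the total wealth equals the sum of the sub-fund wealths; once this identity is in hand, the remainder of the proof is simply a stitching together of Fernholz's decomposition, the drift lower bound from Lemma \ref{lem:drift size}, and the two Dirichlet concentration estimates already established.
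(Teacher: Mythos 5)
Your proof is correct, and it departs from the paper's own argument in one genuine way: the mechanism used to guarantee the floor $V_n(t)\ge 1-\epsilon$. The paper runs (essentially) the pure cosine portfolio until the exit time from $B_2$ and then switches to the market, and it arranges the floor by tuning the concavity scale $c_0$ and the radius $b_2$ so that $\cos(c_0 b_2)\ge 1-\epsilon$; since $\Theta\ge 0$ and $\varphi\le 0$, the log relative value stays above $\log\cos(c_0 b_2)\ge\log(1-\epsilon)$ inside $B_2$. Your construction instead statically allocates fraction $1-\epsilon$ to the market and $\epsilon$ to the stopped cosine fund, so $V_n=(1-\epsilon)+\epsilon V_{\mathrm{cos}}\ge 1-\epsilon$ automatically, no matter how small the cosine fund's value becomes. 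This removes the need to tune $c$ and $b_2$ against $\epsilon$ (so your parenthetical about rescaling $c$ when ``$\epsilon$ forces a larger $b_2$'' is actually unnecessary in your approach), and it also handles $\mu(0)\notin B_2$ cleanly via $\tau=0$; the price is that the portfolio is a wealth-weighted blend of two FGPs and hence path-dependent rather than a single functionally generated map, whereas the paper emphasizes that its strategy is ``essentially'' functionally generated. The remainder of your argument --- the use of Fernholz's decomposition \eqref{eq:fernholz}, the drift lower bound from Lemma \ref{lem:drift size} giving $\Theta(\delta_n)=\Omega(nR_n(\log n)^{-1/2})$, absorbing the additive constants $\log\cos b_2$ and $\log\epsilon$, and the probability accounting $\P(E^c)\le\P(Y(0)>b_1)+q_n(b_1,b_2)$ followed by Propositions \ref{prop:subcritdir} and \ref{prop:critdir} --- matches the paper's computations.
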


\begin{proof} First consider the case when $\epsilon =1/2$. Choose $1< b_1 < b_2 < \pi/2$ such that $\cos(b_2) \ge 1-\epsilon=1/2$. This is possible since $\cos(1)\approx 0.54 > 0.5$. Consider the neighborhoods $B_1 \subseteq B_2$ of $\nu^{(n)}$ accordingly. Now consider the portfolio generated by the $(n,1)$ exponentially concave function $\varphi$ given in Lemma \ref{lem:expcnv} with $x_0=\nu^{(n)}$. 

Consider the subcritical case. By Proposition \ref{prop:subcritdir} and Assumption \ref{asmp:mainasmp}, with probability at least $q_n + O(\exp(-c_1 n^{\rho/4}))$, we get $\mu^{(n)}(0)\in B_1$ and the process does not exit $B_2$ by time $\delta_n$. On this event, by Lemma \ref{lem:drift size}, by time $\delta_n$ the total drift for this portfolio is $\Omega\left( nR_n \delta_n \right)=\Omega(\sqrt{\log n})$, by Assumption \ref{asmp:primasmp1} and the fact that $\delta_n=\Omega(1/\sqrt{\log n})$. On the other hand, the range of $\varphi$ insider $B_2$ is 
\[
-\log \cos\left(  b_2 \right)\le - \log(1-\epsilon).
\]
Thus, it follows from \eqref{eq:fernholz} that the relative value of this portfolio never drops below $(1-\epsilon)$. 
On the complement of this event, if $\mu$ exits $B_2$ before time $\delta_n$, we convert our portfolio to the market portfolio. The maximum loss in log relative value is still $(1-\epsilon)$. This proves the result for the subcritical case. 
The critical case is similar.

Now, fix any other $\epsilon \in (0,1)$. If $\epsilon > 1/2$, then we are covered by the case of $\epsilon=1/2$. Suppose $\epsilon \le 1- \cos(1)$. Consider the cosine portfolio from Definition \ref{defn:cosine} by fixing a positive constant $c_0 < 1$ and considering the generating function 
\[
\varphi(x)= \log \cos\left(  c_0 \sqrt{n} \norm{x - x_0}  \right), \quad \text{on the domain}\quad \sqrt{n}\norm{x-x_0} \le \frac{\pi}{2 c_0}. 
\] 
By Remark \ref{rmk:genmat}, on the above domain we get
\[
\frac{1}{\Phi(x)} \Hess\; \Phi(x) \le - c_0 I. 
\]  

Now, choose $c_0$ such that $\cos(c_0) > 1-\epsilon$. Since $\cos(\cdot)$ is decreasing on $[0, \pi/2]$ and $1-\epsilon \ge \cos(1)$, this allows us to choose $c_0 < 1$ to satisfy the requirement of the previous paragraph. Now, as before, choose $1< b_1 < b_2 < \pi/2$ such that $\cos(c_0 b_2) > 1-\epsilon$. We can now repeat the above argument for $\epsilon=1/2$ to reach the same conclusion. The constant $c_0$ is absorbed in the big-O notation.

The case of the remaining interval $\epsilon \in (1- \cos(1), 1/2)$ is now covered by the case of $\epsilon=1-\cos(1) < 1/2$.  
\end{proof}

The proof of Theorem \ref{thm:mainproblimited} now follows as a special case of the above result and the estimates \eqref{eq:estimate1} and \eqref{eq:estimate2}.

\section{Theoretical examples and data analysis}\label{sec:analysis} Of course, one might ask if there is any process $\mu^{(n)}$ that satisfies all the conditions in Assumption \ref{asmp:mainasmp}. We are going to show that the stationary Wright-Fisher (WF) model in dimension $n$ with parameters $n\nu^{(n)}$ satisfies all the conditions of the theorem. We only consider the subcritical case of $\rho\in [0, 1/2)$ for simplicity.

\subsection{Theoretical examples} We will refer to the WF process with parameters $n \nu^{(n)}$ by $\WF\left(n\nu^{(n)}\right)$. This is a diffusion process on state space $\simp^{(n)}$ that satisfies the following stochastic differential equation (SDE):
\[
d\mu(t) = b\left(  \mu(t) \right) dt + \sigma\left( \mu(t) \right) d\beta(t),
\]
where
\begin{enumerate}[(i)]
\item $\beta$ is a $n$-dimensional standard Brownian motion. 
\item $b:\simp^{(n)} \rightarrow \rr^n$ is the function given by the vector difference:
\[
b(p)= \frac{n}{2}\left( \nu^{(n)} - p \right), \qquad p \in \simp^{(n)}.
\]
\item $\sigma$ is a map from $\simp^{(n)}$ to the space of $n\times n$ nonnegative definite matrices. If $p\in \simp^{(n)}$, the $(i,j)$th element of the matrix $\sigma(p)$ is given by 
\[
\sigma_{i,j}(p)= \sqrt{p_i}\left( 1\{ i=j \} - \sqrt{p_i p_j} \right), \qquad 1\le i, j \le n. 
\]
\end{enumerate}

Alternatively, the process can be described via its generator acting on twice continuously differentiable functions $f:\simp^{(n)}\rightarrow \rr$:
\eq\label{eq:genwf}
\mcal{A}_n f(p) := \frac{1}{2} \sum_{i=1}^n \sum_{j=1}^n p_i\left( 1\{i=j\} - p_j \right) \frac{\partial^2 f}{\partial p_i \partial p_j}  + \frac{n}{2}\sum_{i=1}^n \left( \nu^{(n)}_i - p_i \right) \frac{\partial f}{\partial p_i}.  
\en

It is known (see \cite{PalVSM, goiathesis}) that the WF model is the process law of the vector of the market weights under a generalization of the volatility-stabilized model introduced in \cite{FK05}. It is also known (see \cite{PalVSM}) that the unique invariant distribution of $\WF\left( n\nu^{(n)} \right)$ is $\Diri\left( n\nu^{(n)} \right)$. Thus, if $\mu(0)\sim \Diri\left( n \nu^{(n)}  \right)$ and $\left(\mu(t),\; t\ge 0 \right)$ evolves according to $\WF\left( n\nu^{(n)}  \right)$ the process remains stationary.

Let us now consider the process $Y(t)=n\norm{\mu(t) -  \nu^{(n)}}^2$, $t\ge 0$. Consider the Euclidean distance function $y(p)= n\sum_{i=1}^n \left( p_i - \nu^{(n)}_i  \right)^2$. We compute $\mathcal{A}_n y(p)$ to get 
\[
\begin{split}
\mathcal{A}_n y(p) &= -n^2\norm{p- \nu^{(n)}}^2 + n\sum_{i=1}^n p_i(1- p_i) = -ny(p) + n \sum_{i=1}^n p_i(1-p_i).
\end{split}
\] 
Notice that we have the following inequality: $-ny(p) \le \mathcal{A}_ny(p) \le n \left( 1 - y(p)  \right)$.

Moreover, for any $p \in \simp^{(n)}$, consider the matrix $\Sigma(p):=\sigma\sigma'(p)$. Then, it is clear from \eqref{eq:genwf} that 
\[
\Sigma(p) = \text{Diag}(p) - p p', 
\]
where $\text{Diag}(p)$ is the diagonal matrix with diagonal vector $p$. Thus, for any $u\in \rr^n$, we get 
\[
u' \Sigma(p) u = \sum_{i=1}^n p_i u_i^2 - \left( \sum_{i=1}^n p_i u_i \right)^2 \le \left(\max_{1\le i \le n} p_i\right)  \norm{u}^2 \le \norm{u}^2.
\]

Thus, by It\^o's rule
\[
dY(t) = \left( -nY(t) + n \sum_{i=1}^n \mu_i(t)\left( 1- \mu_i(t) \right)    \right) dt + dM(t),  
\]
where $M$ is martingale with quadratic variation
\[
\frac{d}{dt} \iprod{M}_t:=4 n^2 \left( \mu(t) - \nu^{(n)}  \right)' \Sigma\left(\mu(t) \right)\left( \mu(t) - \nu^{(n)}  \right) \le 4 n Y(t). 
\]

Since $Y$ never hits zero (follows from the skew-product result in \cite{PalVSM}), almost surely, one can apply It\^o's rule to derive the SDE of $Z(t)=\sqrt{Y}(t)$: 
\eq\label{eq:sdez}
\begin{split}
d Z(t) &= \frac{1}{2Z(t)} dY(t) -\frac{1}{8 Z^{3}(t)} d\iprod{Y}(t)\\
&= \frac{1}{2Z(t)} d M(t) + \frac{n}{2Z(t)}\left[ -Y(t) + \sum_{i=1}^n \mu_i(t) (1- \mu_i(t))  \right] dt -\frac{1}{8 Z^{3}(t)} d\iprod{Y}(t)\\
&= dN(t) + \frac{n}{2}\left[ -Z(t) + \frac{1}{Z(t)}\sum_{i=1}^n \mu_i(t) (1- \mu_i(t))  \right] dt -\frac{1}{2Z(t)} d\iprod{N}(t).
\end{split}
\en
Here $N$ is a local martingale such that 
\[
\iprod{N}_t= \int_0^t \frac{1}{4 Z^2(s)} d \iprod{M}_s \le \int_0^t \frac{4n Y(s)}{4 Y(s)}ds \le nt, \quad \text{for all $t$}. 
\]

By the Dambis-Dubins-Schwarz theorem \cite[page 174]{KSbook}, we get that there is a standard Brownian motion $\beta$ and a time change $\left(A_t,\;  t \ge 0 \right)$ such that $\P(A_t\le t,\; \forall\; t\ge 0)=1$ and $N(t)=\beta(n A_t)$ for all $t\ge 0$.

Recall $b_2 > b_1 > 1$. Choose $1/2 > \epsilon_1 > \epsilon_2 > 0$. Recall that $\varsigma_a$ is the first hitting time of $a$. Consider the instantaneous drift coefficient of the process $Z$ from \eqref{eq:sdez}. Suppose $Z(0) \in [\epsilon_1, b_1]$, then, during the interval $[0, \varsigma_{b_2}\wedge \varsigma_{\epsilon_2}]$, we get that the coefficient of the instantaneous drift must trivially lie in the interval
\[
\left[ -\frac{n}{2}\left(b_2 + \frac{1}{\epsilon_2}\right), \frac{n}{2\epsilon_2} \right]= [-nc_1, nc_2],  
\]
for some positive constants $c_1, c_2$. Thus, we get that for $t\in [0, \varsigma_{b_2} \wedge \varsigma_{\epsilon_2}]$, we get the following estimate
\eq\label{eq:ddscomp}
-n c_1 t + \beta(n A_t)  \le Z(t) - Z(0) \le n c_2 t + \beta(n A_t).  
\en

Now we have assumed that $Z(0) \in \left[ \epsilon_1,  b_1\right]$. Then, the event $\{ \varsigma_{b_2}\wedge \varsigma_{\epsilon_2} \le  1/n^2  \}$ implies that either $A:=\{ \varsigma_{b_2} \le 1/n^2,\; \varsigma_{b_2} \le \varsigma_{\epsilon_2} \}$ or $B:=\{ \varsigma_{\epsilon_2} \le 1/n^2,\; \varsigma_{\epsilon_2} \le \tau_{b_2} \}$ must have happened. However, by comparing with Brownian motions in \eqref{eq:ddscomp} we get 
\[
\begin{split}
\P(A) &\le \P\left( \sup_{0\le t \le n^{-2}} \beta(n A_t) > (b_2-b_1) - \frac{c_2}{n}  \right)\\
&= \P\left( \sup_{0\le t \le n^{-2}} \beta(n t) > (b_2-b_1) - \frac{c_2}{n}  \right)\\
& \le  \P\left( \sup_{0\le t \le n^{-2}} \beta(t) > \frac{1}{\sqrt{n}}(b_2-b_1) - \frac{c_2}{n^{3/2}}  \right)\le C_3\left( n^2 e^{-c_3 n}  \right),
\end{split}
\]
for some positive constants $c_3, C_3 >0$ and for all large enough $n$ such that ${c_2}/{n} \le (b_2-b_1)/2$ (say). 

Similarly,
\[
\begin{split}
\P(B) &\le \P\left( \inf_{0\le t \le n^{-2}} \beta(nA_t) < (-\epsilon_1 + \epsilon_2) + \frac{c_1}{n}  \right)\\
& \le  \P\left( \inf_{0\le t \le n^{-2}} \beta(t) \le -\frac{\epsilon_1}{\sqrt{n}} - \frac{c_1}{n^{3/2}}  \right)\le C_4\left( n^2 e^{-c_4 n}  \right),
\end{split}
\]
for some positive constants $c_4, C_4 >0$ and for all large enough $n$. 

Combining the above two estimates we get 
\eq\label{eq:hittimeest}
\P\left( \varsigma_{b_2}\wedge \varsigma_{\epsilon_2} \le  1/n^2  \mid Z(0) \in [\epsilon_1, b_1]\right) \le \P(A) + \P(B) = O\left( n^2 e^{-c_0 n}  \right),
\en
for some positive constant $c_0$. 

Now, fix $T>0$, and consider the time interval $[0, T]$. It suffices to take $T=1$ and this is what is followed below. Partition the unit interval in size $1/n^2$, i.e., consider the subintervals $I_k:=\left\{  \left[ (k-1)/n^2, k/n^2  \right], \; k=0,1,2, \ldots, n^2-1   \right\}$. The event that $\sup_{0\le t \le T} Y(t) > b_2$ is contained in the union $\bigcup_{k=0}^{n^2-1} \left\{E_k \cup F_k\right\}$, where  
\begin{eqnarray*}
E_k&=& \left\{  Z(k/n^2) \notin \left[ \epsilon_1,  b_1 \right]\right\},\\
F_k &=& \left\{   Z(k/n^2) \in \left[ \epsilon_1,  b_1 \right],\; \text{and}\; \sup_{t \in I_k} Z(t) > b_2   \right\}.
\end{eqnarray*}

Since $\epsilon_1 < 1/2$, we use Proposition \ref{prop:direxpect} and the two sided concentration estimate in Proposition \ref{prop:subcritdir} under the Dirichlet distribution to obtain $\P(E_k)=\P(E_0)=O\left( e^{-c_1 n^{\rho/4}} \right)$. Also, from the estimate in \eqref{eq:hittimeest} we get $\P(F_k)=\P(F_0)= O\left( n^2 e^{-c_0 n}  \right)$. Hence, by the union bound estimate we get 
\eq\label{eq:escape}
\begin{split}
\P\left( \sup_{0\le t \le 1} Y(t) > b_2  \right) &\le n^2 O\left( e^{-c_1 n^{\rho/4}} \right) + n^2 O\left( n^2 e^{-c_0 n}  \right)= O\left(n^4 e^{-c_0 n^{\rho/4}}\right),  
\end{split}
\en
for some positive constant $c_0$. This verifies condition (ii) in Assumption \ref{asmp:mainasmp} for any bounded sequence $\left(\delta_n,\; n \in \NN\right)$. 

Notice that the same is true if we consider a deterministic time-change $\Gamma^{(n)}_t \le t$ and consider the time-changed stationary Wright-Fisher diffusion. 

We now verify condition \eqref{eq:volbnd}. It follows from the SDE that 
\[
\sum_{i=1}^n \frac{d}{ds} \iprod{\mu_i^{(n)}(s), \mu^{(n)}_i(s)} =  \sum_{i=1}^n \mu_i^{(n)}(s)\left( 1- \mu_i^{(n)}(s)  \right)= 1- \sum_{i=1}^n \left(\mu_i^{(n)}(s)\right)^2.
\]
Before it exits the set $B_2$, it follows from \eqref{eq:estrn} that the above is $\approx 1- R_n$ which is much bigger than $R_n \approx \sum_{i=1}^n \left(\mu_i^{(n)}(s)\right)^2$. In fact, we can again time-change by $\Gamma^{(n)}$. As long as $\Gamma^{(n)}_t \gg t R_n$, our assumptions continue to hold.

\bigskip

For the $\WF\left( n\nu^{(n)} \right)$ model if $\rho\in (0, 1/2)$ the following almost sure statement can be made which is akin to the definition of strong relative arbitrage. 

\begin{thm} Suppose that there is a probability space one which the entire sequence of processes $\left( \mu^{(n)},\; n \in \NN  \right)$ can be realized. Assume that $\mu^{(n)}$ follows stationary WF$\left( n\nu^{(n)} \right)$ model as above. Consider the sequence of portfolios $\left(\pi_n, \; n \in \NN\right)$ and their relative values $\left( V_n,\; n\in \NN  \right)$ from Theorem \ref{thm:mainprob}. Then, w.p. one, for any sample point $\omega$, there exists $n(\omega)\in \NN$ such that $V_{m}(1/\log n) > 1$ for all $m \ge n(\omega)$. 
\end{thm}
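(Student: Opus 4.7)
The plan is to apply the first Borel--Cantelli lemma on the common probability space. I interpret the time $1/\log n$ in the statement as $\delta_m := 1/\log m$ (one evaluates each $V_m$ at a scale that shrinks with $m$), and set $F_m := \{V_m(\delta_m) \le 1\}$. Because all $\mu^{(m)}$ live on the same space, it suffices to prove $\sum_{m} \P(F_m) < \infty$ and then invoke Borel--Cantelli.

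First I would revisit the proof of Theorem \ref{thm:mainprob} with this smaller $\delta_m$. Fernholz's decomposition \eqref{eq:fernholz} gives $\log V_m(\delta_m) = \varphi(\mu^{(m)}(\delta_m)) - \varphi(\mu^{(m)}(0)) + \Theta(\delta_m)$. On the event $G_m := \{\mu^{(m)}(0) \in B_1\} \cap \{\sup_{0\le t\le \delta_m}\sqrt{m}\norm{\mu^{(m)}(t)-\nu^{(m)}} \le b_2\}$, Lemma \ref{lem:drift size} forces $\Theta(\delta_m) \ge \tfrac{\tmin}{4}(mR_m - \tfrac{\pi^2}{2})\delta_m$, while the $\varphi$-range contribution is bounded in absolute value by the fixed constant $-\log\cos(b_2)$. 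For $\rho \in (0,1/2)$ the estimate \eqref{eq:estimate1} yields $mR_m = \Theta(m^{2\rho})$, so $mR_m\delta_m = \Theta(m^{2\rho}/\log m)$ diverges. Hence for all $m$ large enough, $G_m$ implies $V_m(\delta_m) > 1$.

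Next I would bound $\P(F_m) \le \P(G_m^c)$ using the two estimates already proved in Sections \ref{sec:Dirichlet} and \ref{sec:analysis}. Since $\mu^{(m)}(0) \sim \Diri(m\nu^{(m)})$ by stationarity, Proposition \ref{prop:subcritdir} gives $\P(\mu^{(m)}(0)\notin B_1) = O(\exp(-c_1 m^{\rho/4}))$. The Wright--Fisher escape estimate \eqref{eq:escape}, whose derivation used only boundedness of the time interval and not the stronger condition $\delta_m \sqrt{\log m}\to\infty$ of Assumption \ref{asmp:mainasmp}(ii), gives that the probability of exiting $B_2$ by time $\delta_m \le 1$ is $O(m^4 \exp(-c_0 m^{\rho/4}))$. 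Combining, $\P(F_m) = O(m^4 \exp(-c_0 m^{\rho/4}))$. Since $\rho>0$, the stretched exponential $\exp(-c_0 m^{\rho/4})$ decays faster than any polynomial, so $\sum_m \P(F_m) < \infty$, and Borel--Cantelli concludes that $F_m$ occurs only finitely often almost surely.

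The delicate step is the first one: justifying the use of the time $\delta_m = 1/\log m$, which is smaller than what Assumption \ref{asmp:mainasmp}(ii) permits in Theorem \ref{thm:mainprob}. This is legitimate here because we only need to cross the threshold $V_m=1$, not reach the super-polynomial growth $\exp(g_m)$ claimed in part (ii) of that theorem; the drift lower bound $\tfrac{\tmin}{4}(mR_m - \tfrac{\pi^2}{2})\delta_m$ still exceeds the $O(1)$ barrier whenever $mR_m\delta_m \to \infty$, and in the subcritical regime $\rho \in (0,1/2)$ this holds for the (rather small) choice $\delta_m = 1/\log m$.
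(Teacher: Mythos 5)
Your proof is correct and follows essentially the same Borel--Cantelli argument as the paper: bound the probability of the bad event (start outside $B_1$ or exit $B_2$ before $1/\log m$) using the escape estimate \eqref{eq:escape}, observe the bound is summable, and note that $mR_m/\log m\to\infty$ for $\rho\in(0,1/2)$ so the drift exceeds the $O(1)$ cost of $\varphi$. One small slip: for $\rho\in(0,1/2)$ estimate \eqref{eq:estimate1} gives $mR_m=\Theta(m^{1-2\rho})$, not $\Theta(m^{2\rho})$ (since $\alpha=1-\rho$ gives $nR_n\sim c\,n^{2\alpha-1}=c\,n^{1-2\rho}$); both exponents lie in $(0,1)$ so the polynomial divergence and hence the conclusion are unaffected.
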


\begin{proof} Recall $\varsigma=\inf\left\{ t\ge 0:\; \mu^{(n)}\notin B_2  \right\}$. Consider the bound in \eqref{eq:escape} to get $\P\left(  \mu^{(n)}(0)\in B_1,\; \varsigma > 1/\log n \right)= 1 - O(n^{-2})$. Apply Borel-Cantelli to get an almost sure statement. Since $\rho\in (0,1/2)$, we get $\lim_{n \rightarrow \infty} n R_n T_n=\infty$. 
\end{proof}

Of course, the above is not particularly practical since a priori we do not know which dimension to use. However, it is an interesting allusion to the asymptotic arbitrage theory. 

\begin{figure}[t]\centering
\includegraphics[width=3.8in, height=2.7in]{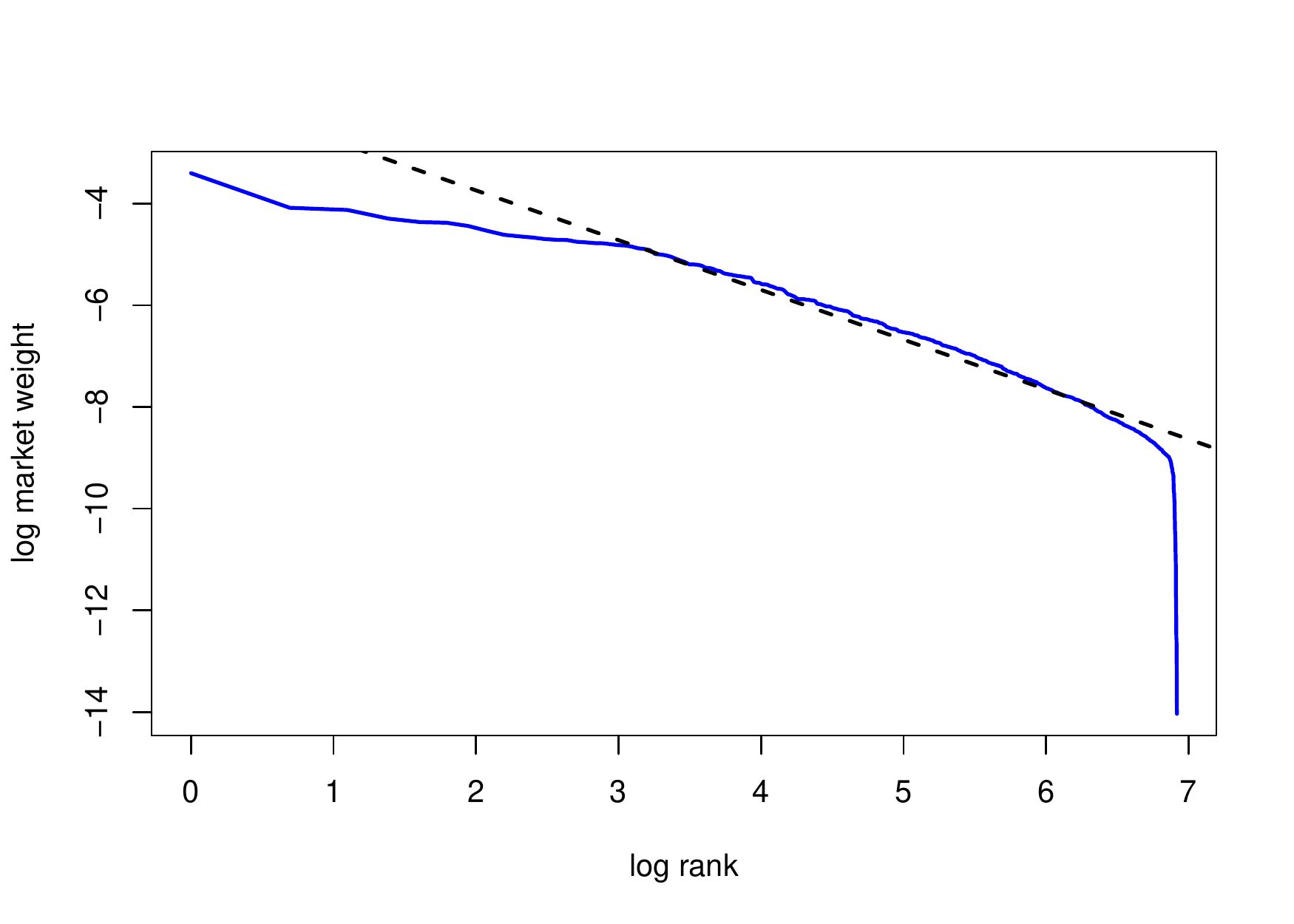}
\caption{Capital distribution curve Jun - Dec 2015  (Source: Russell 1000)}\label{fig:capdist}
\end{figure}

\subsection{Evidence from real data} For our data analysis we consider market capitalization data from the Russell 1000 universe. Russell 1000 is a capitalization-weighted index that constitutes of the largest 1000 companies in the U.S. equity markets. The total market capitalization of all the stocks listed in this index is more than $90\%$ of the entire market capitalization of all the listed U.S. stocks. We consider daily market capitalization data of stocks listed in this index for a period of 130 trading days starting on June 26, 2015, and ending on Dec 30, 2015.

Let us analyze some features of the data to argue that our assumptions are valid. By the nature of the data we can only trade once a day for six months, which is not exactly short term. However, the assumptions do not break down completely. 
In Figure \ref{fig:capdist} we show the capital distribution curve as it appears on the first date, Jun 26, 2015. We have have ranked the market weights and plotted $\log \mu_i$ against $\log i$. The graph shows a linear plot in the log-log scale for the top $700$ stocks and a steep fall for the last $300$. The estimated Pareto slope $\alpha$ for the top part is about $0.95$. If we ignore the bottom $300$ non-Pareto portion, this data lies within the range $[1/2, 1]$ we consider in this paper. Hence, we can take the dimension $n=1000$.

\begin{figure}[thb]
\begin{tabular}{ll}
 \includegraphics[width=2.5in, height=2.2in]{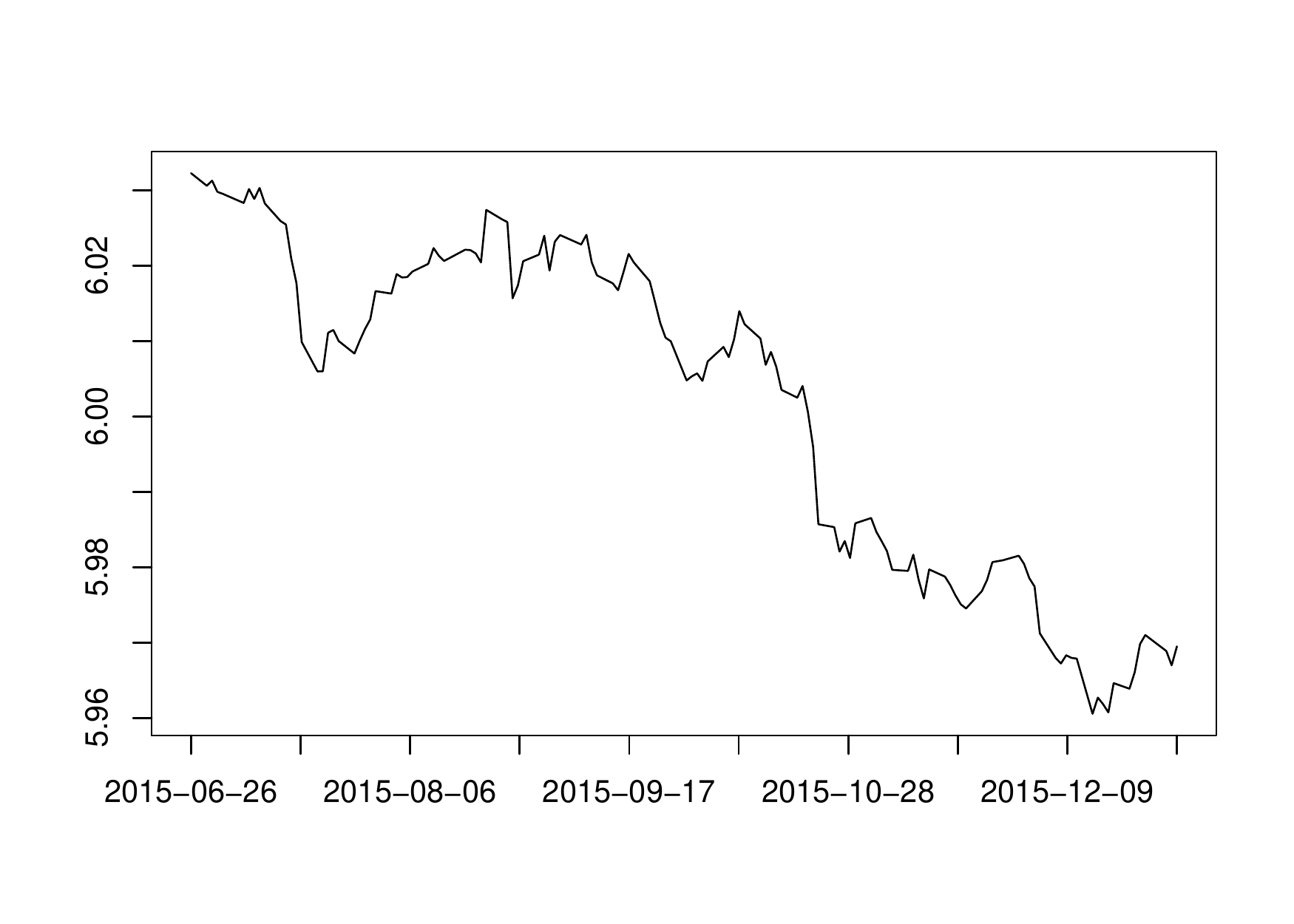} & \includegraphics[width=2.5in, height=2.2in]{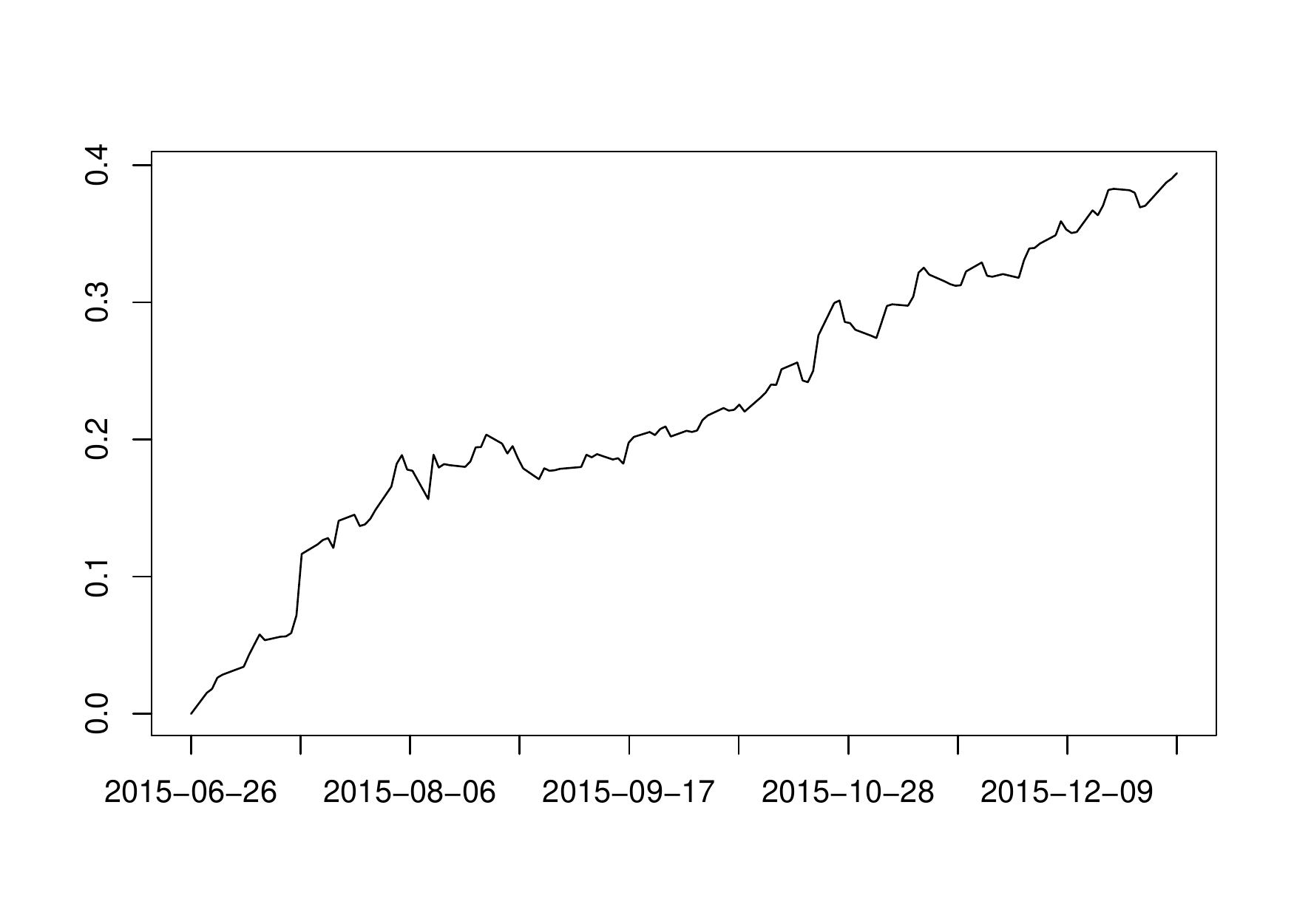}
  \end{tabular}
 \caption{Shannon entropy (left) and scaled Euclidean distance (right) time series.}\label{fig:entropy}
\end{figure}

However, over the course of six months, this slope fluctuates slightly getting closer to one with time. This can be seen by plotting the entropy of the market weights as a time series, as done in the left hand image of Figure \ref{fig:entropy}. The graph show the behavior of the Shannon entropy of the vector of market weights considered as a discrete probability distribution. The entropy decreases with time showing a greater concentration of wealth in the larger stocks and the Pareto slope tending to one. However, our Euclidean distance scaling of $\sqrt{n}$ remains valid as can be seen on the right hand image of Figure \ref{fig:entropy}. The graph shows the scaled Euclidean distance $\sqrt{n}\norm{\mu(t) - \mu(0)}$ against time $t$. The graph shows that this distance varies within $[0.0, 0.4]$ giving credence to the idea that this scaled distance is of order one.

\begin{figure}[htb]\centering
\includegraphics[width=4in, height=3in]{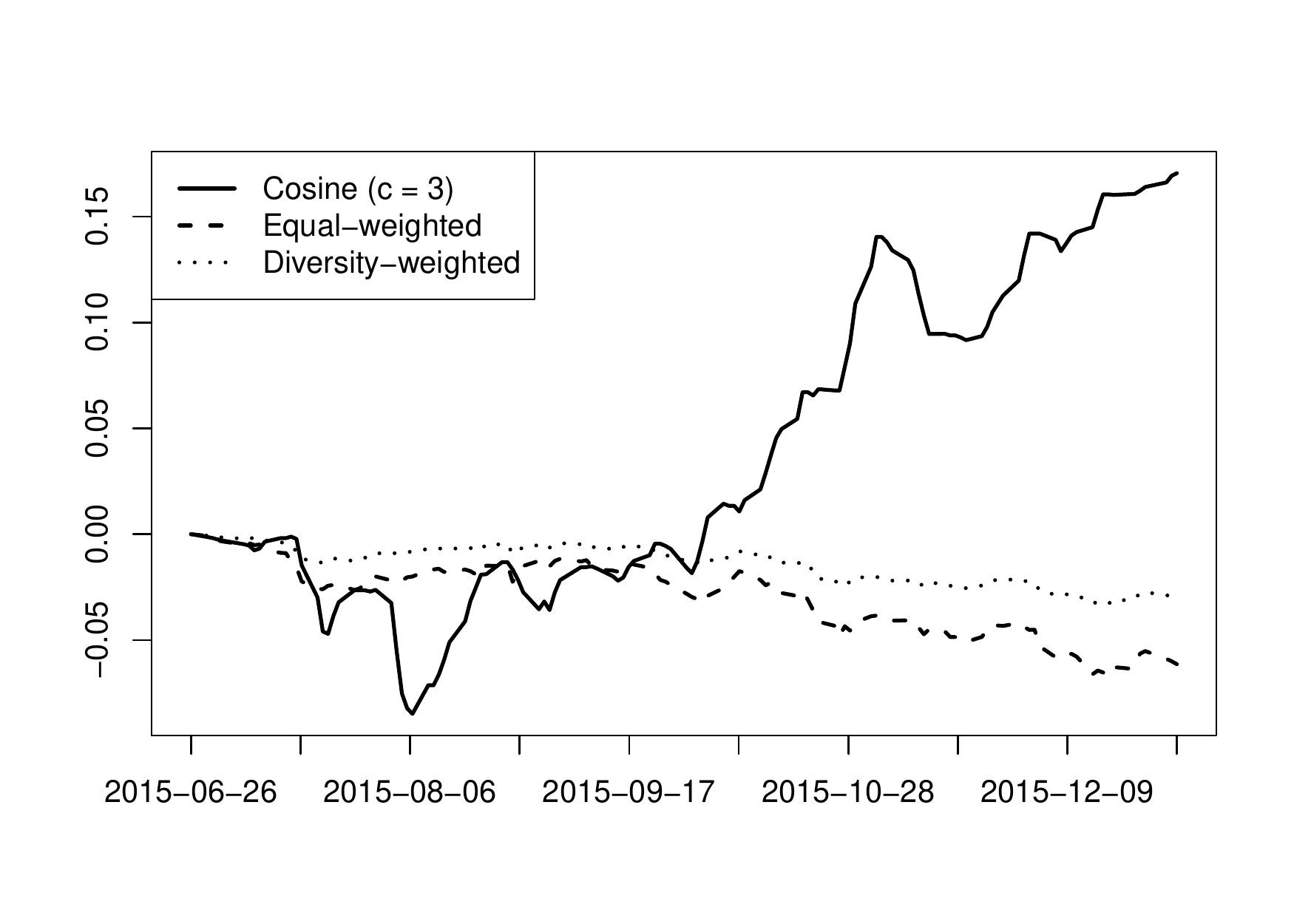}
\caption{Comparison of performances of equal-weighted, diversity-weighted, and the cosine portfolio}\label{fig:performance}
\end{figure}

Thus, we face three problems here: (i) how to reduce this problem to justify short-term? (ii) how to account for a changing slope, and (iii) how to choose the correct constant $c>0$ in the cosine portfolio strategy (Definition \ref{defn:cosine}) or equivalently in the Dirichlet distribution. We resolve these problems by dividing the $130$ days in $13$ periods of successive $10$ days. At the beginning of each period we take the initial market weights to be our $\nu^{(n)}$, which then gets updated in the next period. We choose $c=3$ ad hoc from inspecting the right hand image in Figure \ref{fig:entropy}.

The result of our cosine portfolio (combined over the $13$ periods) for $c=3$ is show in Figure \ref{fig:performance}. The log relative value of the portfolio with respect to the index is plotted on the $y$-axis in the \textbf{bold} line. For comparison, we have also shown the performance over the same data set of the equal weighted portfolio (in dash) and the diversity-weighted portfolio $D_{1/2}$ (in dots). For the definition of the latter see \cite[page 119, eqn. (7.1)]{FKsurvey}. This time period is a particularly bad time for \textit{volatility-harvesting strategies}. This has to do with the decrease in entropy and low volatility in the market. This is the reason why both equal-weighted and the diversity-weighted portfolios underperform the index at the end of six months. The cosine portfolio on the other hand makes significant gains, more than $15\%$ over six months which is an annual rate of $30\%$! But it does underperform initially (although not by much) which gets erased by the gains in the latter half. Better data should allow for a finer understanding of its performance and how to optimize parameters. Unfortunately, the author is limited by the data that he could access.

\section*{Acknowledgement} I am grateful to Jan Maas for pointing out the article \cite{EKS} during my recent visit to Vienna. My thanks to Walter Schachermayer and Matthias Beiglb\"ock for hosting me at Vienna and a lot of very useful discussion. Many thanks to Johannes Ruf and Leonard Wong for numerous comments on a previous draft. Alexander Vervuurt provided the Russell 1000 data. The data analysis was done by Alexander Vervuurt and Leonard Wong. I am very grateful to both of them.

\bibliographystyle{alpha}

\bibliography{infogeo}

\begin{thebibliography}{{Won}15}

\bibitem[And78]{ssvseq}
C.~W. Anderson.
\newblock Super-slowly varying functions in extreme value theory.
\newblock {\em J. R. Statist. Soc. \textbf{B}}, 40(2):197--202, 1978.

\bibitem[Axt01]{axtell}
R.L. Axtell.
\newblock Zipf distribution of {U.}{S.} firm sizes.
\newblock {\em Science}, 293:1818--1820, 2001.

\bibitem[BF08]{BF08}
Adrian~{D.} Banner and Daniel Fernholz.
\newblock Short-term relative arbitrage in volatility-stabilized markets.
\newblock {\em Annals of Finance}, 4(4):445--454, 2008.

\bibitem[BGT89]{BinghamBook}
N.H. Bingham, C.M. Goldie, and J.L. Teugels.
\newblock {\em Regular Variation}.
\newblock Number~1 in Encyclopedia of Mathematics and its Applications.
  Cambridge University Press, 1989.

\bibitem[BL97]{BobLed}
S.~Bobkov and M.~Ledoux.
\newblock Poincar\'e's inequalities and talagrand's concentration phenomenon
  for the exponential distribution.
\newblock {\em PTRF}, 107:383--400, 1997.

\bibitem[BW09]{BWgamma}
F.~Barthe and P.~Wolff.
\newblock Remarks on non-interacting conservative spin systems: The case of
  gamma distributions.
\newblock {\em Stochastic processes and their applications}, 119:2711--2723,
  2009.

\bibitem[CKT14]{CKT15}
C.~Cuchiero, I.~Klein, and J.~Teichmann.
\newblock A new perspective on the fundamental theorem of asset pricing for
  large financial markets, 2014.
\newblock Arxiv preprint: 1412.7562.

\bibitem[DZ98]{DZ98}
Amir Dembo and Ofer Zeitouni.
\newblock {\em Large deviations techniques and applications}, volume~38.
\newblock Springer, 1998.

\bibitem[EKS15]{EKS}
M.~Erbar, K.~Kuwada, and K.-T. Sturm.
\newblock On the equivalence of the entropic curvature-dimension condition and
  bochner's inequality on metric measure spaces.
\newblock {\em Invent. math.}, 201:993--1071, 2015.

\bibitem[Fer99]{Fer99}
E.R. Fernholz.
\newblock On the diversity of equity markets.
\newblock {\em J. Math. Econ.}, 31:393--417, 1999.

\bibitem[Fer02]{F02}
{E.}~{R.} Fernholz.
\newblock {\em Stochastic Portfolio Theory}.
\newblock Applications of Mathematics. Springer, 2002.

\bibitem[Fer15]{fernshorterm}
R~Fernholz.
\newblock An example of short-term relative arbitrage.
\newblock Preprint. ArXiv 1510.02292., 2015.

\bibitem[FK05]{FK05}
{E.}~{R.} Fernholz and {I.} Karatzas.
\newblock Relative arbitrage in volatility-stabilized markets.
\newblock {\em Annals of Finance}, 1(2):149--177, 2005.

\bibitem[FK09]{FKsurvey}
{E.}~{R.} Fernholz and {I.} Karatzas.
\newblock Stochastic portfolio theory: an overview.
\newblock In P.~G. Ciarlet, editor, {\em Handbook of Numerical Analysis},
  volume~15 of {\em Handbook of Numerical Analysis}, pages 89 -- 167. Elsevier,
  2009.

\bibitem[FKK05]{FKK05}
{E.}~{R.} Fernholz, {I.} Karatzas, and {C.} Kardaras.
\newblock Diversity and relative arbitrage in equity markets.
\newblock {\em Finance and Stochastics}, 9(1):1--27, 2005.

\bibitem[FKR15]{FKR15}
R.~Fernholz, I.~Karatzas, and J.~Ruf.
\newblock Volatility and arbitrage.
\newblock Working paper, 2015.

\bibitem[Gab09]{gabaix}
X.~Gabaix.
\newblock Power laws in economics and finance.
\newblock {\em Annu. Rev. Econ.}, 1(1):255--294, 2009.

\bibitem[Goi09]{goiathesis}
I.~Goia.
\newblock Bessel and volatility-stabilized processes., 2009.
\newblock Ph. D. thesis. Columbia University.

\bibitem[GS73]{svseq}
J.~Galambos and E.~Seneta.
\newblock Regularly varying sequences.
\newblock {\em Proceedings of the AMS}, 41(1):110--116, 1973.

\bibitem[KK94]{KK94}
Y.M. Kabanov and D.O. Kramkov.
\newblock Large financial markets: asymptotic arbitrage and contiguity.
\newblock {\em Teor. Veroyatnost. i {P}rimenen}, 39(1):222--229, 1994.

\bibitem[KK98]{KK98}
Y.M. Kabanov and D.O. Kramkov.
\newblock Asymptotic arbitrage in large financial markets.
\newblock {\em Finance Stoch.}, 2(2):143--172, 1998.

\bibitem[Kle00]{Kl00}
I.~Klein.
\newblock A fundamental theorem of asset pricing for large financial markets.
\newblock {\em Math. Finance}, 10(4):443--458, 2000.

\bibitem[KS91]{KSbook}
I.~Karatzas and Shreve S.E.
\newblock {\em Brownian motion and stochastic calculus, Second edition}, volume
  113 of {\em Graduate texts in mathematics}.
\newblock Springer-Verlag, 1991.

\bibitem[KS96]{KS96}
I.~Klein and W.~Schachermayer.
\newblock Asymptotic arbitrage in non-complete large financial markets.
\newblock {\em Teor. Veroyatnost. i {P}rimenen}, 41(4):927--934, 1996.

\bibitem[Led97]{Led}
M.~Ledoux.
\newblock Concentration of measure and logarithmic sobolev inequalities, 1997.
\newblock Available at
  \url{http://www.math.univ-toulouse.fr/~ledoux/Berlin.pdf}.

\bibitem[Pal11]{PalVSM}
Soumik Pal.
\newblock Analysis of market weights under volatility-stabilized market models.
\newblock {\em Ann. Appl. Probab.}, 21(3):1180--1213, 2011.

\bibitem[PW15]{PW14}
S.~{Pal} and T.-K.~L. {Wong}.
\newblock The geometry of relative arbitrage, 2015.
\newblock To appear in \textit{Mathematics and Financial Economics} (Springer).

\bibitem[SMS09]{zipfbook}
A.I. Saichev, Y.~Malevergne, and D.~Sornette.
\newblock {\em Theory of Zipf's Law and Beyond}.
\newblock Lecture Notes in Economics and Mathematical Systems. Springer Berlin
  Heidelberg, 2009.

\bibitem[Tal88]{Tala1}
M.~Talagrand.
\newblock An isoperimetric theorem on the cube and the {K}hintchine-{K}ahane
  inequalities.
\newblock {\em Proc. Amer. Math. Soc.}, 104:905--909, 1988.

\bibitem[Tao09]{taopost}
T.~Tao.
\newblock BenfordÕs law, zipfÕs law, and the pareto distribution, 2009.
\newblock Available at
  \texttt{terrytao.wordpress.com/2009/07/03/benfords-law-zipfs-law-and-the-pareto-distribution/}.

\bibitem[Wal96]{betaref}
Christian Walck.
\newblock {\em {Hand-book on statistical distributions for experimentalists}}.
\newblock University of Stockholm, 1996.
\newblock Available at \texttt{http://www.fysik.su.se/$\sim$walck/suf9601.pdf}.

\bibitem[{Won}15]{W14}
T.-K.~L. {Wong}.
\newblock Optimization of relative arbitrage.
\newblock {\em Annals of Finance}, 11(3):345--382, 2015.

\end{thebibliography}

\end{document}